\scrollmode \setlength{\textheight}{21.5cm}
\newtheorem{thm}{Theorem}[section]
\newtheorem{prop}[thm]{Proposition}
\newtheorem{lemma}[thm]{Lemma}
\newtheorem{cor}[thm]{Corollary}
\theoremstyle{definition}
\newtheorem{definition}[thm]{Definition}
\newtheorem{def-thm}[thm]{Definition-Theorem}
\newtheorem{rem}[thm]{Remark}
\numberwithin{equation}{section}
\def\diag{\operatorname{diag}}
\def\dual{\operatorname{dual}}
\def\ev{\operatorname{ev}}
\def\Id{\operatorname{Id}}
\def\Norm{\operatorname{Norm}}
\def\PGL{\operatorname{PGL}}
\def\red{\operatorname{red}}
\def\sgn{\operatorname{sgn}}
\def\sl{\operatorname{sl}}
\def\SL{\operatorname{SL}}
\def\th{\operatorname{th}}
\begin{document}

\title
{Cluster $\mathcal X$-varieties for dual Poisson-Lie groups II}

\author{Renaud Brahami}
\address{Section of Mathematics, University of Geneva, 2-4 rue du Li\`{e}vre, c.p. 64, 1211
Gen\`{e}ve 4, Switzerland
}
\email{Renaud.Brahami@unige.ch}

\maketitle

\begin{abstract}
In the prequel of this paper, we have associated
a family of cluster $\mathcal X$-varieties to the dual
Poisson-Lie group $G^*\subset(G,\pi_*)$ of $(G,\pi_G)$ when $(G,\pi_G)$
is a complex semi-simple Lie group of adjoint type, given with the standard Poisson
structure $\pi_G$ and $\pi_*$ is the "dual" Poisson structure
defined by the Semenov-Tian-Shansky Poisson bracket on $G$.
We describe here the cluster combinatorics involved into the Artin group action on $G^*$ given
by the De-Concini-Kac-Procesi Poisson automorphisms.
\end{abstract}

\section{Introduction}
A cluster $\mathcal X$-variety is a Poisson variety obtained by gluing a set of algebraic tori
along some specific bi-rational isomorphisms called ($\mathcal X$-)mutations, which are strongly
related \cite{FGdilog} to the mutations of the well-known cluster algebra of Fomin and Zelevinsky
introduced in \cite{FZ1}.
Each torus is given a log-canonical Poisson structure, that is a set of
coordinates $x_i$ and a skew-symmetric matrix $\widehat{\varepsilon}$, with
generic integer values, such that the equality $\{x_i,x_j\}=\widehat{\varepsilon}_{ij}x_ix_j$
is satisfied.
Because mutations are Poisson maps relative to these log-canonical Poisson
structures, cluster $\mathcal X$-varieties are naturally given a kind of
Darboux coordinates. Therefore, in the same way that the cluster algebra machinery
can be used to described coordinates ring of affine varieties related to semisimple
Lie groups \cite{BFZbruhat}, \cite{S}, we can use cluster $\mathcal X$-varieties to
study their Poisson geometry \cite{FGcluster}, \cite{GSV}, \cite{RB}.

When $G$ is a {real} split semisimple Lie group, with trivial center,
given with the Sklyanin Poisson structure associated with the standard
$r$-matrix of the Belavin-Drinfeld classification (this makes $G$ a
Poisson Lie group), Fock and Goncharov have, in the paper \cite{FGcluster}, 
constructed canonical Poisson birational maps of cluster
$\mathcal X$-varieties into $G$ (one map for each seed $\mathcal X$-torus
associated with a double reduced word of the Weyl group $W$ of $G$); this construction
provides for $G$ a natural set of rational canonical coordinates. Canonical maps
associated with different double reduced words are given by a composition of
mutations simply related to the composition of generalized $d$-moves
linking double reduced words.

Now, let us recall that any Poisson-Lie group structure on any Lie group $G$ provides
its Lie algebra $\mathfrak g$ with a
bialgebra structure, and thus its dual $\mathfrak g^{*}$ acquires a bialgebra
structure as well. Hence Poisson-Lie groups always come by pairs, and the Lie group
associated to $\mathfrak g^*$ is provided
a Poisson-Lie group structure and is called the dual of $G$. In particular, the dual of
a {complex} semisimple Poisson-Lie group $(G,\pi_G)$ of adjoint type, still equipped
with the Sklyanin Poisson structure associated with the standard
$r$-matrix, can be identified
with a subgroup in the direct product of two opposite Borel subgroups of $G$.
This group may also be mapped onto a dense open subvariety in $G$. The induced Poisson
structure $\pi_*$ then extends smoothly to the entire group $G$ and is given
by a simple explicit formula \cite{STS}. In \cite{dCKP}, De Concini, Kac and
Procesi described an Artin group action on {$G^*\subset(G,\pi_*)$} by Poisson
transformations. It turns out that this action is the semiclassical analog of the Lusztig
automorphisms on the universal quantized enveloping algebra $\mathcal{U}_q(\mathfrak{g})$
of the Lie algebra $\mathfrak{g}$.

In the prequel to this paper \cite{RB}, we have associated a family of cluster
$\mathcal X$-varieties $\{{\mathcal X}_w\mid w\in W\}$ to the dual
Poisson-Lie group~$G^*\subset(G,\pi_*)$. 
The underlying combinatorics is based on a factorization
of the Fomin-Zelevinsky twist maps into mutations and other new Poisson
birational isomorphisms on seed $\mathcal X$-tori called {tropical
mutations}, associated to an enrichment of the combinatorics on double words
of the Weyl group $W$ of~$G$. A double word $\mathbf i$ of $W$ being a word
of $W\times W$, this enrichment is in fact based on a switch $i\mapsto \overline{i}$
of the two copies of $W$, acting on the first or the last letter of a double word $\mathbf i$.
Such new moves are called $\tau$-moves, and tropical mutations are then Poisson birational
isomorphisms between the related seed $\mathcal X$-tori. Finally, twisted evaluations, strongly
influenced by the morphisms of Evens and Lu \cite{EL}, relate the cluster
$\mathcal X$-varieties ${\mathcal X}_w$ to $(G,\pi_*)$.

In the present paper, we use the previous combinatorics to get an action of the Artin group associated
to the root system of the Lie algebra $\mathfrak g$ by Poisson automorphisms on any
seed $\mathcal X$-torus of any cluster $\mathcal X$-variety $\mathcal X_w$.
Composing these automorphisms with some of the twisted evaluations of \cite{RB}, we rediscover,
as a particular case, the Artin group action on $G^*\subset(G,\pi_*)$
generated by the De-Concini-Kac-Procesi automorphisms.

Here is the organization of the paper: Section \ref{section:prelimanaries}
collects preliminaries, Section \ref{section:cluster} recalls how to
associate cluster $\mathcal X$-varieties to the Poisson manifolds $(G,\pi_G)$ and $(G,\pi_*)$, Section
\ref{section:Artin} deals with the cluster combinatorics of the Artin group action
on $G^*\subset(G,\pi_*)$ leading to the De-Concini-Kac-Procesi Poisson automorphisms,
and Section \ref{section:SL2} details our construction when $G$ is of type $A_1$.

\section{Preliminaries}
\label{section:prelimanaries}
\subsection{Lie setting}
Let $\mathfrak g$ be a complex semi-simple Lie algebra  of rank $\ell$,
$A$ its Cartan matrix, and $G$ its connected Lie group of adjoint type. Fix a
Borel subgroup $B\subset G$, let $B_-$ be the opposite Borel subgroup, $H=B\cap B_-$
the associated Cartan subgroup and $\mathfrak{h}\subset \mathfrak{g}$ the  corresponding
Cartan subalgebra. In the following, we will denote
$[1,\ell]=\{1,\dots,\ell\}$.
Let
$\alpha_1,\dots,\alpha_l$ be the simple roots of $\mathfrak g$, and let
$\omega_1, \omega_2, \ldots, \omega_l\in{\mathfrak h}^*$ be the corresponding
{fundamental weights}.
For every $i\in[1,l]$, let $(e_i,f_i,h_i)$ be the Chevalley generators
of $\mathfrak g$; they generate a Lie subalgebra $\mathfrak{g}_{\alpha_i}$
of $\mathfrak{g}$. In particular, we have $\omega_j(h_k) = \delta_{jk}$
for every $j, k \in[1,l]$. Let us recall that the \emph{weight lattice}
$\Lambda$ is the set of all weights $\gamma \in \mathfrak{h}^*$
such that $\gamma (h_i) \in \mathbb{Z}$ for all $i$.
Every weight $\gamma \in\Lambda$ gives rise to a multiplicative character
$a \mapsto a^\gamma$ of the Cartan subgroup $H$, given by
$\exp (h)^\gamma = e^{\gamma (h)}$, with $h \in \mathfrak{h}$.
We introduce a new basis on $\mathfrak h$ putting
\begin{equation}\label{equ:hrel}
h^i:=\sum{(A^{-1})}_{ij}\ h_j\ .
\end{equation}
Let  $D=\diag(d_1,\dots,d_l)$ be the diagonal matrix symmetrizing the
Cartan matrix; we put $\widehat{a}_{ij}=d_ia_{ij}=a_{ij}d_j$.
For every $x\in\mathbb C$ and $i\in[1,\ell]$, we define the group elements
$E^i=\exp(e_i)$, $F^i=\exp(f_i)$ and $H^{i}(x)=\exp(\log(x)h^{i})$ related
to the generators $e_i$, $f_i$ and $h^i$ of $\mathfrak{g}$.
Because of the relation (\ref{equ:hrel}), the
canonical inclusions $\varphi_i: SL(2,\mathbb C)\hookrightarrow G$ satisfy
$$
\begin{array}{cc}
\varphi_i\left(
\begin{array}{cc}
1 & x\\
0 & 1
\end{array}
\right) =H^i(x)E^iH^i(x^{-1}),
& \varphi_i\left(
\begin{array}{cc}
1 & 0\\
x & 1
\end{array}
\right) =H^i(x^{-1})F^iH^i(x)\ .
\end{array}
$$

We denote by $W$ the Weyl group of $G$.
As an abstract group, $W$ is a finite Coxeter group of rank $\ell$ generated by
the set of \emph{simple reflections} $S=\{s_1, \dots, s_\ell\}$; it acts on
$\mathfrak h^*$, $\mathfrak h$ and the Cartan subgroup $H$ by
\begin{equation}\label{equ:actionW}
\begin{array}{ccccc}
s_i (\gamma) = \gamma - \gamma (\alpha_i^\vee) \alpha_i&,&
s_i (h) = h - \alpha_i (h) \alpha_i^\vee&\mbox{and}&
a^{w(\gamma)}=(\widehat w^{-1}a\widehat{w})^{\gamma}
\end{array}
\end{equation}
for every $\gamma \in \mathfrak h^*$, $h \in \mathfrak h$,
$w\in W$ and $a\in H$.
Recall now that a \emph{reduced word}
for $w\in W$ is an expression for $w$ in the generators belonging to $S$, which is
minimal in length among all such expressions for $w$. Let us denote $\ell(w)$
this minimal length and $R(w)$ the set of reduced words associated to $w$. As
usual, the notation $w_0$ will refer to the longest element of $W$. Let us also recall
that $W$ can also be seen as the subgroup $\Norm_G(H)/H$ of $G$.
Thus, to every simple reflection
$s_i\in W$ we associate the representative
$$\widehat{s_i}=\phi_i\left(
\begin{array}{rr}
0&-1\\
1&0
\end{array}
\right)\ .
$$
We can choose  representatives in $G$ for every element of $W$ by setting
$\widehat{w_1w_2}=\widehat{w_1}\widehat{w_2}$ for every $w_1,w_2\in W$
as long as $\ell(w_1)+\ell(w_2)=\ell(w_1w_2)$.

\subsection{Poisson setting}
Let us  recall that  the so-called \emph{standard classical $r$-matrix} is given by
$r=\sum e_{\alpha}\wedge f_{\alpha}\in\mathfrak{g}\wedge\mathfrak{g}$ {
(the summation is done {over all} positive roots $\alpha$)}.
Let $\langle\ ,\ \rangle$ be the Killing form on $\mathfrak{g}$.
For every $x\in\mathfrak{g}$, $X\in G$ and $f\in{\mathcal F}(G)$
the left and right gradients are defined respectively by
$$
\begin{array}{ccc}
\langle \nabla f(X), x\rangle =\frac{d}{dt}\mid_{t=0}f(e^{tx}X)& \text{and}&
\langle \nabla 'f(X), x\rangle =\frac{d}{dt}\mid_{t=0}f(Xe^{tx})\ .
\end{array}$$
If $f,g\in{\mathcal F}(G)$, let $\pi_G$ be the following Poisson structure
on $G$ given by the Sklyanin bracket which transforms $G$ into a Poisson-Lie group.
$$\{f,g\}_G=\frac{1}{2}(\langle \nabla f\otimes\nabla g,r\rangle
-\langle \nabla ' f\otimes\nabla ' g,r\rangle )\ .$$
Denote $r_{\pm}=r\pm t$, where $t$ is the Casimir element of $\mathfrak{g}$:
$$t=\displaystyle\frac{1}{2}\sum_{i=1}^{\ell}(h_i\otimes h^i+e_i\otimes f_i+f_i\otimes e_i)
\in{\mathfrak g}\otimes{\mathfrak g}\ .$$

\begin{prop}[\cite{STS}]\label{prop:STSPoisson} Let us equip $G$ with the Poisson structure $\pi_*$ given by $$
\{f,g\}_*=\frac{1}{2}(\langle \nabla f\otimes\nabla g,r\rangle
+\langle \nabla ' f\otimes\nabla ' g,r\rangle )
-\langle \nabla f\otimes\nabla' g,r_+\rangle -\langle \nabla ' f\otimes\nabla g,r_-\rangle\ .
$$
There exists a canonical map of the dual group $G^*$ onto the dense open cell
$BB_-\subset G$ which is a covering of Poisson manifolds.
\end{prop}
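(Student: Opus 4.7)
The plan is to realise $G^*$ explicitly as a subgroup of $B\times B_-$,
to define the canonical map $\mu:G^*\to G$ by a concrete product of
factors, and then to check in turn surjectivity onto $BB_-$, finiteness
of the fibres, and the Poisson property.

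Using the standard Manin triple
$(\mathfrak g\oplus\mathfrak g,\mathfrak g_\Delta,\mathfrak g^*)$
attached to the $r$-matrix $r$, one identifies the Lie algebra
$\mathfrak g^*$ with the subspace of pairs
$(x,y)\in\mathfrak b\oplus\mathfrak b_-$ whose projections onto
$\mathfrak h$ are negatives of each other. Exponentiating produces the
connected subgroup $G^*\subset B\times B_-$ whose elements factor
uniquely as $(hu,h^{-1}v)$ with $h\in H$, and $u$, $v$ in the unipotent
radicals $N$, $N_-$ of $B$, $B_-$ respectively. I would then set
$\mu(b_+,b_-):=b_+^{-1}b_-$, for which the direct computation
$\mu(hu,h^{-1}v)=u^{-1}h^{-2}v\in NHN_-\subseteq BB_-$
shows that the image sits inside the big cell. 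Conversely, every
$g\in BB_-$ has a unique Gauss decomposition $g=u_0 h_0 v_0$, and its
preimages under $\mu$ are the $(hu_0^{-1},h^{-1}v_0)$ with
$h^2=h_0^{-1}$. Since $G$ is of adjoint type, $H$ is an algebraic torus
of rank $\ell$, and the squaring map on $H$ is a finite covering whose
kernel is the $2$-torsion subgroup, of order $2^\ell$. Hence $\mu$ is a
$2^\ell$-to-$1$ surjective local diffeomorphism onto $BB_-$.

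It remains to verify that $\mu$ is a morphism of Poisson manifolds when
$G^*$ carries its own Poisson-Lie bracket dual to $\pi_G$ and $G$ is
endowed with $\pi_*$. The natural framework is the Drinfeld double
$D=G\times G$ of the Manin triple, equipped with the quadratic form
$\langle\ ,\ \rangle\oplus(-\langle\ ,\ \rangle)$: on $D$ there is a
canonical Poisson bracket for which $G_\Delta$ and $G^*$ form a dual
pair of Poisson-Lie subgroups, and $\mu$ is obtained by composing the
inclusion $G^*\hookrightarrow D$ with the projection
$D\to D/G_\Delta\simeq G$, well defined on the open set where the
Iwasawa-like factorization $D=G^*\cdot G_\Delta$ holds. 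Pulling back a
function $f\in\mathcal F(G)$ through $\mu$ converts its left and right
gradients into gradients along the two factors of $G^*$, and the four
summands of $\pi_*$, separated by $r_\pm=r\pm t$, match the four
$r$-matrix terms of the Sklyanin bracket of $G^*$. This last matching
is the real obstacle: performing it by hand requires tracking the
adjoint actions generated when a Cartan factor crosses a unipotent one,
and keeping careful count of the signs distinguishing $r$ from $r_\pm$;
the vanishing of the Casimir $t$ against the graph of the Manin triple
is precisely what cancels the spurious terms and collapses the two
expressions onto the same bivector.
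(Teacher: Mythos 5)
The paper offers no proof of this proposition to compare against: it is quoted verbatim from [STS] (Semenov-Tian-Shansky), so the only meaningful standard is the literature route, which your sketch indeed follows --- the Manin triple $(\mathfrak g\oplus\mathfrak g,\mathfrak g_\Delta,\mathfrak g^*)$, the realization of $G^*$ inside $B\times B_-$, and a factorization map onto the big cell. This is also the realization the paper itself relies on later (Section 4, where $(G^*,\pi_{G^*})\simeq(G^0,\pi_*)$ is implemented by $(n_+t,n_-t^{-1})\mapsto n_+t^2n_-^{-1}$). Your set-theoretic and topological work is correct and complete: the description of $G^*$ as pairs $(hu,h^{-1}v)$, the computation $\mu(hu,h^{-1}v)=u^{-1}h^{-2}v\in NHN_-=BB_-$, the identification of the fibres with the square roots of a fixed Cartan element, and the conclusion that $\mu$ is a surjective $2^\ell$-to-$1$ local diffeomorphism, hence a covering (it is the quotient by the free action of the $2$-torsion of $H$ given by $(b_+,b_-)\mapsto(\epsilon b_+,\epsilon^{-1}b_-)$).

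The genuine gap is the Poisson property, which is the actual content of the proposition: your last paragraph describes the verification instead of performing it. You yourself call the matching of the four terms of $\{\ ,\ \}_*$ with the bracket of $G^*$ ``the real obstacle,'' and then dispose of it with a qualitative remark about adjoint actions and the Casimir; nothing is computed, so nothing is proved. The gap has a concrete consequence: you chose $\mu(b_+,b_-)=b_+^{-1}b_-$, whereas the standard dressing map (and the one used in Section 4 of the paper) is $(b_+,b_-)\mapsto b_+b_-^{-1}$. These two differ by composition with the group inversion of $G^*$, which is anti-Poisson on any Poisson-Lie group; consequently at most one of the two maps can be Poisson for the bracket $\{\ ,\ \}_*$ exactly as displayed (the other one intertwines $\pi_{G^*}$ with $-\pi_*$), and only the computation you skipped can decide which. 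To close the argument you must either push $\pi_{G^*}$ forward through $\mu$ in coordinates adapted to the Gauss decomposition and compare term by term with the displayed formula --- in particular reproducing the asymmetric placement of $r_+$ and $r_-$ in the mixed terms --- or prove that the quotient Poisson structure on $D/G_\Delta\simeq G$ induced from the double is precisely that formula and that your $\mu$, rather than its inversion-twisted variant, is the Poisson one.
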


To any $u,v\in W$ we associate the \emph{double Bruhat cell}
$G^{u,v}=B\widehat{u}B\cap B_-\widehat{v}B_-$; we have
$$G=\displaystyle\bigcup_{u,v\in W}G^{u,v}\ .$$

\begin{prop}[\cite{KZ}]For every $u,v\in W$, the double Bruhat cells $G^{u,v}$
are the $H$-orbits, by the right (or left) action, of the symplectic leaves
of $(G,\pi_G)$.
\end{prop}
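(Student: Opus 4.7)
The plan is to break the claim into three pieces and handle them in turn: (a) each cell $G^{u,v}$ is stable under the $H$-actions by left and right multiplication, (b) $H$ acts on $(G,\pi_G)$ by Poisson automorphisms, and (c) each $G^{u,v}$ is itself a Poisson subvariety of $G$ whose generic symplectic leaf has complementary dimension exactly that of the generic $H$-orbit inside $G^{u,v}$, so that $G^{u,v}$ is a single $H$-orbit of a leaf.

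For (a), since $B=HN$ and $B_-=HN_-$ with unipotent radicals $N,N_-$, one has $HB=B=BH$ and similarly for $B_-$; hence $H\cdot B\widehat{u}B = B\widehat{u}B$ and $H\cdot B_-\widehat{v}B_- = B_-\widehat{v}B_-$, so $H\cdot G^{u,v}=G^{u,v}$, and likewise for the right action. For (b), the standard $r$-matrix $r=\sum_{\alpha>0}e_\alpha\wedge f_\alpha$ has no $\mathfrak{h}\otimes\mathfrak{h}$ component, so inspection of the Sklyanin formula shows that $H$ is a Poisson-Lie subgroup carrying the trivial Poisson structure; consequently left and right translations by elements of $H$ are Poisson automorphisms of $(G,\pi_G)$, permuting the symplectic leaves. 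The same feature $r\in\mathfrak{b}\wedge\mathfrak{b}_-$, together with $B$ and $B_-$ being Poisson-Lie subgroups, shows that each Bruhat double coset $B\widehat{u}B$ (resp.~$B_-\widehat{v}B_-$) is a Poisson subvariety of $G$, and hence so is their intersection $G^{u,v}$.

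The main obstacle is (c), the dimension count. One knows $\dim G^{u,v} = \ell + \ell(u) + \ell(v)$ from the Bruhat decomposition, and one must evaluate both the generic rank of $\pi_G|_{G^{u,v}}$ (giving the dimension of the symplectic leaves it contains) and the dimension of a generic $H$-orbit, and check that together they fill $G^{u,v}$. The concrete way to carry this out is via Lusztig-type factorization coordinates on $G^{u,v}$ attached to a double reduced word for $(u,v)$---precisely the coordinates used in the prequel to construct the cluster $\mathcal{X}$-structure---in which the bivector takes a log-canonical form whose rank reads off combinatorially from the exchange matrix, while the $H$-actions are diagonal and their orbit dimensions are immediate. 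A more conceptual alternative is to invoke the dressing action of $G^*$ on $(G,\pi_G)$, whose connected orbits are exactly the symplectic leaves, and to verify directly that the $H$-orbit of a single dressing orbit through a point of $G^{u,v}$ exhausts $G^{u,v}$, by comparing strata under the Bruhat decomposition on both sides.
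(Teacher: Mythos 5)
First, a point of context: the paper itself gives no proof of this proposition --- it is quoted from \cite{KZ} --- so your attempt has to be measured against the Kogan--Zelevinsky argument. Your steps (a) and (b) are sound in substance: $H$-stability of the cells is immediate, and translations by $a\in H$ are indeed Poisson automorphisms of $(G,\pi_G)$, because $\pi_G$ vanishes identically along $H$ (each summand $e_\alpha\wedge f_\alpha$ of $r$ is $\operatorname{Ad}_H$-invariant, so the left and right translates of $r$ agree at points of $H$) and multiplicativity then gives $\pi_G(ag)=L_{a*}\pi_G(g)$, $\pi_G(ga)=R_{a*}\pi_G(g)$. However, your justification that $B\widehat{u}B$ and $B_-\widehat{v}B_-$ are Poisson subvarieties is not yet an argument: ``$B$, $B_-$ are Poisson--Lie subgroups'' does not formally yield invariance of their \emph{double cosets} under Hamiltonian flows. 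The argument that does work is the dressing-field computation: writing $\xi^L,\xi^R\in\mathfrak{g}^*$ for the left and right translates of $\xi\in T_g^*G$ and using the $\operatorname{Ad}$-invariance of the Casimir $t$, one gets $\pi_G^{\#}\xi=L_{g*}\,r_+^{\#}(\xi^L)-R_{g*}\,r_+^{\#}(\xi^R)=L_{g*}\,r_-^{\#}(\xi^L)-R_{g*}\,r_-^{\#}(\xi^R)$, and since $r_+^{\#}$ and $r_-^{\#}$ take values in the two Borel subalgebras, every Hamiltonian vector field is tangent to both $BgB$ and $B_-gB_-$; hence every leaf lies in a single double Bruhat cell.

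The genuine gap is your step (c), which is exactly the content of the theorem and which you do not carry out: both of your proposed routes are left as plans. Moreover, the reduction you formulate --- a \emph{generic} rank count plus a \emph{generic} $H$-orbit count --- cannot prove the statement as written, because the proposition concerns \emph{every} symplectic leaf. From generic information you would only conclude that $H\cdot S$ is open in $G^{u,v}$ for leaves $S$ through generic points; nothing would exclude smaller leaves inside a proper closed $H$-stable subset, whose $H$-saturations would then be proper subsets of the cell. What \cite{KZ} actually establishes, and what any proof along your lines needs, is: (i) that $\pi_G$ has \emph{constant} rank along all of $G^{u,v}$ (regularity of the cell, computed there explicitly in terms of $u$ and $v$); (ii) explicit Casimir functions on the cell, built from generalized minors, cutting out the leaves; (iii) that right $H$-translation acts on these Casimirs by nontrivial characters, so that $H$ is locally transitive on the leaf space; and (iv) connectedness (irreducibility) of $G^{u,v}$, which you use tacitly in the phrase ``so that $G^{u,v}$ is a single $H$-orbit of a leaf'' --- the $H$-saturations of leaves partition the cell, and only when each is open and the cell is connected can one conclude they all coincide. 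Note finally that your route via log-canonical cluster coordinates (in effect, the chart of Theorem \ref{thm:evG}) covers only a Zariski-open subset of $G^{u,v}$, so by itself it reproduces exactly the generic statement and hence the same gap.
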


Now, following \cite{EL}, let us now give a Poisson stratification for $(G,\pi_*)$.
Two elements $g_1, g_2 \in G$ are said to be in the same \emph{Steinberg fiber} if
$f(g_1) = f(g_2)$ for every regular function $f$ on $G$ that is invariant under
conjugation. For $t \in H$, let $F_t$ be the Steinberg fiber containing $t$.
By the Jordan decomposition of elements in $G$, every Steinberg fiber is of the
form $F_t$ for some $t \in H$. Moreover, the equality $F_{t'} = F_t$ is satisfied
if and only if there exists $w\in W$ such that $t'=w(t)$, where $W$ acts on $H$ by
the formula (\ref{equ:actionW}).
The group $G$ has therefore the decompositions
\begin{equation}\label{equ:decG^*}
\begin{array}{cccc}
G=\displaystyle\bigcup_{t\in H,w\in W}F_{t,w}
=\displaystyle\bigsqcup_{t\in H\backslash W,w\in W}F_{t,w}
&&\mbox{where}&F_{t,w}:=B\widehat{w}B_-\bigcap F_t\ .
\end{array}
\end{equation}

\begin{prop}\cite[Proposition 3.3]{EL} Every $F_{t,v}$ is a finite union of
$H$-orbits, {with respect to} the conjugation action, of the symplectic leaves
of $(G,\pi_*)$.
\end{prop}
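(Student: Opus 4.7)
The plan is to obtain the stratification in three stages: first, identify Casimirs for $\pi_*$ that force each symplectic leaf to lie inside a single Steinberg fiber $F_t$; second, show that each Bruhat stratum $B\widehat{w}B_-$ is $H$-Poisson saturated in $(G,\pi_*)$; third, deduce finiteness by a dimension count. Combined with the stratification $G=\bigsqcup_w B\widehat{w}B_-\cap F_t$ from (\ref{equ:decG^*}), these three ingredients give the proposition on the nose.

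For the first step, I would check that every regular conjugation-invariant function $f\in\mathbb{C}[G]^G$ is a Casimir for $\pi_*$. Such an $f$ satisfies $\nabla'f(X)=\operatorname{Ad}_{X^{-1}}\nabla f(X)$; plugging this into the formula of Proposition~\ref{prop:STSPoisson} and using that $\operatorname{Ad}$ fixes the Casimir $t$ while $r_{\pm}=r\pm t$, the four contributions to $\{f,g\}_*$ should collapse pairwise to zero. Since the Steinberg fibers are the common level sets of $\mathbb{C}[G]^G$, every symplectic leaf of $\pi_*$ lies in a unique $F_t$. The same type of computation, applied to the pull-back $f\circ c_h$ under conjugation $c_h$ by $h\in H$, shows that $H$-conjugation is a Poisson action on $(G,\pi_*)$: this relies on the $\operatorname{Ad}_H$-invariance of $r$ and $t$ together with $\nabla(f\circ c_h)=\operatorname{Ad}_h\nabla f$ and $\nabla'(f\circ c_h)=\operatorname{Ad}_h\nabla'f$. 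Hence $H$ permutes the symplectic leaves of $\pi_*$ and preserves each $F_{t,w}$.

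The main obstacle is the second step: showing that each Bruhat stratum $B\widehat{w}B_-$ decomposes, after $H$-saturation, into symplectic leaves of $\pi_*$. By analogy with the statement for $(G,\pi_G)$, I would describe the Bruhat stratification via the vanishing of generalized minors $\Delta_{u\omega_i,v\omega_i}$ and compute $\{\Delta,\Delta'\}_*$ using Proposition~\ref{prop:STSPoisson}, verifying that the defining ideal of each stratum is stable under $\pi_*$; modulo the $H$-factor arising from the right hand side of $\{x_i,x_j\}_*=\widehat{\varepsilon}_{ij}x_ix_j$-type identities, this forces each stratum to be a union of leaves. A cleaner alternative is to use the covering $G^*\to BB_-$ of Proposition~\ref{prop:STSPoisson} to transport the question to the dual group, where the pull-back of the Bruhat decomposition is visibly Poisson.

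Once both compatibilities are established, finiteness is routine: $F_{t,w}$ is an algebraic variety, the $H$-orbits through symplectic leaves of $\pi_*$ are locally closed of fixed dimension depending on $w$ and on the $W$-class of $t$, and Noetherian dimension counting shows that only finitely many such orbits fit inside $F_{t,w}$. This last step is the easiest; the bulk of the work is the Poisson compatibility of the Bruhat stratification with $\pi_*$.
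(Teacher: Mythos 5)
Two preliminary remarks. First, the paper itself does not prove this proposition: it is quoted as a preliminary, imported verbatim from \cite[Proposition 3.3]{EL}, so the only argument yours can be compared with is Evens--Lu's. Second, the first two thirds of your plan are sound in outline: that conjugation-invariant functions are Casimirs of $\pi_*$ (hence every leaf lies in a single Steinberg fiber $F_t$) and that $H$ acts on $(G,\pi_*)$ by Poisson automorphisms (hence permutes leaves and preserves each $F_{t,w}$) are correct, standard facts going back to \cite{STS}, and they are consistent with the explicit $\PGL(2,\mathbb C)$ brackets of Section~\ref{section:SL2}; your first route to the Bruhat compatibility --- showing that the ideal of each $\overline{B\widehat{w}B_-}$ is a Poisson ideal for $\pi_*$ --- is also workable. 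Your ``cleaner alternative'', however, is not: the covering $G^*\to BB_-$ of Proposition~\ref{prop:STSPoisson} reaches only the open cell, while the strata $B\widehat{w}B_-$ with $w\neq e$ lie in $G\setminus BB_-$ and are invisible to $G^*$. Knowing that the open cell carries the restricted Poisson structure says nothing about whether a leaf through a point of a \emph{lower} stratum stays inside that stratum, which is the whole issue.

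The genuine gap is your third step, which you call routine and which is in fact the heart of the proof in \cite{EL}. ``Noetherian dimension counting'' cannot yield finiteness: a variety of dimension $d$ is typically partitioned into a \emph{continuum} of pairwise disjoint locally closed subsets of constant smaller dimension, and that is exactly what happens here. In the paper's own example (Section~\ref{section:SL2}), every bracket $\{t_{ij},t_{kl}\}_*$ listed there is divisible by $t_{22}$, so $\pi_*$ vanishes identically on the stratum $B\widehat{s_1}B_-=\{t_{22}=0\}$; consequently every point of $F_{t,s_1}$ is a zero-dimensional leaf, and $F_{t,s_1}$ is an \emph{infinite} union of leaves, all locally closed of the same dimension. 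Your argument, applied verbatim to the leaves themselves, would therefore ``prove'' the false statement that $F_{t,s_1}$ is a finite union of leaves. What saves the proposition is that the $H$-conjugation orbit of each such point leaf is \emph{open} in $F_{t,s_1}$, and that is the statement you must establish in general: for every $g\in F_{t,w}$, the tangent space to the leaf through $g$ plus the tangent space to $H\cdot g$ equals $T_gF_{t,w}$, i.e.\ each $H$-orbit of a leaf is open in $F_{t,w}$. Finiteness then follows because an algebraic variety has finitely many connected components and the sets $H\cdot L$ partition $F_{t,w}$ into open pieces. This openness is a pointwise rank computation for $\pi_*$ along each stratum --- carried out in \cite{EL} via their description of $\pi_*$ in terms of the double and the Grothendieck resolution --- and it is precisely the content that a dimension count cannot replace.
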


\subsection{Cluster $\mathcal X$-variety setting} We recall here the definitions
introduced by Fock and Goncharov underlying cluster $\mathcal X$-varieties
(see, for example, \cite{FGdilog} for more details).
A \emph{seed} ${\mathbf I}$ is a quadruple $(I, I_0, \varepsilon, d)$ where
\begin{itemize}
\item $I$ is a finite set;

\item $I_0\subset I$;

\item $\varepsilon$ is a matrix $\varepsilon_{ij}$, $i,j\in I$, such
that $\varepsilon_{ij}\in \mathbb{Z}$, unless $i,j\in I_0$;

\item $d=\{d_i\}$, $i \in I$, is a subset of positive integers such that
the matrix $\widehat{\varepsilon}_{ij}=\varepsilon_{ij}d_j$ is skew-symmetric.
\end{itemize}
For every real number $x\in\mathbb R$, let us denote
$[x]_+= \max(x,0)$ and
$$
\sgn(x)=
\left\{\begin{array}{cc}
-1 & \text{if $x<0$ ;}\\
0  & \text{if $x=0$ ;}\\
 1 & \text{if $x>0$ .}
\end{array}
\right.
$$
Let $\mathbf{I}=(I,I_0,\varepsilon,d)$, $\mathbf {I'}=(I',{I'}_0,{\varepsilon}',d')$
be two seeds, and $k\in I\backslash I_0$. A $\emph{mutation in the direction k}$ is a map
$\mu_k:I\longrightarrow I'$ satisfying the following conditions:
\begin{itemize}
\item
$\mu_k(I_0)={I'}_0$;
\item
${d'}_{\mu_k(i)}=d_i$;
\item
${\varepsilon^{'}}_{\mu_k(i)\mu_k(j)}=\left\{ \begin{array}{lll}
-\varepsilon_{ij}&   \mbox{if}\ i=k\ \mbox{or}\ j=k\ ;\\
\varepsilon_{ij}+\sgn(\varepsilon_{ik})[\varepsilon_{ik}\varepsilon_{kj}]_+& \mbox{if}\  i,j\neq k\ .
\end{array}\right.$
\end{itemize}
A $\emph{symmetry}$ on a seed $\mathbf{I}=(I,I_0,\varepsilon,d)$ is an
automorphism $\sigma$ of the set $I$ preserving the subset $I_0$, the matrix
$\varepsilon$, and the numbers $d_i$. That is to say:
\begin{itemize}
\item
$\sigma (I_0)=I_0$;
\item
$d_{\sigma (i)}=d_i$;
\item
${\varepsilon}_{\sigma (i)\sigma (j)}={\varepsilon}_{ij}.$
\end{itemize}
Let $|I|$ be the cardinal of every finite set $I$ and
${\mathbb C}_{\neq 0}$ be the set of non-zero complex numbers.
The \emph{seed} $\mathcal X$-\emph{torus}
$\mathcal{X}_{\mathbf I}$ associated
to a seed $\mathbf I$ is the torus $(\mathbb C_{\neq 0})^{| I|}$ with
the Poisson bracket
$$\{x_i,x_j\}=\widehat{\varepsilon}_{ij}x_ix_j\ ,$$
where $\{x_i| i\in I\}$ are the standard coordinates on the factors.
Symmetries and mutations on seeds induce involutive maps between the
corresponding seed $\mathcal X$-tori, which are denoted by the same symbols
${\mu}_k$ and $\sigma$, and given by
\begin{itemize}
\item $
\sigma^*x_{\sigma(i)}=x_i$
\item
$\mu_k^*x_{\mu_k(i)}=\left\{ \begin{array}{lll}
{x_k}^{-1}&  \mbox{if}\ i=k\ ;\\
x_ix_k^{[\varepsilon_{ik}]_+}(1+x_k)^{-\varepsilon_{ik}}& \mbox{if}\ i\not=k\ .
\end{array}\right.
$
\end{itemize}
Finally, a \emph{cluster transformation} linking two seeds
(and two seed $\mathcal X$-tori) is a composition of symmetries and mutations, and the
$\emph{cluster}$ ${\mathcal X}$-$\emph{variety}$ ${\mathcal X}_{|\mathbf I|}$ associated
to a seed $\mathbf I$
is obtained by taking every seed $\mathcal X$-tori obtained from
${\mathcal X}_{\mathbf I}$ by cluster transformations, and gluing them via
the previous bi-rational isomorphisms.

\section{{Cluster $\mathcal X$-varieties} related to $(G,\pi_G)$ and $(G,\pi_*)$}
\label{section:cluster}
\subsection{{Cluster $\mathcal X$-varieties} related to $(G,\pi_G)$}
\label{section:clusterpiG}
We briefly recall the construction of \cite{FGcluster}.
A (reduced) word of $W\times W$ is called a $\emph{double (reduced) word}$.
To avoid confusions, we denote ${\overline 1},\dots,{\overline \ell}$ the
indices of the reflections associated to the first copy of the Weyl group $W$ and
$1,\dots,\ell$ the indices of the reflections associated to the second copy of $W$.
A double reduced word of $(u,v)$ is a then shuffle of a reduced word of $u$,
written in the alphabet $[{\overline 1},{\overline \ell}]$,
and of a reduced word of $v$, written in the alphabet $[1,\ell]$.
We denote $R(u,v)$ the set of double reduced words of $(u,v)$, and $\mathbf 1\in R(1,1)$
the unity of $W\times W$.

For every $u,v\in W$, ${\mathbf i}\in R(u,v)$ and $j\in[1,\ell]$,
let $N^{j}({\mathbf i})$ be the number of times the letter $j$ or
$\overline j$ appear in $\mathbf i$. {Let $I(\mathbf i)$ (resp. $I_0^{\mathfrak{R}}(\mathbf i)$
and $I_0(\mathbf i)$) be the set of all ordered pairs $\binom{j}{k}$ such that
$j\in[1,\ell]$, and $0\leq k \leq N^{j}({\mathbf i})$ (resp. $k=N^{j}({\mathbf i})$
and $k\in\{0,N^{j}({\mathbf i})\}$).
Let us start to describe the trivial seed $(I(\mathbf 1),I_0(\mathbf 1),\varepsilon({\mathbf 1}),
d({\mathbf 1}))$. It is defined by the zero square matrix $\varepsilon({\mathbf 1})$ of size $\ell$ and
$\ell$-vectors $d(\mathbf 1)_{\binom j{0}}={d_j}$.

Now, let us describe the elementary
seeds $(I(i),I_0(i),\varepsilon(i),d(i))$ and $(I(\overline{i}),I_0(\overline{i}),
\varepsilon(\overline{i}),d(\overline{i}))$ for every $i\in[1,\ell]$ and $\mathbf i\in\{i,\overline i\}$. Let
$\varepsilon(i)$, $\varepsilon(\overline i)$ be the square matrices of size $\ell+1$, with entries labeled
by the elements of $I(\mathbf i)$ and given by
\begin{equation}\label{equ:exeps}
\begin{array}{cccccc}
\varepsilon(i)_{\binom i{1}\binom j{0}}=\displaystyle\frac{a_{ij}}{2}
=-\varepsilon(i)_{\binom i{0}\binom j{0}},
&&
\varepsilon(\overline i)_{\binom i{1}\binom j{0}}=-\displaystyle\frac{a_{ij}}{2}
=-\varepsilon(\overline i)_{\binom i{0}\binom j{0}},
\end{array}
\end{equation}
and zero otherwise.
In a similar way, let $d(i)$ and $d(\bar{i})$ be the $(\ell+1)$-vectors with components
labeled by the elements of $I(\mathbf{i})$,
$d(i)_{\binom j{k}}={d_j}=d(\overline i)_{\binom j{k}}$.

For every $\mathbf i\in\{\mathbf 1,i,\overline i\}$ we then
denote ${\mathcal X}_{\mathbf i}$
the seed $\mathcal X$-torus associated to the seed
$(I(\mathbf i),I_0(\mathbf i),\varepsilon({\mathbf i}),d({\mathbf i}))$ and
$\ev_{\mathbf i}:{\mathcal X}_{\mathbf i}\rightarrow G$ the related
evaluation map

\begin{align*}
\ev_{\mathbf 1}&\colon \mathbb C_{\neq 0}^{\ell} \longrightarrow G\hspace{-1,5cm}&&\colon\hspace{-1,5cm}&
\left(x_{\binom{1}{0}},\dots,x_{\binom{i}{0}},\dots,x_{\binom{l}{0}}\right)& \longmapsto \displaystyle\prod_{j}H^j(x_{\binom{j}{0}}) ,\\
\ev_{i}&\colon \mathbb C_{\neq 0}^{\ell+1} \longrightarrow G\hspace{+1,5cm}&&\colon\hspace{-1,5cm}&
(x_{\binom{1}{0}},\dots,x_{\binom{i}{0}},x_{\binom{i}{1}},x_{\binom{i+1}{0}},\dots,x_{\binom{l}{0}}) & \longmapsto \displaystyle\prod_{j}H^j(x_{\binom{j}{0}})E^iH^i(x_{\binom{i}{1}})\ ,\\
\ev_{\overline{i}}&\colon \mathbb C_{\neq 0}^{\ell+1} \longrightarrow G\hspace{-0cm}&&\colon\hspace{-1,5cm} &
(x_{\binom{1}{0}},\dots,x_{\binom{i}{0}},x_{\binom{i}{1}},x_{\binom{i+1}{0}},\dots,x_{\binom{l}{0}}) & \longmapsto \displaystyle\prod_{j}H^j(x_{\binom{j}{0}})F^iH^i(x_{\binom{i}{1}})\ .
\end{align*}

To associate seeds to longer words we proceed inductively. We associate
to a product $\mathbf i\mathbf j$ of double words $\mathbf i$ and $\mathbf j$
an \emph{amalgamated seed}
$(I(\mathbf i\mathbf j),I_0(\mathbf i\mathbf j),\varepsilon({\mathbf i
\mathbf j}),d({\mathbf i\mathbf j}))$
in the following way. The elements of the set $d({\mathbf i\mathbf j})$ are
{defined by $d({\mathbf i\mathbf j})_{\binom j{k}}={d_j}$},
and the matrix $\varepsilon({\mathbf i\mathbf j})$ is given by

\begin{equation}\label{equ:amaleps}
\varepsilon({\mathbf i\mathbf j})_{\binom i{k}\binom j{l}}=\left\{
\begin{array}{llll}
{\varepsilon}({\mathbf{i}})_{\binom i{k}\binom j{l}}
&\mbox{if}\ k<N^i(\mathbf{i})\mbox{ and }l<N^j(\mathbf{i});\\
{\varepsilon}({\mathbf{i}})_{\binom i{k}\binom j{l}}
+ {\varepsilon}({\mathbf{j}})_{\binom i{0}\binom j{0}}
&\mbox{if}\ k=N^i(\mathbf{i})\mbox{ and }l=N^j(\mathbf{i});\\
{\varepsilon}({\mathbf{j}})_{\binom i{k-N^i(\mathbf{i})}
\binom j{l-N^j(\mathbf{i})}}&\mbox{if}\ k>N^i(\mathbf{i})\mbox{ and }l>N^j(\mathbf{i});\\
0&\mbox{otherwise}.
\end{array}
\right.
\end{equation}
This operation induces
a homomorphism $\mathfrak{m}:{\mathcal X}_{\mathbf i}\times{\mathcal X}_{\mathbf j}
\rightarrow{\mathcal X}_{\mathbf i\mathbf j}$ between the corresponding seed $\mathcal X$-tori
called \emph{amalgamation} and given by
$$
\begin{array}{ll}
\mathfrak{m}^*{z}_{\binom{i}{k}}  =  \left\{
\begin{array}{lll}
x_{\binom{i}{k}}&\mbox{if}\ 0\leq k<N^i(\mathbf{i});\\
x_{\binom{i}{k}}y_{\binom{i}{0}}&\mbox{if}\ k=N^i(\mathbf{i});\\
y_{\binom{i}{k-N^i(\mathbf i)}}&\mbox{if}\ N^i(\mathbf{i})<k\leq N^i(\mathbf{i})+N^i(\mathbf{j}),
\end{array}
\right.\\
\end{array}
$$
where $x_i$, $y_j$ and $z_k$ denote respectively the coordinates functions on
${\mathcal X}_{\mathbf i}$, ${\mathcal X}_{\mathbf j}$, and
${\mathcal X}_{\mathbf i\mathbf j}$. Notice that this amalgamated
product is associative. Now, let $\mathbf i=i_1\dots i_k$ be a double word,
${\mathcal X}_{\mathbf i}$ be the seed $\mathcal X$-torus given by the associated amalgamation
$\mathfrak{m}:{\mathcal X}_{{i_1}}\times \dots\times{\mathcal X}_{{i_k}}\rightarrow
{\mathcal X}_{\mathbf i}$, and $\mathbf z\in{\mathcal X}_{\mathbf i}$ be an amalgamation
$\mathfrak{m}(\mathbf{x_1},\dots,\mathbf{x_k})$ of $\mathbf{x_1}\in{\mathcal X}_{{i_1}}$,
..., $\mathbf{x_k}\in{\mathcal X}_{{i_k}}$.
We define the evaluation map
$$\ev_{\mathbf i}:{\mathcal X}_{\mathbf i}\rightarrow G:
\mathbf z\mapsto \ev_{i_1}(\mathbf{x_1})\dots \ev_{i_k}(\mathbf{x_k})\ .$$

\begin{thm}[\cite{FGcluster}]\label{thm:ev}\label{thm:evG} For any $u,v\in W$ and $\mathbf i\in R(u,v)$ the map
$\ev_{\mathbf i}:{\mathcal X}_{\mathbf i}\rightarrow (G^{u,v},\pi_G)$ is a
Poisson birational isomorphism onto a Zariski open set of the double Bruhat cell
$G^{u,v}$.
\end{thm}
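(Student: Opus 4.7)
The plan is to proceed by induction on the length of the double word $\mathbf{i}$, using the amalgamation structure that the construction of $\ev_{\mathbf i}$ is built on. The base case breaks into three sub-cases. For $\mathbf{i}=\mathbf 1$, the evaluation sends $(\mathbb C_{\neq 0})^\ell$ isomorphically onto the Cartan subgroup $H=G^{1,1}$, and the statement is immediate since $\varepsilon(\mathbf 1)=0$ matches the triviality of $\pi_G|_H$. For $\mathbf i\in\{i,\overline{i}\}$, the image of $\ev_i$ lies in $BE^iH\subset G^{1,s_i}$ and similarly for $\ev_{\overline i}$; birationality follows because the factorization $g=\bigl(\prod_j H^j(x_{\binom j0})\bigr)E^iH^i(x_{\binom i1})$ of a generic element of $G^{1,s_i}$ is unique, so the factors give the inverse map. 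The Poisson property must be checked by hand: one computes $\{\ev_i^*f,\ev_i^*g\}_G$ against the Sklyanin bracket using the gradient pairings with $r$, and confirms that the resulting bracket on coordinates matches $\widehat{\varepsilon}(i)_{\binom ak\binom bl}x_{\binom ak}x_{\binom bl}$, pinning down the half-integer entries $a_{ij}/2$ in (\ref{equ:exeps}).

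For the inductive step, let $\mathbf i\in R(u_1,v_1)$ and $\mathbf j\in R(u_2,v_2)$ with $\ell(u_1u_2)=\ell(u_1)+\ell(u_2)$ and $\ell(v_1v_2)=\ell(v_1)+\ell(v_2)$, and assume the theorem for $\mathbf i$ and $\mathbf j$. By definition $\ev_{\mathbf{ij}}=m_G\circ(\ev_{\mathbf i}\times\ev_{\mathbf j})\circ\mathfrak{s}$ where $\mathfrak{s}$ is a rational section of the amalgamation $\mathfrak{m}$ and $m_G$ is group multiplication in $G$. Since $(G,\pi_G)$ is a Poisson-Lie group, $m_G$ is Poisson for the product structure, and the inductive hypothesis makes both $\ev_{\mathbf i}$ and $\ev_{\mathbf j}$ Poisson. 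It then suffices to verify that $\mathfrak{m}$ itself is Poisson between the product and amalgamated seed tori — a direct check from the formula (\ref{equ:amaleps}), since the only nontrivial contribution at the glued index $\binom i{N^i(\mathbf i)}$ reproduces the prescribed sum of $\varepsilon$-entries.

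To identify the image with a Zariski open subset of $G^{u,v}$, use the standard fact (which follows by repeated application of the Bruhat decomposition lemma $Bs_iB\cdot Bs_jB\subset Bs_is_jB$ when $\ell(s_is_j)>\ell(s_j)$) that any reduced expression of $(u,v)$ gives rise to a factorization of a generic element of $G^{u,v}$ into Cartan factors $H^j(\cdot)$ interspersed with $E^i$ and $F^i$ pieces, and that the factors can be read off rationally from such an element. This yields a rational inverse to $\ev_{\mathbf i}$ on a Zariski open subset, and simultaneously pins the image down to $G^{u,v}$ rather than anything larger: the product $\widehat uB\cdot B_-\widehat v$ lies in $G^{u,v}$ by construction, and dimension counting $\dim G^{u,v}=\ell+\ell(u)+\ell(v)=|I(\mathbf i)|$ shows that the rational map is dominant.

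The main obstacle is the elementary Poisson verification. Once the base case is settled, the remainder of the argument is structural: amalgamation is Poisson by design, Poisson-Lie multiplication propagates it, and birationality is a dimension-and-uniqueness check. The elementary computation, however, requires unwinding the Sklyanin bracket against the specific group elements $H^j(\cdot)E^iH^i(\cdot)$, and the appearance of the half-integers $a_{ij}/2$ — which individually violate the integrality condition on $\varepsilon$ but become integral after amalgamation along reduced words — is the subtle point that must be matched carefully with the symmetrizers $d_j$ so that the final $\widehat{\varepsilon}(\mathbf i)$ is skew-symmetric with integer entries away from $I_0(\mathbf i)$.
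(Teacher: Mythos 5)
The paper does not prove this statement: Theorem \ref{thm:evG} is imported with its citation from Fock--Goncharov \cite{FGcluster}, so there is no internal proof to compare yours against. Your outline --- elementary seeds checked directly against the Sklyanin bracket, induction through amalgamation using that multiplication $G\times G\to G$ is Poisson precisely because $(G,\pi_G)$ is Poisson--Lie, descent of the Poisson property along the surjective Poisson map $\mathfrak{m}$ (your section $\mathfrak{s}$ is legitimate only because the product $\ev_{\mathbf i}\times\ev_{\mathbf j}$ followed by $m_G$ is constant on the fibers of $\mathfrak{m}$, which is also what makes $\ev_{\mathbf{ij}}$ well defined), and the Fomin--Zelevinsky factorization of generic elements of $G^{u,v}$ for birationality --- is essentially the route of that cited source and is structurally sound, modulo the harmless slip that the image of $\ev_i$ is $HE^iH$, not $BE^iH$ (the latter equals $B$, which is not contained in $G^{1,s_i}$).
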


Let  $\mathbf i$ be a reduced word.
Following \cite{BZtensor}, we call a \emph{$d$-move}
a transformation of $\mathbf i$ that replaces $d$ consecutive entries
$i,j,i,j, \ldots$ by $j,i,j,i,\ldots$,
for some $i$ and $j$ such that $d$ is the order of $s_i s_j$, that is:
if $a_{ij}a_{ji} = 0$ (resp.\ $1,2,3$), then $d=2$ (resp.\ $3,4,6$).
By the Tits theorem, every two reduced words which represent the same element of a
Coxeter group are related by a sequence of $d$-moves.
Next, let us  say that a letter $i$ of $\mathbf i$ is \emph{positive}
if $i\in[1,\ell]$ and \emph{negative} if
$i\in[\overline 1,\overline \ell]$. Considering the group $W \times W$,
we conclude that every two double reduced words $\mathbf{i}, \mathbf{j} \in
R(u,v)$ can be obtained from each other by a sequence of
\emph{generalized $d$-moves}, i.e. \emph{positive} $d$-moves for the alphabet $[1,\ell]$,
\emph{negative} $d$-moves for the alphabet $[\overline 1,\overline \ell]$, or \emph{mixed} $2$-moves
that interchange two consecutive indices of opposite signs.
Let us say that a double reduced word
of length $d$ is \emph{minimal} if we can perform a generalized $d$-move
on it. {To any two minimal double
words $\mathbf i$ and $\mathbf{i'}$ related by a generalized $d$-move}
$\delta:\mathbf i\mapsto\mathbf{i'}$, we associate a cluster
transformation ${\mu}_{\mathbf{i}\rightarrow\mathbf{i'}}:
{\mathcal X}_{\mathbf i}\rightarrow{\mathcal X}_{\mathbf {i'}}$
in the following way:

{\footnotesize

$${\mu}_{\mathbf{i}\rightarrow\mathbf{i'}}=
\begin{cases}
 \mu_{\binom i{1}} &\text{
if $\delta$ is a move $i\ \overline i\leftrightarrow\overline i\ i$ or a 3-move;}\\
\mu_{\binom i{1}}\mu_{\binom j{1}}\mu_{\binom i{1}}& \text{
if $\delta$ is a 4-move;}\\
\mu_{\binom j{2}}\mu_{\binom i{1}}\mu_{\binom j{1}}
\mu_{\binom j{2}}\mu_{\binom i{2}}\mu_{\binom j{2}}\mu_{\binom i{1}}\mu_{\binom i{2}}
\mu_{\binom j{1}}\mu_{\binom j{2}}&  \text{if $\delta$ is a 6-move;}\\
 \text{the identity map}&\text{otherwise.}
\end{cases}.
$$

}

Since mutations commute with amalgamation, we may extend these definitions
to any two double words $\mathbf i,\mathbf{i'}\in R(u,v)$ related by a generalized $d$-move.
Finally, if  $\mathbf i,\mathbf j$ are {double words} linked by a sequence
of generalized $d$-moves and  $\mathbf{i}\to\mathbf{i_1}\rightarrow
\dots\rightarrow\mathbf{i_{n-1}}\rightarrow\mathbf{j}$ is the associated chain of elements,
we define the cluster transformation
${\mu}_{\mathbf{i}\rightarrow\mathbf{j}}$ as the composition
${\mu}_{\mathbf{i_{n-1}}\rightarrow\mathbf j}\circ \dots\circ
{\mu}_{\mathbf{i}\rightarrow\mathbf{i_1}}$. {A cluster $\mathcal X$-variety
is associated to every double Bruhat cell via the following theorem.}

\begin{thm}[\cite{FGcluster}]\label{fg}For any $u,v\in W$
and $\mathbf i,\mathbf j\in R(u,v)$, we have
$\ev_{\mathbf{i}}=\ev_{\mathbf{j}}\circ{\mu}_{\mathbf{i}\rightarrow\mathbf j}$.
\end{thm}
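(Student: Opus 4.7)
The plan is to reduce the claim to the elementary base cases via two successive reductions. First, by the discussion preceding the statement, any two double reduced words $\mathbf{i},\mathbf{j}\in R(u,v)$ are connected by a chain of generalized $d$-moves $\mathbf{i}\to\mathbf{i_1}\to\cdots\to\mathbf{j}$. Since $\mu_{\mathbf{i}\to\mathbf{j}}$ is defined as the corresponding composition, it suffices by induction on the length of the chain to verify the equality $\ev_{\mathbf{i}}=\ev_{\mathbf{i'}}\circ\mu_{\mathbf{i}\to\mathbf{i'}}$ whenever $\mathbf{i}\to\mathbf{i'}$ is a single generalized $d$-move.

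Second, I would use the fact (already recorded in the paper) that mutations commute with the amalgamation map $\mathfrak{m}$, together with the inductive definition $\ev_{\mathbf{i}}(\mathbf{z})=\ev_{i_1}(\mathbf{x_1})\cdots\ev_{i_k}(\mathbf{x_k})$. A generalized $d$-move affects only a contiguous block of $d$ consecutive letters; writing $\mathbf{i}=\mathbf{i}_L\,\mathbf{c}\,\mathbf{i}_R$ and $\mathbf{i'}=\mathbf{i}_L\,\mathbf{c'}\,\mathbf{i}_R$, the cluster transformation $\mu_{\mathbf{i}\to\mathbf{i'}}$ is supported on the block $\mathbf{c}\to\mathbf{c'}$, and amalgamation on the left/right with $\mathbf{i}_L,\mathbf{i}_R$ simply extends by identity on those factors. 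Hence the problem reduces to a finite list of ``local'' identities, one per type of $d$-move.

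Each local identity is then an explicit computation in a rank-$\leq 2$ Lie subgroup. Concretely:
\begin{itemize}
\item For a positive (or negative) $2$-move between commuting indices ($a_{ij}a_{ji}=0$), the corresponding factors $E^iH^i(x)$ and $E^jH^j(y)$ live in commuting $SL_2$-subgroups, so the products are literally equal and the identity cluster transformation does the job.
\item For the mixed $2$-move $i\,\overline{i}\leftrightarrow\overline{i}\,i$, the identity reduces to the standard $SL_2$ Gauss swap
$\left(\begin{smallmatrix}1&x\\0&1\end{smallmatrix}\right)\left(\begin{smallmatrix}1&0\\y&1\end{smallmatrix}\right)
=\left(\begin{smallmatrix}1&0\\y/(1+xy)&1\end{smallmatrix}\right)\mathrm{diag}(1+xy,(1+xy)^{-1})\left(\begin{smallmatrix}1&x/(1+xy)&\\0&1\end{smallmatrix}\right)$,
read off using the embeddings $\varphi_i$; matching coordinates reproduces exactly the mutation rule for $\mu_{\binom{i}{1}}$.
\item For the $3$-move, the $4$-move, and the $6$-move, the identity is, respectively, a computation inside (the appropriate rank-$2$ subgroup of type) $A_2$, $B_2$, and $G_2$. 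In each case one expands the product of Chevalley factors on both sides, uses the Gauss-like identities in the rank-$2$ subgroup to rewrite the LHS in the form prescribed by $\mathbf{i'}$, and reads off the birational change of variables; one then checks that this coincides with the prescribed composition of $\mu$'s in the formula for $\mu_{\mathbf{i}\to\mathbf{i'}}$.
\end{itemize}

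The main obstacle will be the $6$-move case: the cluster transformation is a composition of ten mutations, and the ambient rank-$2$ group is $G_2$, where the explicit multiplicative formulas for products of Chevalley factors are considerably more complicated than in $A_2$ or $B_2$. My approach here would be to exploit the fact, established by Theorem~\ref{thm:evG}, that $\ev_{\mathbf{i}}$ and $\ev_{\mathbf{i'}}\circ\mu_{\mathbf{i}\to\mathbf{i'}}$ are both Poisson birational maps onto the same double Bruhat cell, and to reduce the verification to a Zariski-dense open subset on which the Gauss decomposition behaves transparently. Uniqueness of factorization through $BB_-$ then pins down the transformation, so only the ``shape'' of the birational change of variables — not every coefficient — needs to be matched against the prescribed word of mutations.
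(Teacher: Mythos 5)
Your overall strategy is sound and is essentially the one behind the cited result: the paper itself gives no proof of this theorem (it is imported from \cite{FGcluster}), and the reduction you describe --- induction on a chain of generalized $d$-moves, then localization to a rank $\leq 2$ block using the fact that mutations commute with amalgamation, then case-by-case verification --- is exactly how the statement is established in that reference. Your treatment of the commuting $2$-moves and of the mixed move $i\,\overline{i}\leftrightarrow\overline{i}\,i$ via the $SL_2$ Gauss swap is correct (you should also dispose of the mixed $2$-move with \emph{distinct} indices $i,\overline{j}$, but that is immediate since $E^i$, $F^j$ and $H^i(x)$, $H^j(y)$ all commute there, matching the ``identity map'' clause).

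The genuine gap is your handling of the $6$-move (and implicitly the $4$-move). Theorem \ref{thm:evG} tells you that $\ev_{\mathbf{i}}$ and $\ev_{\mathbf{i'}}$ are each Poisson birational isomorphisms onto Zariski open subsets of the same double Bruhat cell, hence that there exists a unique birational map $\phi$ with $\ev_{\mathbf{i}}=\ev_{\mathbf{i'}}\circ\phi$, namely $\phi=\ev_{\mathbf{i'}}^{-1}\circ\ev_{\mathbf{i}}$. But the content of the theorem in the $G_2$ case is precisely the identification of this transfer map $\phi$ with the \emph{specific} composition $\mu_{\binom j{2}}\mu_{\binom i{1}}\mu_{\binom j{1}}\mu_{\binom j{2}}\mu_{\binom i{2}}\mu_{\binom j{2}}\mu_{\binom i{1}}\mu_{\binom i{2}}\mu_{\binom j{1}}\mu_{\binom j{2}}$ prescribed in the definition of $\mu_{\mathbf{i}\to\mathbf{i'}}$. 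Uniqueness of the factorization pins down $\phi$, but it cannot certify that the prescribed word of ten mutations \emph{computes} $\phi$; two different compositions of mutations can have the same ``shape'' as birational maps while differing in their coefficients, and it is exactly the coefficient-level agreement that the theorem asserts. So the step ``only the shape, not every coefficient, needs to be matched'' does not prove anything: either you carry out the explicit change-of-variables computation in the $G_2$ (and $B_2$) subgroup --- this is the unavoidable computational core, historically done via the Chamber Ansatz factorization identities of Fomin--Zelevinsky and Berenstein--Zelevinsky --- or you find a genuinely structural substitute for it, which your appeal to Theorem \ref{thm:evG} is not.
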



\subsection{{Cluster $\mathcal X$-varieties} related to $(G, \pi_*)$}
\label{section:cluster*}
We sum-up here some definitions and results of \cite{RB}. To state an analog
of Theorem \ref{thm:ev} and Theorem \ref{fg} for~$(G, \pi_*)$, we need new
evaluation maps and an extension of the combinatorics.
Let us first
recall that the fundamental weights $\omega_i\in{\mathfrak h}^*$
are permuted by the transformation $(-w_0)$.
We denote $i\mapsto i^{\star}$ the induced permutation of the indices of these
weights, that is $\omega_{i^{\star}}=-w_0(\omega_i)$.
{Let $w\in W$ and $s_{i_1}\dots s_{i_n}$
be a reduced decomposition of $w$, then $w^{\star}\in W$ is the element given by
$w^{\star}=s_{i_1^{\star}}\dots s_{i_n^{\star}}$.} (The Tits theorem implies that
this definition doesn't depend on the choice of the reduced expression for $w$.)

For every $w, w'\in W$, we denote $w\rightarrow w'$ if and only if we can find
$i\in[1,\ell]$ such that $w=s_iw'$ and $\ell(w)=\ell(w')+1$, and denote
$\leq$ the \emph{right weak order} on $W$, i.e. $w'\leq w$ if there exists a chain
$w\rightarrow \dots\rightarrow w'. $

\begin{definition}
Let $w_1\leq v, w_2\in W$.
A \emph{$(w_1,w_2)_v$-word} $\mathbf i$
is a double word linked to a product $\mathbf{i_1}\mathbf{i_2}$, with
$\mathbf {i_1}\in R({w_1^{\star}}^{-1},vw_1^{-1})$ and
$\mathbf{i_2}\in R({w_2}^{-1},w_0w_2^{-1})$, by a sequence of mixed $2$-moves.
The product $\mathbf{i_1}\mathbf{i_2}$ is called a \emph{trivial $(w_1,w_2)_v$-word}.
In particular, the set $R({v^{\star}}^{-1},w_0)$ is the set of $(v,e)_v$-words. Let $W(w_1,w_2)_v$
denote the set of $(w_1,w_2)_v$-words and $D(v)$ be the union over $w_1\leq v,w_2\in W$
of all the sets $W(w_1,w_2)_v$. In particular, we have $R({v^{\star}}^{-1},w_0)\subset D(v)$.
\end{definition}

The set $ D(v)$ are going to play in Theorem \ref{thm:ev*1} and Theorem \ref{thm:ev*}
the same role that were playing the sets $R(u,v)$ in Theorem \ref{thm:ev} and
Theorem \ref{fg}.

Now, for every $x\in B_-B$, let $x=[x]_-[x]_0[x]_+=[x]_{\leq 0}[x]_+$ be
its {Gauss decomposition}, that is: $[x]_{\pm}$ belongs to the unipotent parts
of the respective Borel subgroups and $[x]_0$ to the Cartan part of $G$.
Let us denote $\theta: G \rightarrow G$ the Cartan automorphism
given by
$$\begin{array}{cccc}
a^{\theta}=a^{-1}\in H,& {E^i}^{\theta}=F^i,&
{F^i}^{\theta}=E^i\ .
\end{array}$$

\begin{definition}
Let $w_1\leq v,w_2\in W$, $\mathbf i=\mathbf{i_1}\mathbf{i_2}$ be a trivial
$(w_1,w_2)_v$-word and $\mathbf x\in{\mathcal X}_{\mathbf i}$,
$\mathbf{x_1}\in{\mathcal X}_{\mathbf{i_1}}$, $\mathbf {x_2}
\in{\mathcal X}_{\mathbf{i_2}}$ be such that
$\mathbf x=\mathfrak{m}(\mathbf{x_1},\mathbf{x_2})$.
We define the maps $\ev_{\mathbf i}^{\red}:{\mathcal X}_{\mathbf{i_2}}\rightarrow G$
and $\ev^{\mathfrak L}_{\mathbf i},\ev^{\mathfrak R}_{\mathbf i}:{\mathcal X}_{\mathbf i}
\rightarrow G$ by:
$$
\begin{array}{lllll}
\ev_{\mathbf i}^{\red}(\mathbf{x})=\ev_{\mathbf i}(\mathbf x)\displaystyle
\prod_{j\in[1,\ell]}H^j(x_{\binom{j}{N^j(\mathbf i)}}^{-1})\ ,
\end{array}
$$
$$\begin{array}{ccc}
\ev^{\mathfrak R}_{\mathbf i}(\mathbf x)=\ev_{\mathbf{i_1}}(\mathbf{x_1})
[\ev_{\mathbf{i_2}}^{\red}(\mathbf{x_2})\widehat{w_2w_0}]_{\leq 0}&\mbox{and}
&\ev^{\mathfrak L}_{\mathbf i}(\mathbf x)=\ev_{\mathbf{i_1}}(\mathbf{x_1})
[(\ev^{\mathfrak R}_{\mathbf{i_2}}(\mathbf{x_2}))^{\theta}\widehat{w_0}]_{\leq 0}^{\theta}\ .
\end{array}
$$
These maps are then extended to every $\mathbf i\in D(v)$ by setting
$$\begin{array}{ccc}
\ev^{\mathfrak L}_{\mathbf i}=\ev^{\mathfrak L}_{\mathbf{i_1}\mathbf{i_2}}
\circ\mu_{\mathbf i\to\mathbf{i_1}\mathbf{i_2}}&\mbox{and}&
\ev^{\mathfrak R}_{\mathbf i}=\ev^{\mathfrak R}_{\mathbf{i_1}\mathbf{i_2}}
\circ\mu_{\mathbf i\to\mathbf{i_1}\mathbf{i_2}}\ .
\end{array}$$
For every $v\in W$ and $\mathbf i\in D(v)$, we define the \emph{twisted evaluation}
\begin{equation}\label{equ:evhat}
\widehat{\ev}_{\mathbf i}:{\mathcal X}_{[\mathbf i]_{\mathfrak R}}
\rightarrow(G,\pi_*):\mathbf{x}\mapsto
\ev_{\mathbf{i}}^\mathfrak{L}(\mathbf{x})\ev_{\mathbf 1}({\mathbf x(\mathfrak{R})})\widehat{w_0}
{\ev_{\mathbf{i}}^\mathfrak{R}(\mathbf{x})}^{-1}\ ,
\end{equation}
where, for every double word $\mathbf i$, the seed $\mathcal X$-torus ${\mathcal X}_{[\mathbf i]_{\mathfrak R}}$
is the Poisson variety canonically associated to the seed $[\mathbf i]_{\mathfrak R}
=(I(\mathbf i),I_0(\mathbf i),\eta({\mathbf i}),d({\mathbf i}))$ defined by the values
\begin{equation}\label{equ:defJ}\eta(\mathbf i)_{ij}=\left\{
\begin{array}{ll}
\varepsilon(\mathbf i)_{ij}&\text{if } i,j\in I(\mathbf i)\backslash I^{\mathfrak R}_0(\mathbf i)\ ;\\
0& \text{otherwise.}
\end{array}
\right.
\end{equation}
\end{definition}

Finally, for every $t\in H$, let ${\mathcal X}_{[\mathbf{i}]_{\mathfrak{R}}}(t)$ be the subset
of ${\mathcal X}_{[\mathbf i]_{\mathfrak R}}$ obtained by fixing the cluster variables
$\mathbf x({\mathfrak{R}})=\{(x_j)\mid{j\in I^{\mathfrak{R}}_0(\mathbf i)}\}$
via the equality $\ev_{\mathbf 1}(\mathbf x({\mathfrak{R}}))=t$.
Equation (\ref{equ:defJ})
implies that ${\mathcal X}_{\mathbf{i}}(t)$ is a Poisson
subvariety of ${\mathcal X}_{[\mathbf i]_{\mathfrak R}}$.

\begin{thm}\cite[Theorem 7.9]{RB}\label{thm:ev*1} For every $v\in W$, $t\in H$ and $\mathbf i\in D(v)$,
the map $\widehat{\ev}_{\mathbf i}:{\mathcal X}_{\mathbf i}(t)\rightarrow (F_{t,w_0v^{-1}},\pi_*)$
is a Poisson birational isomorphism onto a Zariski open set of~$F_{t,w_0v^{-1}}$.
\end{thm}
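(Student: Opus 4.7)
The strategy is to reduce to the case of trivial $(w_1,w_2)_v$-words $\mathbf i = \mathbf{i_1}\mathbf{i_2}$ and then establish separately: (i) the image lies inside $F_{t, w_0 v^{-1}}$, (ii) the map is Poisson with respect to $\pi_*$, and (iii) it is birational onto a Zariski open subset.

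\textbf{Reduction to trivial words.} For general $\mathbf i\in D(v)$, the maps $\ev^{\mathfrak L}_{\mathbf i}$ and $\ev^{\mathfrak R}_{\mathbf i}$ are defined as $\ev^{\mathfrak{L}/\mathfrak{R}}_{\mathbf{i_1}\mathbf{i_2}}\circ\mu_{\mathbf i\to\mathbf{i_1}\mathbf{i_2}}$, and the coordinates $\mathbf x(\mathfrak R)$ are fixed by the frozen variables, which are not touched by $\mu_{\mathbf i\to\mathbf{i_1}\mathbf{i_2}}$ (built from mixed $2$-mutations supported away from $I_0^{\mathfrak R}$). Thus $\mu_{\mathbf i\to\mathbf{i_1}\mathbf{i_2}}$ restricts to a Poisson birational isomorphism $\mathcal X_{\mathbf i}(t)\to\mathcal X_{\mathbf{i_1}\mathbf{i_2}}(t)$ and we may assume $\mathbf i$ is trivial.

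\textbf{Image in $F_{t, w_0 v^{-1}}$.} By Theorem~\ref{thm:evG}, $\ev_{\mathbf{i_1}}$ has dense image in $G^{(w_1^\star)^{-1},\,vw_1^{-1}}$ and $\ev_{\mathbf{i_2}}$ in $G^{w_2^{-1},\,w_0w_2^{-1}}$. The reduction $\ev^{\red}_{\mathbf{i_2}}$ kills the Cartan part, and the Gauss projection $[\ev^{\red}_{\mathbf{i_2}}(\mathbf{x_2})\widehat{w_2w_0}]_{\leq 0}$ produces a $B_-$-factor whose Bruhat label is controlled by the position of $w_2$. Applying $\theta$ and $\widehat{w_0}$ symmetrically yields $\ev^{\mathfrak L}_{\mathbf i}(\mathbf x)\in B$. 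Multiplying together $\ev^{\mathfrak L}_{\mathbf i}\cdot t\widehat{w_0}\cdot(\ev^{\mathfrak R}_{\mathbf i})^{-1}$ and tracking the Weyl representatives, the Bruhat component computes to $w_0 v^{-1}$. For the Steinberg fiber, observe that after cancellation of the $\ev^{\mathfrak L}$ and $(\ev^{\mathfrak R})^{-1}$ factors (up to $B$, $B_-$ conjugation), the element is conjugate to $t\widehat{w_0}$, whose class of invariants is precisely $F_t$ by the Jordan decomposition / Steinberg description recalled in \eqref{equ:decG^*}. The constraint $\ev_{\mathbf 1}(\mathbf x(\mathfrak R))=t$ pins down the $H$-parameter.

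\textbf{Poisson property and birationality.} This is the technical heart of the proof and the main obstacle. The idea is to express $\pi_*$ as $\pi_G$ corrected by explicit $r_{\pm}$-twist terms, as in Proposition~\ref{prop:STSPoisson}, and check that the twist $\mathbf x\mapsto \ev^{\mathfrak L}(\mathbf x)\cdot c(\mathbf x)\cdot\ev^{\mathfrak R}(\mathbf x)^{-1}$ with $c(\mathbf x)=\ev_{\mathbf 1}(\mathbf x(\mathfrak R))\widehat{w_0}$ converts $\pi_G$-Poisson evaluations into $\pi_*$-Poisson ones; this is the semiclassical shadow of the Evens--Lu \cite{EL} construction that originally motivated the definition \eqref{equ:evhat}. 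The key identity is that the correction terms produced by left/right-translating the two factors cancel precisely against the difference $\pi_*-\pi_G$, because the $\widehat{w_0}$-twist exchanges the roles of $B$ and $B_-$ in the $r$-matrix splitting, and because \eqref{equ:defJ} removes the Cartan brackets that would otherwise obstruct the cancellation. Once the Poisson property is established, the map is a morphism between smooth irreducible varieties of the same dimension $\ell(v)+\ell(w_0)$, and Theorem~\ref{thm:evG} together with the birationality of the Gauss projection $B_-B\to B_-\times B/H$ supplies a rational inverse on the dense open locus where all the projections are defined, giving a Zariski-open image inside $F_{t,w_0v^{-1}}$.

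\textbf{Main obstacle.} The Poisson verification is the only nontrivial step: it demands keeping precise track of all left-invariant versus right-invariant gradient contributions in the Sklyanin and Semenov-Tian-Shansky brackets, and matching them against the coboundary produced by the two-sided twist by $\ev^{\mathfrak L}$ and $\ev^{\mathfrak R}$. Everything else (image computation, dimension, birationality) reduces to relatively mechanical manipulations of Bruhat decompositions combined with Theorem~\ref{thm:evG}.
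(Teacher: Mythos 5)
First, note that the paper you were asked to match does not actually contain a proof of Theorem~\ref{thm:ev*1}: the statement is imported verbatim from the prequel, \cite[Theorem 7.9]{RB}, and the remark immediately following it records that the proof there rests on \cite[Corollary 5.11]{EL}. The intended argument is therefore structural: the twisted evaluation (\ref{equ:evhat}) is factored through a map of Evens--Lu type that is \emph{already known} to be Poisson onto $F_{t,w}$, composed with the Fock--Goncharov evaluations, which are Poisson for $\pi_G$ by Theorem~\ref{thm:evG}. Your proposal inverts this logic. Your reduction to trivial $(w_1,w_2)_v$-words is fine and matches the definitions, but your ``technical heart'' --- the direct verification that the $r_\pm$-correction terms produced by the two-sided twist ``cancel precisely against the difference $\pi_*-\pi_G$'' --- is never carried out: that cancellation \emph{is} the theorem. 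As written, this step is a restatement of what must be proved, not a proof; to complete it along your lines you would have to perform the full Semenov--Tian-Shansky gradient computation for (\ref{equ:evhat}), or else invoke the Evens--Lu result, which is exactly what \cite{RB} does.

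Second, your image computation contains concrete errors that the Evens--Lu input is designed to repair. (a) The claim $\ev^{\mathfrak L}_{\mathbf i}(\mathbf x)\in B$ is false in general: $\ev^{\mathfrak L}_{\mathbf i}$ contains the left factor $\ev_{\mathbf{i_1}}(\mathbf{x_1})$, which lies in the double Bruhat cell $G^{({w_1^{\star}})^{-1},\,vw_1^{-1}}$, and an element of that cell lies in $B$ only when $w_1=e$. (b) The Steinberg-fiber claim cannot be obtained from ``the element is conjugate to $t\widehat{w_0}$''. Conjugating away the common prefix $\ev_{\mathbf{i_1}}(\mathbf{x_1})$ leaves an element of $B\,t\widehat{w_0}\,B_-$ whose two factors are a priori unrelated, and that set meets infinitely many Steinberg fibers: already for $\PGL(2,\mathbb C)$ the invariants of $b\,\widehat{w_0}\,b_-^{-1}$ vary as $b\in B$ and $b_-\in B_-$ vary independently. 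What confines the image to a single fiber is the compatibility between the $B$-factor and the $B_-$-factor in (\ref{equ:evhat}) --- both are manufactured from the \emph{same} $\mathbf{x_2}$ via the $\theta$-twisted Gauss projections --- and matching this compatibility with the Evens--Lu description of $F_{t,w}$ is precisely the content of \cite[Corollary 5.11]{EL}, which you never use. Without that input, neither the inclusion of the image in $F_{t,w_0v^{-1}}$ nor the birationality onto a Zariski open subset is established; your dimension count $\ell(v)+\ell(w_0)$ (which is correct) does not substitute for it.
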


\begin{rem}The origin of the formula (\ref{equ:evhat}) comes from morphisms
defined by Evens and Lu in \cite{EL}, generalized in \cite{RB}. (The proof of
Theorem \ref{thm:ev*1} is based on \cite[Corollary 5.11]{EL}.)
\end{rem}

\begin{thm}\cite[Theorem 8.12]{RB}\label{thm:ev*} Let $v\in W$. For every $\mathbf i,\mathbf j\in D(v)$,
there exists a birational Poisson isomorphism ${\widehat{\mu}}_{\mathbf{i}\rightarrow\mathbf j}:
{\mathcal X}_{[\mathbf{i}]_{\mathfrak{R}}}\to{\mathcal X}_{[\mathbf{j}]_{\mathfrak{R}}}$
such that $\widehat{\ev}_{\mathbf{i}}=\widehat{\ev}_{\mathbf{j}}
\circ{\widehat{\mu}}_{\mathbf{i}\rightarrow\mathbf j}$.
\end{thm}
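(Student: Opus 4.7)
The plan is to build the desired isomorphism $\widehat{\mu}_{\mathbf{i}\to\mathbf{j}}$ as a finite composition of elementary birational Poisson isomorphisms indexed by a sequence of moves linking $\mathbf{i}$ to $\mathbf{j}$ inside $D(v)$. The first reduction I would make is to trivial words: by the very definition of $\ev^{\mathfrak{L}}_{\mathbf{i}}$ and $\ev^{\mathfrak{R}}_{\mathbf{i}}$ for a general $\mathbf{i}\in D(v)$, the classical cluster transformation $\mu_{\mathbf{i}\to\mathbf{i_1}\mathbf{i_2}}$ already intertwines $\widehat{\ev}_{\mathbf{i}}$ with $\widehat{\ev}_{\mathbf{i_1}\mathbf{i_2}}$, so it is enough to exhibit $\widehat{\mu}$ on pairs of trivial $(w_1,w_2)_v$-words. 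One then needs to check that the cluster transformation used here is Poisson for the modified bracket $\eta$ of (\ref{equ:defJ}); this is true because $\eta$ and $\varepsilon$ differ only in the rows and columns indexed by $I_0^{\mathfrak{R}}$, and the mutations invoked never target a frozen index.

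Between two trivial words, a chain is built out of three kinds of elementary moves: (a) a generalized $d$-move inside $\mathbf{i_1}$, (b) a generalized $d$-move inside $\mathbf{i_2}$, and (c) a $\tau$-move that transfers a letter across the boundary between $\mathbf{i_1}$ and $\mathbf{i_2}$, thus changing the pair $(w_1,w_2)$. For (a) I would invoke Theorem \ref{fg} on $\mathbf{i_1}$ and amalgamate with the identity on $\mathcal{X}_{\mathbf{i_2}}$; the intertwining of $\widehat{\ev}$ then follows at once because both $\ev^{\mathfrak{L}}_{\mathbf{i_1}\mathbf{i_2}}$ and $\ev^{\mathfrak{R}}_{\mathbf{i_1}\mathbf{i_2}}$ factor through $\ev_{\mathbf{i_1}}$ on the left. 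For (b) I would apply Theorem \ref{fg} to $\mathbf{i_2}$; the key verification is that $\ev_{\mathbf{i_2}}^{\red}$ and the Gauss factor $[\ev_{\mathbf{i_2}}^{\red}(\mathbf{x_2})\widehat{w_2 w_0}]_{\leq 0}$ transform correctly, which uses the $H$-equivariance coming from Theorem \ref{thm:evG} on the frozen column together with the compatibility of $[\,\cdot\,]_{\leq 0}$ with right multiplication by Cartan elements.

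The central case is (c), where the required $\widehat{\mu}$ is the tropical mutation introduced in the prequel \cite{RB}. The main obstacle is twofold. First, one must check that this tropical mutation is Poisson with respect to $\eta$ on both sides, which reduces to a direct combinatorial computation comparing the tropical mutation formula with the shape of $\eta(\mathbf{i})$ around the migrating letter. Second, one must establish the $G$-level identity
\[
\ev^{\mathfrak{L}}_{\mathbf{i}}(\mathbf{x})\,\ev_{\mathbf{1}}(\mathbf{x}(\mathfrak{R}))\,\widehat{w_0}\,\ev^{\mathfrak{R}}_{\mathbf{i}}(\mathbf{x})^{-1}=\ev^{\mathfrak{L}}_{\mathbf{j}}(\widehat{\mu}(\mathbf{x}))\,\ev_{\mathbf{1}}(\widehat{\mu}(\mathbf{x})(\mathfrak{R}))\,\widehat{w_0}\,\ev^{\mathfrak{R}}_{\mathbf{j}}(\widehat{\mu}(\mathbf{x}))^{-1}
\]
by tracking how the Gauss-decomposed factors $[\,\cdot\,]_{\leq 0}$ and $[\,\cdot\,]_{\leq 0}^{\theta}$ change when a simple reflection $\widehat{s_i}$ migrates across the boundary between $\mathbf{i_1}$ and $\mathbf{i_2}$, and matching this change, letter by letter, to the transformation rule of the tropical mutation. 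The delicate point is the interaction of the representative identities in $W$ with the Cartan involution $\theta$, and this matching is the bulk of the proof.

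To finish, I would verify that the resulting composition does not depend on the chosen sequence of moves from $\mathbf{i}$ to $\mathbf{j}$: two such composites both intertwine $\widehat{\ev}_{\mathbf{i}}$ with $\widehat{\ev}_{\mathbf{j}}$, which by Theorem \ref{thm:ev*1} are birational isomorphisms on each leaf $\mathcal{X}_{[\mathbf{j}]_{\mathfrak{R}}}(t)$, so the two composites must coincide as birational Poisson maps.
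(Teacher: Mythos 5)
Your overall skeleton (reduce to trivial words, then compose elementary intertwining isomorphisms along a chain of moves) is indeed the paper's skeleton, and your cases (a) and (b) correctly reproduce how generalized $d$-moves are treated, via Theorem \ref{fg} and the identity (\ref{equ:taumuR}). The gap is in case (c), which is exactly the hard part of the theorem. First, the move you describe --- ``transferring a letter across the boundary between $\mathbf{i_1}$ and $\mathbf{i_2}$'' --- does not exist in this combinatorics: if you move the last letter of $\mathbf{i_1}$ into $\mathbf{i_2}$, the positive and negative subwords on the two sides no longer represent a pair of the required form, so the result is not a trivial $(w_1',w_2')_v$-word for any $(w_1',w_2')$. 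The moves that actually change the pair are of two very different kinds: (i) the right $\tau$-moves, which act on the \emph{last} letter of the whole word, change only $w_2$, and are intertwined by the \emph{identity} map on ${\mathcal X}_{[\cdot]_{\mathfrak R}}$ (this is the point of the truncated bracket $\eta$ in (\ref{equ:defJ}), which kills the rows and columns indexed by $I_0^{\mathfrak R}$); and (ii) the dual moves $\Delta_j$, which change the last letter of $\mathbf{i_1}$ \emph{and simultaneously flip the whole suffix} $\mathbf{i_2}$, and which are the only moves connecting $D_{w_1}(v)$ to $D_{w_1'}(v)$ when $w_1\neq w_1'$ (within a fixed $D_{w_1}(v)$, Lemma \ref{lemma:relPWR} plus (\ref{equ:taumuR}) already suffice).

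Second, the map attached to a dual move is not ``the tropical mutation'': it is the saltation $\Xi_{s_j}$, namely the monomial map $\Xi_j$ built on the generalized cluster transformation $\zeta_{\mathbf{i_+}}$ --- a long composition of mutations and right tropical mutations realizing the twist $b\mapsto[b\,\widehat{w_0}]_{\leq 0}$ at the cluster level (Corollary \ref{lemma:twist}) --- sandwiched between two cluster transformations. The corresponding intertwining $\widehat{\ev}_{\mathbf i}=\widehat{\ev}_{\Delta_{j}(\mathbf i)}\circ\Xi_{s_j}$ is Lemma \ref{prop:salt}, a substantive result of \cite{RB}; it cannot be obtained by tracking a single reflection $\widehat{s_i}$ through the Gauss decomposition, because a single tropical mutation only accounts for one $\tau$-move and cannot produce the global reversal of $\mathbf{i_2}$ that the dual move forces. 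As written, your chain of moves never leaves one component $D_{w_1}(v)$, so your argument proves only the special case (\ref{equ:taumuR}) and not the statement for arbitrary $\mathbf i,\mathbf j\in D(v)$. (Your closing uniqueness argument via Theorem \ref{thm:ev*1} is correct but not needed: the theorem only asserts existence of some intertwining birational Poisson isomorphism; well-definedness matters later, for the cluster $\mathcal X$-variety structure and the Artin action.)
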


Describing explicitly the birational Poisson isomorphism ${\widehat{\mu}}_{\mathbf{i}
\rightarrow\mathbf j}:{\mathcal X}_{[\mathbf{i}]_{\mathfrak{R}}}
\to{\mathcal X}_{[\mathbf{j}]_{\mathfrak{R}}}$ asks us to extend the given combinatorics
both on double words and on seed-$\mathcal X$-tori. Here is the construction.

\subsubsection{Combinatorics on double words}
We introduce new moves on double words in order to act transitively in $D(v)$.
From now on, let
us regard the map $i\mapsto\overline{i}$ as an involution on $[1,\ell]\cup[\overline{1},\overline{\ell}]$.

\begin{definition}
For every double word
$\mathbf i=i_1\dots i_n$, let ${\mathfrak L}_{i_1}(\mathbf i)$ (resp. ${\mathfrak R}_{i_n}(\mathbf i)$)
be the double word obtained by changing the first letter
(resp. last letter) of $\mathbf i$ such that:
$$\begin{array}{ccc}
{\mathfrak L}_{i_1}(\mathbf i)=\overline{i_1}i_2\dots i_n
&\mbox{and}&{\mathfrak R}_{i_n}(\mathbf i)=i_1\dots i_{n-1}\overline{i_n}\ .
\end{array}
$$
The involutive map $\mathbf i\mapsto {\mathfrak L}_{i_1}(\mathbf i)$ (resp.
$\mathbf i\mapsto {\mathfrak R}_{i_n}(\mathbf i)$) is called a
\emph{left $\tau$-move} (resp. \emph{right $\tau$-move}).
\end{definition}

Let us fix $w_1\leq v\in W$ and denote $D_{w_1}(v)$ the set of $(w_1,w_2)_v$-words
for every $w_2\in W$. Therefore, the set $D(v)$ is the union
over all $w_1\leq v\in W$ of the sets $D_{w_1}(v)$.

\begin{lemma}\cite{RB}\label{lemma:relPWR}
For every $\mathbf i,\mathbf j\in D_{w_1}(v)$, there exist a sequence
$\varphi_{\mathbf i\to\mathbf j}$ of generalized $d$-moves and right
$\tau$-moves such that $\mathbf j=\varphi_{\mathbf i\to\mathbf j}(\mathbf i)$.
\end{lemma}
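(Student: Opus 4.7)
The plan is to connect $\mathbf i$ and $\mathbf j$ through a canonical ``pure-negative'' representative, ultimately reducing the problem to the Tits theorem for $w_0$ read in the negative alphabet.

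First I use the definition of $D_{w_1}(v)$: each of $\mathbf i$ and $\mathbf j$ is linked, via a sequence of mixed $2$-moves (a flavor of generalized $d$-move), to a trivial form $\mathbf{i_1}\mathbf{i_2}$, respectively $\mathbf{j_1}\mathbf{j_2}$, with $\mathbf{i_1},\mathbf{j_1}\in R((w_1^{\star})^{-1},vw_1^{-1})$ and $\mathbf{i_2}\in R(w_2^{-1},w_0w_2^{-1})$, $\mathbf{j_2}\in R((w'_2)^{-1},w_0(w'_2)^{-1})$ for some $w_2,w'_2\in W$. I may therefore assume that $\mathbf i$ and $\mathbf j$ are themselves trivial. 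Since $\mathbf{i_1}$ and $\mathbf{j_1}$ are double reduced words for the same element of $W\times W$, the double-word Tits theorem recalled before Theorem~\ref{fg} provides a sequence of generalized $d$-moves acting entirely within the prefix that sends $\mathbf{i_1}$ to $\mathbf{j_1}$ without disturbing the suffix; so it suffices to connect $\mathbf{j_1}\mathbf{i_2}$ to $\mathbf{j_1}\mathbf{j_2}$.

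The heart of the argument is a reduction lemma on suffixes, which I prove by induction on $\ell(w_2)$: every trivial suffix $\mathbf{i_2}\in R(w_2^{-1},w_0w_2^{-1})$ can be transformed, by generalized $d$-moves and right $\tau$-moves, into a purely negative reduced word for $w_0$ (the case $w_2=e$). When $w_2\neq e$ I pick any left descent $s_k$ of $w_2$; positive $d$-moves on the positive subword of $\mathbf{i_2}$ reorder the positive letters so that $s_k$ is the final one; mixed $2$-moves then slide this $s_k$ past each negative letter that follows it, bringing it to the very end of $\mathbf{i_2}$; a right $\tau$-move flips it to $\overline{s_k}$, producing a trivial suffix in $R((s_kw_2)^{-1},w_0(s_kw_2)^{-1})$ whose positive part has length $\ell(w_2)-1$. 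The induction drives $w_2$ to $e$.

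Applying this reduction on both sides yields two purely negative reduced words for $w_0$, which the classical Tits theorem for $W$ connects by negative $d$-moves; reversing the reduction on the $\mathbf j$-side (legitimate because right $\tau$-moves and generalized $d$-moves are involutive) assembles the desired sequence $\varphi_{\mathbf i\to\mathbf j}$. The main obstacle is the inductive step: I must check that the chosen $s_k$ is genuinely a left descent of the current~$w_2$ (which is automatic since any $w_2\neq e$ has at least one) and that the word produced after the $\tau$-move is a bona fide trivial $(w_1,s_kw_2)_v$-word. The latter amounts to checking that appending $s_k$ to a reduced word for $w_0w_2^{-1}$ remains reduced, which follows directly from the length identity $\ell(w_0u)=\ell(w_0)-\ell(u)$ together with the characterization of $s_k$ as a right descent of $w_2^{-1}$.
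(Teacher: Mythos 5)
Your overall architecture is sound, and it is very likely the intended one (note that the paper itself does not reprove this lemma but cites \cite{RB}, so I can only judge against the paper's definitions): pass to trivial forms by mixed $2$-moves, match the prefixes with the generalized Tits theorem, reduce each suffix to a single-sign reduced word of $w_0$ by an induction that flips the last letter with a right $\tau$-move, finish with the classical Tits theorem, and reverse the (involutive) moves on one side. However, your inductive step fails as written because the positive and negative roles are swapped relative to the paper's conventions. In this paper $R(u,v)$ consists of shuffles of a reduced word of $u$ in the \emph{negative} alphabet with a reduced word of $v$ in the \emph{positive} alphabet; hence in the suffix $\mathbf{i_2}\in R(w_2^{-1},w_0w_2^{-1})$ the negative letters spell $w_2^{-1}$ and the positive letters spell $w_0w_2^{-1}$. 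If $s_k$ is a left descent of $w_2$, then $\ell(w_0w_2^{-1}s_k)=\ell(w_0)-\ell(s_kw_2)=\ell(w_0w_2^{-1})+1$, so no reduced word of $w_0w_2^{-1}$ ends in the letter $k$, and no sequence of positive $d$-moves can bring $k$ to the end of the positive subword: the move you describe cannot be performed. The same confusion shows in your terminal case: $w_2=e$ gives the purely \emph{positive} suffix (an element of $R(1,w_0)$), while the purely negative one corresponds to $w_2=w_0$.

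This is a consistent relabeling error rather than a missing idea, and either of two repairs works. (i) Keep your choice of a left descent $s_k$ of $w_2$, but reorder the \emph{negative} subword --- a reduced word of $w_2^{-1}$, which does admit an expression ending in $\overline{k}$ since $s_k$ is a right descent of $w_2^{-1}$ --- slide $\overline{k}$ to the very end by mixed $2$-moves, and flip $\overline{k}\mapsto k$; the new suffix lies in $R((s_kw_2)^{-1},w_0(s_kw_2)^{-1})$ because appending $s_k$ to a reduced word of $w_0w_2^{-1}$ stays reduced, and $\ell(w_2)$ drops by one, so the induction terminates at the purely positive word. (ii) Keep your move (flip a terminal positive letter $k$ to $\overline{k}$), but require $\ell(s_kw_2)>\ell(w_2)$, equivalently that $s_k$ be a right descent of $w_0w_2^{-1}$; then $\ell(w_2)$ increases and the induction terminates at $w_2=w_0$, the purely negative word. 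Tellingly, your own final verification paragraph --- appending $s_k$ to a reduced word of $w_0w_2^{-1}$ is reduced because $\ell(w_0u)=\ell(w_0)-\ell(u)$ and $s_k$ is a right descent of $w_2^{-1}$ --- is exactly the length check for repair (i), not for the move described in your induction, which confirms that the two halves of your argument are using opposite conventions. A last, minor point: reordering a single-sign subword by $d$-moves may require interleaved mixed $2$-moves to make the relevant letters adjacent inside the double word; this is harmless but should be said. With repair (i) or (ii) inserted, your proof goes through.
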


\begin{definition}
Let $\mathbf i=i_1\dots i_m\in R(1,w_0)\cup R(w_0,1)$ be a positive or
negative reduced word associated to $w_0$ and $\mathbf j=j_1\dots j_n$ be a
double word. The following \emph{dual-move} $\Delta_{j}$ associated to the
last letter of $\mathbf j$ transforms the product $\mathbf j\mathbf i$ into
the double word
$$\Delta_{j}:\mathbf j\mathbf i\mapsto j_1\dots j_{n-1}\overline{j_n}^{\star}\
\overline{i_m}\dots \overline{i_1}\ ,$$
where $j=j_n$ if $j_n$ positive and $j=\overline{j_n}^{\star}$
if $j_n$ negative. It is easy to see that $\Delta_{j}\circ \Delta_{j}$ is the identity map.
\end{definition}

\begin{definition}
Let $v\in W$ and $\mathbf i\in D(v)$ be a double word. A \emph{$\widehat{d}$-move}
on $\mathbf i$ is a generalized $d$-move; or a right ${\tau}$-move; or
a dual-move $\Delta_{i}$.
\end{definition}

Here is an analog of the Tits theorem for the set $D(v)$.

\begin{lemma}\cite{RB} If $\mathbf i,\mathbf j\in D(v)$ then there exists a sequence
of $\widehat{d}$-moves relating $\mathbf i$ and $\mathbf j$.
\end{lemma}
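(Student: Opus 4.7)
The plan is to combine Lemma \ref{lemma:relPWR}, which already handles $\widehat{d}$-connectivity inside each stratum $D_{w_1}(v)$, with dual-moves to pass between different strata. Since generalised $d$-moves and right $\tau$-moves are $\widehat{d}$-moves, Lemma \ref{lemma:relPWR} directly links any two words in a common $D_{w_1}(v)$. As $D(v)=\bigcup_{w_1\leq v}D_{w_1}(v)$, it therefore suffices to show that for every pair $w_1,w_1'\leq v$ some element of $D_{w_1}(v)$ can be $\widehat{d}$-moved to some element of $D_{w_1'}(v)$. The Hasse diagram of the interval $[1,v]$ in the weak order is connected---every $u\leq v$ is linked to $v$ through the chain $v\to\dots\to u$---so an induction on the Hasse distance reduces us to the case where $w_1$ and $w_1'$ are cover-related, say $w_1'=s_kw_1$ with $\ell(w_1')=\ell(w_1)+1$ and both $\leq v$.

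Under this cover hypothesis the plan is to realise the transition by a single dual-move $\Delta_k$. First, the reduced factorisations $(vw_1^{-1})w_1=v$ and $v(w_1')^{-1}w_1'=v$ granted by $w_1,w_1'\leq v$ yield
\[ \ell(vw_1^{-1}\,s_k)=\ell(v(w_1')^{-1})=\ell(v)-\ell(w_1')=\ell(vw_1^{-1})-1, \]
so $s_k$ is a right descent of $vw_1^{-1}$. I can then pick a reduced word for $vw_1^{-1}$ ending in $s_k$, shuffle it with any reduced word for $(w_1^{\star})^{-1}$ to obtain some $\mathbf i_1\in R((w_1^{\star})^{-1},vw_1^{-1})$ whose last letter is the positive $k$, and concatenate with any positive reduced word $\mathbf i_2\in R(1,w_0)$ for $w_0$. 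The result is a trivial $(w_1,1)_v$-word, and by Lemma \ref{lemma:relPWR} any element of $D_{w_1}(v)$ can be $\widehat{d}$-moved to this form (right $\tau$-moves force $w_2=1$ and generalised $d$-moves bring $s_k$ to the end of $\mathbf i_1$).

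Applying $\Delta_k$ then replaces the last letter $k$ of $\mathbf i_1$ by $\overline{k}^{\star}$ and $\mathbf i_2$ by its barred reverse $\mathbf i_2'\in R(w_0,1)$. A direct reading off of positive and negative subwords---using that $i\mapsto i^{\star}$ is an automorphism of $W$, so that $(s_kw_1)^{\star}=s_{k^{\star}}w_1^{\star}$---shows the new prefix $\mathbf i_1'$ has positive subword a reduced word for $vw_1^{-1}s_k=v(w_1')^{-1}$ and negative subword a reduced word for $(w_1^{\star})^{-1}s_{k^{\star}}=((w_1')^{\star})^{-1}$. Hence $\mathbf i_1'\in R(((w_1')^{\star})^{-1},v(w_1')^{-1})$ and the transformed word is a trivial $(w_1',w_0)_v$-word lying in $D_{w_1'}(v)$, providing the required cover transition. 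The reverse direction of a cover relation is then free: $\widehat{d}$-moves are involutive, so the same step read backwards implements the descending cover.

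The only non-trivial verification is the descent identity displayed above, together with the check that the new subwords really are reduced (not just of the correct underlying element); both are forced by the length matching $\ell(v)=\ell(v(w_1')^{-1})+\ell(w_1')$. Everything else is straightforward bookkeeping on positive and negative subwords. Once this is granted, Lemma \ref{lemma:relPWR} and the connectedness of the Hasse diagram of $[1,v]$ assemble the covering transitions into a global $\widehat{d}$-move sequence connecting $\mathbf i$ and $\mathbf j$.
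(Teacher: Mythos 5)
The paper itself does not prove this lemma --- it is quoted from the prequel \cite{RB} --- so there is no in-paper argument to compare against; your proposal must therefore stand on its own, and it does. Your argument is correct and uses exactly the toolkit the paper supplies: Lemma \ref{lemma:relPWR} gives $\widehat{d}$-connectivity inside each stratum $D_{w_1}(v)$, and your cover step (pass to a trivial $(w_1,1)_v$-word whose prefix $\mathbf{i_1}$ ends in the positive letter $k$, apply the dual-move $\Delta_k$, and check that the image is a trivial $(w_1',w_0)_v$-word) correctly crosses between strata; the two verifications on which this hinges --- that $s_k$ is a right descent of $vw_1^{-1}$, and that the extended barred subword is a \emph{reduced} word for $((w_1')^{\star})^{-1}$ --- do follow from the length identities $\ell(vw_1^{-1})=\ell(v)-\ell(w_1)$, $\ell(v(w_1')^{-1})=\ell(v)-\ell(w_1')$ encoded in $w_1,w_1'\leq v$, together with $\star$ being a length-preserving automorphism of $W$ (conjugation by $w_0$). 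Combined with the involutivity of all three kinds of $\widehat{d}$-moves and the connectedness of the weak-order interval $[1,v]$ through its top element, this assembles into a complete proof, and it is in all likelihood essentially the argument of \cite{RB}.
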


\subsubsection{Tropical mutations}
\label{section:tropical}
The $\tau$-moves
lead to a new type of mutations on seed $\mathcal X$-tori, called
{tropical mutations} and defined in the following way.
Let $b_{ij}$ be the numerator of $\varepsilon_{ij}$ for every $i,j\in I$;
so we have $b_{ij}=\varepsilon_{ij}$ unless $i,j\in I_0$. Let us
suppose that the denominator of $\varepsilon_{ij}$ is the same for every
$i,j\in I_0$. (Using the formulas (\ref{equ:exeps}) and (\ref{equ:amaleps}),
it is clear that it is the case for any seed $\mathbf I(\mathbf i)$ associated
to a double word $\mathbf i$.)

\begin{definition}Let $\mathbf I=(I_0,I,\varepsilon,d)$ be a seed
such that $I_0$ is not empty.
A \emph{cover} $\mathfrak{C}$ on $\mathbf I$ is a family of sets
$I_1,\dots,I_n\subset I_0$ such that $I_0=\cup_{i=1}^{n}I_i$.
(The union is not necessary disjoint.)
For every $k\in I_0$, we denote $I_0(k)$ the union
$$I_0(k):=\bigcup_{\{i\mid k\in I_i\}}I_i\ .$$
\end{definition}

\begin{definition}\label{def:trop}
Let $\mathbf{I}=(I,I_0,\varepsilon,d)$ and $\mathbf {I'}=(I',{I'}_0,{\varepsilon}',d')$
be two seeds with covers, and $k\in I_0$. A $\emph{tropical mutation}$ $\emph{in the direction k}$
is an involution $\mu_k:\mathbf I\longrightarrow\mathbf{I'}$
satisfying the following conditions:

(i)$\mu_k(I_0(i))={I'}_0(i)$;

(ii) ${d'}_{\mu_k(i)}=d_i$;

{(iii)$$
\varepsilon'_{\mu_k(i)\mu_k(j)} =
\begin{cases}
-\varepsilon_{ij} & \text{if $i=k$ or $j=k$;} \\[.05in]
\varepsilon_{ij}
& \text{if $i,j\in I_0(k)\backslash\{k\}$};\\
\varepsilon_{ij} -\varepsilon_{ik}b_{kj} &\text{otherwise.}
\end{cases}
$$
Tropical mutations induce involutive maps between the corresponding seed $\mathcal X$-tori,
which are denoted by the same symbols $\mu_k$ and given by
\begin{equation}\label{equ:formuletropmut}
x_{\mu_k(i)}=\left\{ \begin{array}{lll}
{x_k}^{-1}& \mbox{if}\ i=k;\\
x_ix_k^{b_{ki}}& \mbox{if}\ i\in I_0(k)\backslash\{k\};\\
x_i& \mbox{otherwise}.
\end{array}\right.
\end{equation}}
For the remaining part of the paper, we suppose mutations and symmetries respect covers on seeds.
A \emph{generalized cluster transformation} linking two seeds
(and two seed $\mathcal X$-tori) is then a composition of symmetries, mutations, and
tropical mutations.
\end{definition}

Let $\mathbf i$ be a double word and recall the subsets
$I_0^{\mathfrak{L}}(\mathbf i)$ and $I_0^{\mathfrak{R}}(\mathbf i)$ of $I_0(\mathbf i)$
defined in Subsection \ref{section:clusterpiG}. From now on, the seed $\mathbf{I(i)}=
(I(\mathbf i),I_0(\mathbf i),\varepsilon(\mathbf i),d(\mathbf i))$ is given with the cover
$$I_0(\mathbf i)=I_0^{\mathfrak{L}}(\mathbf i)\cup I_0^{\mathfrak{R}}(\mathbf i)\ .$$

\begin{lemma}\cite[Proposition 5.11]{RB}\label{prop:muttrop} The following tropical mutations
are Poisson birational isomorphisms for every double word $\mathbf i=i_1\dots i_n$.
They are called respectively \emph{left} and \emph{right tropical mutations}.
$$\begin{array}{lcr}
\mu_{\binom {i_1}{0}}:{\mathcal X}_{\mathbf i}\rightarrow {\mathcal X}_{{\mathfrak L}_{i_1}(\mathbf i)}
&\mbox{and}&
\mu_{\binom {i_n}{N^{i_n}(\mathbf i)}}:{\mathcal X}_{\mathbf i}\rightarrow {\mathcal X}_{{\mathfrak R}_{i_n}(\mathbf i)}\ .
\end{array}
$$
\end{lemma}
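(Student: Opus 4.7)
The plan is to prove the statement for the left tropical mutation $\mu_{\binom{i_1}{0}}$ in detail; the right tropical mutation $\mu_{\binom{i_n}{N^{i_n}(\mathbf{i})}}$ will follow by a symmetric argument (applied at the last letter of $\mathbf{i}$ rather than the first). Setting $k:=\binom{i_1}{0}$, the task splits into two checks: first, that the two seeds $\mathbf{I}(\mathbf{i})$ and $\mathbf{I}(\mathfrak{L}_{i_1}(\mathbf{i}))$ are related, under the natural identification of the underlying index sets $I(\mathbf{i}) = I(\mathfrak{L}_{i_1}(\mathbf{i}))$, by the combinatorial tropical-mutation rules (i)--(iii) of Definition \ref{def:trop}; and second, that the monomial transformation (\ref{equ:formuletropmut}) is Poisson with respect to the log-canonical brackets on each side.

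Conditions (i) and (ii) are immediate, since changing $i_1$ to $\overline{i_1}$ does not alter the underlying positions, the covers $I_0 = I_0^{\mathfrak{L}}\cup I_0^{\mathfrak{R}}$ on either side, or the values $d_{\binom{j}{k}} = d_j$. Note that $k$ belongs to $I_0^{\mathfrak{L}}(\mathbf{i})$, so $I_0(k) = I_0^{\mathfrak{L}}(\mathbf{i})$. For condition (iii), I would write $\mathbf{i} = i_1 \mathbf{j}'$ and apply the amalgamation formula (\ref{equ:amaleps}) to express both $\varepsilon(\mathbf{i})$ and $\varepsilon(\mathfrak{L}_{i_1}(\mathbf{i})) = \varepsilon(\overline{i_1}\mathbf{j}')$ in terms of the elementary matrices $\varepsilon(i_1)$, $\varepsilon(\overline{i_1})$, and the common tail $\varepsilon(\mathbf{j}')$. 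By the elementary formula (\ref{equ:exeps}), $\varepsilon(\overline{i_1}) = -\varepsilon(i_1)$ on its few nonzero entries, which involve only the indices $\binom{i_1}{1}$, $\binom{i_1}{0}$ and $\binom{j}{0}$ for $j\in[1,\ell]$. Consequently the two amalgamated matrices differ only on those entries, and a case analysis --- separating whether the row/column indices equal $k$, lie in $I_0^{\mathfrak{L}}(\mathbf{i})\setminus\{k\}$, or lie elsewhere --- should identify their difference with the prescription $-\varepsilon_{ik}b_{kj}$ of the third branch of (iii), while the sign flip for $i=k$ or $j=k$ corresponds directly to the first branch.

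Given the combinatorial identity, the Poisson property of (\ref{equ:formuletropmut}) follows from the standard principle that a monomial change of variables $y_i = \prod_j x_j^{M_{ij}}$ between two log-canonical tori is Poisson if and only if $M\widehat{\varepsilon}M^{T}$ equals the bracket matrix on the target. Reading off $M$ directly from (\ref{equ:formuletropmut}) and expanding $M\widehat{\varepsilon}(\mathbf{i})M^{T}$ returns precisely the matrix produced by the tropical-mutation rule, as verified in the previous step. The map is birational because it is involutive: squaring the formulas gives back the identity.

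The main obstacle will be the bookkeeping in step (iii). The entry $\varepsilon(\mathbf{i})_{\binom{i_1}{0},\binom{j}{0}}$ may pick up amalgamation contributions from every later occurrence of the letter $i_1$ or $j$, and one must check that these survive unchanged in $\varepsilon(\mathfrak{L}_{i_1}(\mathbf{i}))$ while exactly the local contribution of the first factor flips sign. This locality --- that only the leading elementary block is sensitive to the left $\tau$-move --- is ultimately what makes the entire argument go through, and it is the delicate point of the proof.
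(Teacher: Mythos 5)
This lemma is not proved in the present paper at all: it is imported from the prequel as \cite[Proposition 5.11]{RB}, so there is no internal proof to compare you against and your attempt must stand on its own. Its skeleton is the right one, and its key observation is correct: writing $\mathbf i=i_1\mathbf j'$, the amalgamation rule (\ref{equ:amaleps}) localizes the effect of the left $\tau$-move to the leading elementary block, since $\varepsilon(\overline{i_1})=-\varepsilon(i_1)$ by (\ref{equ:exeps}) and every entry of $\varepsilon(\mathbf i)$ involving $\binom{i_1}{0}$ receives contributions from that block only. But the proposal stops exactly where the proof begins. The identification of the difference of the two amalgamated matrices with rule (iii) of Definition \ref{def:trop} is only asserted to ``should'' work, and this step is not routine: it hinges on the precise numerator convention for $b$. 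For instance, for $\mathbf i=121$ in type $A_2$, $k=\binom{1}{0}$, $i=\binom{2}{0}$, $j=\binom{1}{1}$, one computes $\varepsilon_{ij}=1$, $\varepsilon_{ik}=-1/2$, $\varepsilon_{kj}=-1$, while the target entry is $\varepsilon'_{ij}=0$; the third branch of (iii) reproduces this only if $b_{kj}=-2$ (numerator over the common denominator $2$), whereas the literal reading of the text ($b_{kj}=\varepsilon_{kj}=-1$, since $j\notin I_0$) gives $1/2\neq 0$. A proof must pin down and use the correct convention; an unexecuted ``case analysis'' cannot.

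The second, more serious, problem is a claim that is wrong as stated: that the Poisson property then ``follows from the standard principle,'' the expansion of $M\widehat{\varepsilon}(\mathbf i)M^{T}$ returning ``precisely the matrix produced by the tropical-mutation rule, as verified in the previous step.'' Rule (iii) and Poisson-compatibility of the monomial map (\ref{equ:formuletropmut}) are logically independent conditions, and their agreement is an extra arithmetic identity. Indeed, for $i,j\in I_0(k)\setminus\{k\}$ rule (iii) demands an unchanged entry, while the bracket of $x_ix_k^{b_{ki}}$ and $x_jx_k^{b_{kj}}$ produces the additional term
$$b_{kj}\widehat{\varepsilon}_{ik}+b_{ki}\widehat{\varepsilon}_{kj}
=\frac{b_{kj}}{D}\bigl(b_{ik}d_k+b_{ki}d_j\bigr)
=\frac{b_{ki}b_{kj}}{D}\,(d_j-d_i)\ ,$$
where $D$ is the common denominator and the last equality uses $b_{ik}d_k=-b_{ki}d_i$ (skew-symmetry of $\widehat\varepsilon$). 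This vanishes identically in the simply-laced case, but not otherwise: for a double word beginning with the letter $2$ in type $B_3$, take $i=\binom{1}{0}$, $j=\binom{3}{0}$; then $b_{ki}b_{kj}\neq0$ and $d_1\neq d_3$. So the Poisson verification is a second, separate computation in which the specific arithmetic of the double-word seeds (entries $\pm a_{pq}/2$, the symmetrizer relation on the Cartan matrix, the exact definition of the exponents) must enter; it cannot be subsumed into the combinatorial step, and your argument never invokes any of it. The same convention-sensitivity undoes the closing birationality argument: composing (\ref{equ:formuletropmut}) with itself, with $b'_{ki}=-b_{ki}$ forced by the sign flip in rule (iii), gives $x_i\mapsto x_ix_k^{2b_{ki}}$ rather than the identity, so ``it is involutive because squaring the formulas gives the identity'' begs the question; birationality is better deduced from the fact that the map is monomial and triangular with invertible exponent matrix.
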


Left and right tropical mutations are related to the geometry of the group $G$ in the following way.

\begin{lemma}\cite[Proposition 5.24]{RB}\label{equ:trop} The following equalities
are satisfied for every $u,v\in W$, every $(u,v)$-adapted double word
$\mathbf i=i_1\dots i_n\in R(u,v)$, and every $\mathbf x\in{\mathcal X}_{\mathbf i}$.
$$\begin{array}{ccc}
[\ev_{\mathbf i}(\mathbf x)\widehat{v^{-1}}]_{\leq 0}&=&
[\ev_{{\mathfrak R}_{i_n}(\mathbf i)}\circ\mu_{\binom {i_n}{N^{i_n}(\mathbf i)}}
(\mathbf x)\widehat{s_{i_n}v^{-1}}]_{\leq 0}\\
{[\widehat{u}^{-1}\ev_{\mathbf i}(\mathbf x)]_{\geq 0}}&=&
[\widehat{s_{i_1}u}^{-1}\ev_{{\mathfrak L}_{i_1}(\mathbf i)}\circ\mu_{\binom {i_1}{0}}
(\mathbf x)]_{\geq 0}\ .
\end{array}$$
\end{lemma}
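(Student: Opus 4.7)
The plan is to prove the first identity (right tropical mutation); the second follows by the same argument with the roles of left and right Gauss factors swapped, or equivalently by applying the Cartan involution $\theta$, which takes $[\cdot]_{\leq 0}$ to $[\cdot]_{\geq 0}$ and exchanges $E^i \leftrightarrow F^i$. The idea is to reduce the statement to a local $SL_2$ calculation at the last letter $i_n$: the right tropical mutation $\mu_k$ with $k = \binom{i_n}{N^{i_n}(\mathbf i)}$ turns out to act \emph{locally} on $\ev_{\mathbf i}$, modifying only the terminal coordinate $b := x_k$.

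To set this up, decompose $\mathbf i = \mathbf p \cdot (i_n)$ with $\mathbf p = i_1 \dots i_{n-1}$. Choosing the lift of the amalgamation $\mathcal X_{\mathbf p} \times \mathcal X_{(i_n)} \to \mathcal X_{\mathbf i}$ for which the interface variables $z_{\binom{j}{0}}$ of the $(i_n)$-factor all equal $1$, one obtains $\ev_{\mathbf i}(\mathbf x) = A \cdot E^{i_n} H^{i_n}(b)$ when $i_n$ is positive (or $A \cdot F^{i_n} H^{i_n}(b)$ when $i_n$ is negative), with $A$ depending only on the coordinates of $\mathbf x$ other than $b$. The locality claim is that $\mu_k$ sends $b$ to $b^{-1}$ and fixes every other coordinate. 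Since $k$ belongs only to $I_0^{\mathfrak R}(\mathbf i)$, one has $I_0(k) = I_0^{\mathfrak R}(\mathbf i)$; a case analysis of the amalgamation formula (\ref{equ:amaleps}) shows that $\varepsilon(\mathbf i)_{k, \binom{j}{N^j(\mathbf i)}} = 0$ for every $j \neq i_n$, because the pair of indices falls into the ``otherwise $=0$'' branch of (\ref{equ:amaleps}). Thus $\ev_{{\mathfrak R}_{i_n}(\mathbf i)}(\mathbf x') = A \cdot F^{i_n} H^{i_n}(b^{-1})$ (respectively $A \cdot E^{i_n} H^{i_n}(b^{-1})$). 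A direct matrix computation inside $\varphi_{i_n}(SL(2,\mathbb C))$ then yields the local $SL_2$ identity
\begin{equation*}
E^{i_n} H^{i_n}(b)\, \widehat{s_{i_n}} = F^{i_n} H^{i_n}(b^{-1}) \cdot U_b,
\end{equation*}
where $U_b$ is an explicit upper-unipotent element, with the analogous identity swapping $E$ and $F$ handling the negative case.

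Combining these ingredients, for $i_n$ positive write $v = v' s_{i_n}$, a reduced decomposition since $\mathbf i \in R(u, v)$. Then $\widehat{v^{-1}} = \widehat{s_{i_n}} \widehat{(v')^{-1}}$ (as $s_{i_n}(v')^{-1}$ is reduced) and $\widehat{s_{i_n} v^{-1}} = \widehat{(v')^{-1}}$, so
\begin{equation*}
\ev_{\mathbf i}(\mathbf x)\, \widehat{v^{-1}} = A \cdot F^{i_n} H^{i_n}(b^{-1}) \cdot U_b\, \widehat{(v')^{-1}} = \ev_{{\mathfrak R}_{i_n}(\mathbf i)}(\mathbf x')\, \widehat{s_{i_n} v^{-1}} \cdot U',
\end{equation*}
with $U'$ the conjugate of $U_b$ by $\widehat{(v')^{-1}}$. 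Reducedness of $v' s_{i_n}$ forces $v'(\alpha_{i_n}) > 0$, so this conjugate is the exponential of a positive-root vector, whence $U'$ is upper-unipotent. Since right multiplication by an upper-unipotent element does not affect the Gauss component $[\cdot]_{\leq 0}$, the identity follows. The main obstacle is the locality claim: it requires carefully tracking the amalgamation formula together with the cover structure on $I_0(\mathbf i)$, ensuring no cross-coupling between $k$ and the other boundary indices. The ``$(u, v)$-adapted'' hypothesis is used to produce the reduced decomposition powering the Weyl-positivity argument in whichever sign case arises.
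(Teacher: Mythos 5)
Your proof skeleton --- peel off the last letter, apply an $SL_2$-type identity at the end of the word, and dispose of the leftover unipotent factor inside $[\,\cdot\,]_{\leq 0}$ via the positivity $v'(\alpha_{i_n})>0$ --- is a reasonable one, and that final positivity step is correct. (Note that the present paper contains no proof of this lemma: it is imported from the prequel [RB, Proposition 5.24], so your attempt can only be measured against what a correct argument must contain.) However, the two computational claims on which your reduction rests are both false as soon as $\mathfrak g$ has rank at least $2$, so as written the argument proves the lemma only when $i_n$ is an isolated node of the Dynkin diagram, e.g.\ for $G=\PGL(2,\mathbb C)$.

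The fatal point is the locality claim. The entry $\varepsilon(\mathbf i)_{k,\binom{j}{N^j(\mathbf i)}}$, with $k=\binom{i_n}{N^{i_n}(\mathbf i)}$, is \emph{not} zero when $a_{i_nj}\neq 0$: the vertex $\binom{j}{N^j(\mathbf i)}$ is a glued vertex, so it belongs to the terminal elementary seed as well (it is its vertex $\binom{j}{0}$), and that seed contributes $a_{i_nj}/2$ to the entry. Your case analysis of (\ref{equ:amaleps}) treats a glued vertex as if it lay only in the prefix; read that literally, (\ref{equ:amaleps}) would also contradict the paper's own $PGL_2$ data (the matrix $\eta(11)$ of Section \ref{section:SL2} has entry $-1$ in position $\bigl(\binom{1}{0},\binom{1}{1}\bigr)$, a pair the same literal reading would annihilate) and would make amalgamation fail to be a Poisson map. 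Consequently the right tropical mutation genuinely rescales the neighbouring boundary coordinates $x_{\binom{j}{N^j(\mathbf i)}}$ by powers of $x_k$, exactly as the middle case of (\ref{equ:formuletropmut}) says, and your locality claim contradicts the lemma itself: in type $A_2$ with $\mathbf i=12$, $u=e$, $v=s_1s_2$ and $(p,q,r,s)=(x_{\binom{1}{0}},x_{\binom{2}{0}},x_{\binom{1}{1}},x_{\binom{2}{1}})$, a direct matrix computation of both sides shows the equality holds only if the coordinate replacing $r$ on the mutated torus is $rs$, whereas your map keeps $r$ unchanged.

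Your local $SL_2$ identity fails for a matching reason: $H^{i_n}(b)$ is the exponential of the fundamental coweight $h^{i_n}=\tfrac12 h_{i_n}-\tfrac12\sum_{j\neq i_n}a_{ji_n}h^j$, not of the coroot $h_{i_n}$, so the correct identity in $G$ reads
$$E^{i_n}H^{i_n}(b)\,\widehat{s_{i_n}}=F^{i_n}H^{i_n}(b^{-1})\prod_{j\neq i_n}H^j(b^{-a_{ji_n}})\;x_{i_n}(-b)\ ,$$
with nontrivial Cartan factors attached to the Dynkin neighbours of $i_n$. These factors are precisely what the non-local part of the tropical mutation absorbs: $H^j(b^{-a_{ji_n}})$ commutes with every factor of $\ev_{\mathbf i}$ standing between the last occurrence of the letter $j$ and the end of the word, and merges with the factor $H^j(x_{\binom{j}{N^j(\mathbf i)}})$, rescaling that coordinate exactly as (\ref{equ:formuletropmut}) prescribes. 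In short, your two errors are compensating --- repairing both along these lines yields a correct proof with your overall structure --- but as written each of your two key displayed identities is wrong, and the issue you yourself single out as ``the main obstacle'' is resolved incorrectly.
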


This result is the first step to get the combinatorics of the maps
$b\mapsto[b\widehat{v^{-1}}]_{\leq 0}$ and $c\mapsto[\widehat{u}^{-1}c]_{\geq 0}$},
with $b\in G^{1,v}$ and $c\in G^{u,1}$. To do that, we need more notations.
For every positive reduced word $\mathbf i=i_1\dots i_n$, every  negative reduced word
$\mathbf j=\overline{j_1}\dots \overline{j_n}$, and every $k\in[1,n+1]$, we then introduce
the double words
\begin{equation}\label{equ:xiword}
\begin{array}{ccllll}
\mathbf i(k)=\mathbf i(k)_-\mathbf i(k)_+& \mbox{where}&\mathbf i(k)_+
=i_1\dots i_{k-1} &\mbox{and}& \mathbf i(k)_-=\overline{i_n}\dots\overline{i_{k}}\\
\mathbf j(k)=\mathbf j(k)_-\mathbf j(k)_+& \mbox{where}&\mathbf j(k)_+
=j_{k-1}\dots j_{1} &\mbox{and}& \mathbf j(k)_-=\overline{j_{k}}\dots\overline{j_{n}}
\end{array}.
\end{equation}
In particular, we will use the notations
$$\begin{array}{ccc}
\mathbf{i^{\square}}:=\mathbf i(1)=
\overline{i_n}\dots\overline{i_{1}}& \mbox{and}& \mathbf{j^{\square}}:=\mathbf j(n+1)=
{j_n}\dots{j_{1}}\ .
\end{array}$$
For every positive reduced word
$\mathbf i=i_1\dots i_n$, every  negative reduced word
$\mathbf j=\overline{j_1}\dots \overline{j_n}$, and every $k\in[1,n]$, we define the
generalized cluster transformations $\zeta_{\mathbf i(k)}:{\mathcal X}_{\mathbf i(k)}
\to{\mathcal X}_{\mathbf i(j-1)}$, $\zeta_{\mathbf i}:
{\mathcal X}_{\mathbf{i}}\to{\mathcal X}_{\mathbf{i^{\square}}}$, and $\zeta_{\mathbf j(k)}:{\mathcal X}_{\mathbf j(k)}
\to{\mathcal X}_{\mathbf j(k-1)}$, $\zeta_{\mathbf j}:
{\mathcal X}_{\mathbf{j}}\to{\mathcal X}_{\mathbf{j^{\square}}}$  by the following formulas
\begin{equation}\label{equpartialzeta}
\begin{array}{lcc}
\zeta_{\mathbf i(k)}=\mu_{\binom{i_k}{N^{i_k}(\mathbf i(k)_-)}}\circ\dots
\circ
\mu_{\binom{i_k}{N^{i_k}(\mathbf i)-1}}\circ\mu_{\binom{i_k}{N^{i_k}(\mathbf i)}}&
\mbox{and}&\zeta_{\mathbf i}=\zeta_{\mathbf i(1)}\circ\dots\circ\zeta_{\mathbf i(n)}\\
\\
\zeta_{\mathbf j(k)}=\mu_{\binom{j_k}{N^{j_k}(\mathbf j(k)_-)-1}}\circ\dots
\circ\mu_{\binom{j_k}{1}}\circ
\mu_{\binom{j_k}{0}}&
\mbox{and}&\zeta_{\mathbf j}=\zeta_{\mathbf j(n)}\circ\dots\circ\zeta_{\mathbf j(1)}
\end{array}\ .
\end{equation}

The following result is easily deduced from Theorem \ref{fg} and Lemma \ref{equ:trop}.

\begin{cor}\label{lemma:twist} Let $u,v\in W$, $\mathbf i\in R(1,v)$ and
$\mathbf j\in R(u,1)$. For every $\mathbf x\in{\mathcal X}_{\mathbf{i}},
\mathbf y\in{\mathcal X}_{\mathbf{j}}$ the following equalities are satisfied:
\begin{equation}\label{equ:torev}
\begin{array}{ccc}
[\ev_{\mathbf i}(\mathbf x)\widehat{v^{-1}}]_{\leq 0}=\ev_{\mathbf{i^{\square}}
}\circ\zeta_{\mathbf i}(\mathbf x)&\mbox{and}&
{[\widehat{u}^{-1}\ev_{\mathbf j}(\mathbf y)]_{\geq 0}}
=\ev_{\mathbf{j^{\square}}}\circ\zeta_{\mathbf j}(\mathbf y)\ .
\end{array}
\end{equation}
\end{cor}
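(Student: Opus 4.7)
The plan is to prove the first equality by induction on $n := \ell(v)$; the second follows by a symmetric argument, iterating the left tropical mutation identity of Lemma~\ref{equ:trop} on the leftmost letter of $\mathbf j$. The base case $n=0$ is immediate: both sides reduce to $\ev_{\mathbf 1}(\mathbf x)\in H$, on which the Gauss projection $[\,\cdot\,]_{\leq 0}$ acts as the identity.

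For the inductive step with $\mathbf i = i_1\dots i_n \in R(1,v)$, I would first apply the right tropical mutation identity of Lemma~\ref{equ:trop} to flip the last letter $i_n$ to $\overline{i_n}$:
\[
[\ev_{\mathbf i}(\mathbf x)\widehat{v^{-1}}]_{\leq 0} = [\ev_{\mathbf i'}(\mu_{\binom{i_n}{N^{i_n}(\mathbf i)}}(\mathbf x))\,\widehat{s_{i_n}v^{-1}}]_{\leq 0},\qquad \mathbf i' = i_1\dots i_{n-1}\overline{i_n}.
\]
Then I would transport $\overline{i_n}$ leftward past $i_{n-1},\dots,i_1$ via a chain of mixed $2$-moves, arriving at the word $\widetilde{\mathbf i} := \overline{i_n}\,i_1\dots i_{n-1}$. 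By Theorem~\ref{fg}, each such $2$-move leaves the evaluation invariant up to a cluster transformation: the identity whenever the two interchanged indices differ, and a single mutation $\mu_{\binom{i_n}{r}}$ in the case $i_n\,\overline{i_n}\leftrightarrow\overline{i_n}\,i_n$.

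Amalgamation now gives $\ev_{\widetilde{\mathbf i}}(\mathbf z) = \ev_{\overline{i_n}}(\mathbf z_-)\cdot\ev_{i_1\dots i_{n-1}}(\mathbf z_+)$; since $\ev_{\overline{i_n}}(\mathbf z_-)\in B_-$, it commutes through $[\,\cdot\,]_{\leq 0}$, reducing the computation to $[\ev_{i_1\dots i_{n-1}}(\mathbf z_+)\,\widehat{(vs_{i_n})^{-1}}]_{\leq 0}$, where $i_1\dots i_{n-1}\in R(1,vs_{i_n})$ has length $n-1$. The induction hypothesis applies, and re-amalgamating via the identity $\overline{i_n}\cdot(i_1\dots i_{n-1})^{\square} = \mathbf{i^{\square}}$ yields the desired formula, with total cluster transformation equal to $\zeta_{i_1\dots i_{n-1}}$ composed with the mutations produced in the first two steps.

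The hard part will be identifying this composite with $\zeta_{\mathbf i}$ as defined in~(\ref{equpartialzeta}): one must verify that the telescoping product of the mutations accumulated as $\overline{i_n}$ migrates leftward assembles into $\mu_{\binom{i_n}{N^{i_n}(\mathbf i(n)_-)}}\circ\cdots\circ\mu_{\binom{i_n}{N^{i_n}(\mathbf i)}}$. Each crossing of a positive letter equal to $i_n$ drops the index $r$ by one, while crossings of other letters leave $r$ unchanged, matching exactly the successive values. Together with the recursive structure of~(\ref{equpartialzeta}), this identifies the entire composite with $\zeta_{\mathbf i}$ and completes the induction.
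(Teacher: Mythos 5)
Your proof is correct and takes essentially the paper's route: the paper offers no written argument beyond asserting the corollary is ``easily deduced from Theorem \ref{fg} and Lemma \ref{equ:trop}'', and your induction is exactly such a deduction --- flip the last letter via the right tropical mutation identity of Lemma \ref{equ:trop}, transport it leftward by mixed $2$-moves via Theorem \ref{fg}, and recognize the accumulated mutations as $\zeta_{\mathbf i}$ straight from the definition (\ref{equpartialzeta}). The only organizational difference is that you run the iteration recursively through an amalgamation split (using $[b_-g]_{\leq 0}=b_-[g]_{\leq 0}$ for $b_-\in B_-$) instead of iterating Lemma \ref{equ:trop} on the full word with its growing negative prefix; your index bookkeeping for the telescoping composite is correct.
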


\subsubsection{The extended combinatorics on seed $\mathcal X$-tori}
Let us first remark that equation (\ref{equ:defJ}) implies that any cluster
transformation
$\mu_{\mathbf i\to\mathbf j}:{\mathcal X}_{\mathbf i}\to{\mathcal X}_{\mathbf j}$,
for any $\mathbf j$ linked to $\mathbf i$ by composition of generalized $d$-moves,
induces a cluster transformation
$\mu_{[\mathbf i]_{\mathfrak{R}}\to[\mathbf j]_{\mathfrak{R}}}
:{\mathcal X}_{[\mathbf i]_{\mathfrak{R}}}\to{\mathcal X}_{[\mathbf j]_{\mathfrak{R}}}$ given by
\begin{equation}\label{equ:mucrochet}
x_{\mu_{[\mathbf i]_{\mathfrak{R}}\to[\mathbf j]_{\mathfrak{R}}}(j)}=\left\{
\begin{array}{ll}
x_{\mu_{\mathbf i\to\mathbf j}(j)}&\text{if }j\in I(\mathbf i)\backslash I^{\mathfrak R}_0(\mathbf i)\ ;\\
x_j&\text{if }j\in I^{\mathfrak R}_0(\mathbf i)\ .
\end{array}
\right.
\end{equation}
We have in particular the restriction on Poisson subvariety $\mu_{[\mathbf i]_{\mathfrak{R}}
\to[\mathbf j]_{\mathfrak{R}}}:{\mathcal X}_{\mathbf{i}}(t)\to{\mathcal X}_{\mathbf{j}}(t)$
for any $t\in H$.
It is easy to see that for every double word the identity map is a Poisson morphism from
${\mathcal X}_{[\mathbf i]_{\mathfrak{R}}}$ to ${\mathcal X}_{[\mathfrak{R}
(\mathbf i)]_{\mathfrak{R}}}$. We denote it $\mu_{[\mathbf i]_{\mathfrak{R}}\to
[\mathfrak{R}(\mathbf i)]_{\mathfrak{R}}}$ and extend the definition (\ref{equ:mucrochet})
by the formula
$$\mu_{[\mathbf i]_{\mathfrak{R}}\to[\mathfrak{R}(\mathbf j)]_{\mathfrak{R}}}:=
\mu_{[\mathbf i]_{\mathfrak{R}}\to[\mathbf j]_{\mathfrak{R}}}\circ
\mu_{[\mathbf j]_{\mathfrak{R}}\to[\mathfrak{R}(\mathbf j)]_{\mathfrak{R}}}\ .$$
And because
every elements $\mathbf i,\mathbf j\in D_{w_1}(v)$ can be obtained from each
other by a sequence of generalized $d$-moves and right $\tau$-moves by Lemma
\ref{lemma:relPWR}, we deduce from Theorem~\ref{fg} and the expression
(\ref{equ:evhat}) of twisted evaluations that for every $\mathbf i,\mathbf j
\in D_{w_1}(v)$, we have the equality
\begin{equation}\label{equ:taumuR}
\widehat{\ev}_{\mathbf{i}}=\widehat{\ev}_{\mathbf{j}}
\circ{{\mu}}_{[\mathbf{i}]_{\mathfrak{R}}\rightarrow[\mathbf j]_{\mathfrak{R}}}\ .
\end{equation}
Therefore, the equation (\ref{equ:taumuR}) allows us to associate, for every $w\leq v\in W$,
a cluster $\mathcal X$-variety ${\mathcal X}_{w\leq v}$ to the set $D_w(v)$: it
indeed contains every seed $\mathcal X$-torus ${\mathcal X}_{[\mathbf i]_{\mathfrak{R}}}$ when
$\mathbf i\in D_w(v)$.

Next, we consider the combinatorics on seed $\mathcal X$-tori related to dual moves
to link all the cluster $\mathcal X$-varieties ${\mathcal X}_{w\leq v}$ for a fixed
$v\in W$. To do that, we define for every double reduced word $\mathbf j$, every positive word
$\mathbf i_+\in R(1,w_0)$ and every $i\in[1,\ell]$, the
map $\Xi_{k}:{\mathcal X}_{[\mathbf j\mathbf{i_+}
\overline{k}]_{\mathfrak R}}\to{\mathcal X}_{[\mathbf j\mathbf i_+^{\square}
k^{\star}]_{\mathfrak R}}$ given by
$$x_{\Xi_{k}\binom{i}{j}}=\left\{
\begin{array}{lll}
x_{\zeta_{\mathbf i_+}\binom{i}{j}}&\mbox{if }j<N^i(\mathbf j\mathbf i_+)\ ;\\
x_{\zeta_{\mathbf i_+}\binom{i}{j}}x_{\binom{i^{\star}}{N^i(\mathbf j\mathbf i_+\overline{k})}}^{-1}
&\mbox{if }j=N^i(\mathbf j\mathbf i_+)<N^i(\mathbf j\mathbf i_+\overline{k})\ ;\\
x_{\binom{i}{j}}&\mbox{otherwise}\ .
\end{array}
\right.
$$
Let $i\in[1,\ell]$ and $\mathbf i$ be a double word such that we can
apply the dual move $\Delta_i$ on it. Then the following product
is a birational Poisson isomorphism.
$$
\begin{array}{cccl}
\Xi_{s_i}:&{\mathcal X}_{[\mathbf i]_{\mathfrak R}}
&\longrightarrow&{\mathcal X}_{[\Delta_i(\mathbf i)]_{\mathfrak R}}\\
&\mathbf x&\longmapsto&\mu_{[\mathbf i_+^{\square}i^{\star}]_{\mathfrak R}\to[\Delta_{i}
(\mathbf i)]_{\mathfrak R}}\circ\Xi_{i}\circ\mu_{[\mathbf i]_{\mathfrak R}\to
[\mathbf i_+\overline{i}]_{\mathfrak R}}(\mathbf x)\ .
\end{array}
$$

\begin{lemma}\cite[Proposition 8.11]{RB}\label{prop:salt} For every
$i\in[\overline{1},\overline{\ell}]$
and every double reduced word $\mathbf i\in R(s_i,w_0)$ starting with
the letter $i$, we have the equality
$\widehat{\ev}_{\mathbf i}=\widehat{\ev}_{\Delta_{i}(\mathbf i)}
\circ\Xi_{s_i}$.
\end{lemma}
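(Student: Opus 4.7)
The plan is to split $\Xi_{s_i}$ into its three constituent maps as in the defining formula and intertwine the twisted evaluations one stage at a time. Writing $\mathbf i = i\,\mathbf i_+$ with $\mathbf i_+\in R(1,w_0)$ a positive reduced word for $w_0$, the dual move gives $\Delta_i(\mathbf i) = \overline{i}^{\star}\,\mathbf i_+^{\square}$. The outer two cluster transformations $\mu_{[\mathbf i]_{\mathfrak R}\to[\mathbf i_+\overline i]_{\mathfrak R}}$ and $\mu_{[\mathbf i_+^{\square}i^{\star}]_{\mathfrak R}\to[\Delta_i(\mathbf i)]_{\mathfrak R}}$ correspond to chains of mixed $2$-moves commuting the single negative (respectively positive) letter past a string of opposite-sign letters. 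By equation (\ref{equ:taumuR}), each such chain intertwines the twisted evaluations on source and target, so the statement reduces to proving
$$\widehat{\ev}_{\mathbf i_+\overline i} = \widehat{\ev}_{\mathbf i_+^{\square}i^{\star}}\circ \Xi_i.$$

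Next I would expand both sides using (\ref{equ:evhat}) and the trivial $(w_1,w_2)_v$-decompositions of $\mathbf i_+\overline i$ and $\mathbf i_+^{\square}i^{\star}$. The ingredients reduce to $\ev_{\mathbf i_+}^{\red}$, to truncations of the form $[\ev_{\mathbf i_+}(\cdot)\widehat{w_0^{-1}}]_{\leq 0}$, and to their $\theta$-conjugates appearing in $\ev^{\mathfrak L}$. Corollary \ref{lemma:twist} then supplies the decisive identity
$$[\ev_{\mathbf i_+}(\mathbf x)\widehat{w_0^{-1}}]_{\leq 0} = \ev_{\mathbf i_+^{\square}}\circ\zeta_{\mathbf i_+}(\mathbf x),$$
which converts every $\leq 0$-truncation occurring in $\widehat{\ev}_{\mathbf i_+\overline i}$ into an honest evaluation along the reversed word $\mathbf i_+^{\square}$ composed with precisely the $\zeta$-tropical mutations out of which $\Xi_i$ is built via (\ref{equpartialzeta}). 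Substituting these identities should convert the expanded form of $\widehat{\ev}_{\mathbf i_+\overline i}$ into the expanded form of the right-hand side.

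The hard part will be tracking the Cartan ``frozen'' data indexed by $I_0^{\mathfrak R}$, which carries the central factor $\ev_{\mathbf 1}(\mathbf x({\mathfrak R}))\widehat{w_0}$ in~(\ref{equ:evhat}). The explicit rule for $\Xi_i$ modifies the seam variables by the factor $x_{\binom{i^{\star}}{N^i(\mathbf j\mathbf i_+\overline i)}}^{-1}$, and matching the two sides amounts to verifying that this torus shift is exactly the one produced when $\widehat{w_0}$ is pushed across the dual move via the $\star$-permutation of indices. Once this compatibility between the $\star$-relabelling and the boundary factor is in place, rebuilding $\widehat{\ev}_{\mathbf i_+^{\square}i^{\star}}\circ\Xi_i$ recovers $\widehat{\ev}_{\mathbf i_+\overline i}$ and the lemma follows.
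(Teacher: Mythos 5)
First, a point of reference: this paper never proves this statement itself --- it is imported verbatim from the prequel as \cite[Proposition 8.11]{RB} --- so your argument must be judged on its own completeness rather than against an in-paper proof. Your reduction step is sound. Writing $i=\overline k$ and $\mathbf i=\overline k\,\mathbf{i_+}$ with $\mathbf{i_+}\in R(1,w_0)$, the words $\mathbf i$ and $\mathbf{i_+}\overline k$ are both $(s_{k^{\star}},e)_{s_{k^{\star}}}$-words, while $\mathbf{i_+}^{\square}k^{\star}$ and $\Delta(\mathbf i)=k^{\star}\mathbf{i_+}^{\square}$ are both $(e,w_0)_{s_{k^{\star}}}$-words; within each pair the words are linked by mixed $2$-moves, so equation (\ref{equ:taumuR}) applies inside $D_{s_{k^{\star}}}(s_{k^{\star}})$ and inside $D_{e}(s_{k^{\star}})$ respectively, and the two outer cluster transformations in the definition of $\Xi_{s_i}$ do intertwine the twisted evaluations. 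This correctly reduces the lemma to the middle identity $\widehat{\ev}_{\mathbf{i_+}\overline k}=\widehat{\ev}_{\mathbf{i_+}^{\square}k^{\star}}\circ\Xi_{k}$.

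The gap is that this middle identity \emph{is} the lemma --- all of its content lives there --- and you do not prove it. You describe the plan (expand via (\ref{equ:evhat}), rewrite truncations with Corollary \ref{lemma:twist}) and then explicitly defer the decisive verification as ``the hard part'', asserting that the seam factor $x_{\binom{i^{\star}}{N^{i}(\cdot)}}^{-1}$ in $\Xi_{k}$ is exactly the shift produced when $\ev_{\mathbf 1}(\mathbf x(\mathfrak R))\widehat{w_0}$ is transported across the dual move. That assertion is precisely what has to be checked: it is where the inverse on the seam variable and the appearance of the index $i^{\star}$ rather than $i$ are forced, and it does not follow formally from anything you cite. Moreover, one tool you invoke does not reach as far as you claim: Corollary \ref{lemma:twist} rewrites truncations of the form $[\ev_{\mathbf i}(\mathbf x)\widehat{v^{-1}}]_{\leq 0}$ and $[\widehat u^{-1}\ev_{\mathbf j}(\mathbf y)]_{\geq 0}$, but expanding $\widehat{\ev}_{\mathbf{i_+}\overline k}$ by (\ref{equ:evhat}) produces inside $\ev^{\mathfrak L}$ the nested, $\theta$-twisted truncation $\bigl[\bigl([\ev^{\red}_{\mathbf{i_+}}(\cdot)\widehat{w_0}]_{\leq 0}\bigr)^{\theta}\widehat{w_0}\bigr]_{\leq 0}^{\theta}$, which that corollary does not cover; you would additionally need a statement describing how $\theta$ transforms evaluation maps (it exchanges $E^{j}$ and $F^{j}$, hence positive and negative letters, and inverts Cartan coordinates --- an analogue of Lemma \ref{lemma:w_0} with $\theta$ in place of conjugation by $\widehat{w_0}$), and no such statement is invoked or available in this paper. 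As written, your proposal is a credible strategy assembled from the right ingredients, but it is not a proof.
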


This result is then immediately extended to any trivial $(w_1,1)$-words,
for any $w_1\leq v\in W$ by using the amalgamation product on seed $\mathcal X$-tori.

Here is finally the construction of the birational Poisson isomorphism
$\widehat{\mu}_{\mathbf i\rightarrow \mathbf{j}}$ associated to any $\mathbf i,\mathbf j\in D(v)$.
We have seen that to any double words $\mathbf i,\mathbf{i'}\in D(v)$,
there exists a $\widehat{d}$-move $\delta$ such that $\delta:\mathbf{i}
\rightarrow\mathbf{i'}$. From the previous results, it is natural to associate
to $\delta$ the birational Poisson isomorphism $\widehat{\mu}_{\mathbf i\rightarrow \mathbf{i'}}:
{\mathcal X}_{\mathbf i}\rightarrow{\mathcal X}_{\mathbf{i'}}$ given by
\begin{itemize}
\item
the cluster transformation $\mu_{[\mathbf i]_{\mathfrak R}\rightarrow [\mathbf{i'}]_{\mathfrak R}}$
if $\delta$ is a generalized $d$-moves;
\item
the identity map if $\delta$ is a right $\tau$-move;
\item
the map $\Xi_{s_i}$ if $\delta$ is the dual-move $\Delta_{i}$.
\end{itemize}
This definition is then extended to every $\mathbf i,\mathbf{j}\in  D(v)$ in
the usual way: if  $\mathbf i,\mathbf j$ are double words linked be a sequence
of $\widehat{d}$-moves and  $\mathbf{i}\to\mathbf{i_1}\rightarrow
\dots\rightarrow\mathbf{i_{n-1}}\rightarrow\mathbf{j}$ is the associated chain of elements,
the map
$\widehat{{\mu}}_{\mathbf{i}\rightarrow\mathbf{j}}$ will be the composition
$\widehat{{\mu}}_{\mathbf{i_{n-1}}\rightarrow\mathbf j}\circ \dots\circ
\widehat{{\mu}}_{\mathbf{i}\rightarrow\mathbf{i_1}}$, and Theorem~\ref{thm:ev*} is derived
from the equality (\ref{equ:taumuR}) and Lemma \ref{prop:salt}.

We finish this subsection by considering the dual Poisson Lie-group
$(BB_-,\pi_*)$ in $(G,\pi_*)$. For every double word $\mathbf i$, let
${\mathcal X}_{\mathbf i}^{\dual}\subset{\mathcal X}_{[\mathbf i]_{\mathfrak R}}$
be such that $x_i\neq x_j$ for every $i,j\in I^{\mathfrak{R}}_0(\mathbf i)$.
It is a Poisson submanifold of ${\mathcal X}_{[\mathbf i]_{\mathfrak R}}$
because of (\ref{equ:defJ}). The decomposition (\ref{equ:decG^*})
then leads to the following result.

\begin{cor}For every $\mathbf i\in D(w_0)$,
the map $\widehat{\ev}_{\mathbf i}:{\mathcal X}_{\mathbf i}^{\dual}
\rightarrow (BB_-,\pi_*)$ is a Poisson birational isomorphism on a Zariski open
set of $BB_-$ and the equality $\widehat{\ev}_{\mathbf{i}}=\widehat{\ev}_{\mathbf{j}}
\circ{\widehat{\mu}}_{\mathbf{i}\rightarrow\mathbf j}$ is satisfied for every $\mathbf j\in D(w_0)$.
\end{cor}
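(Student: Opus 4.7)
The plan is to derive both assertions by fibering over $H$ and combining Theorem~\ref{thm:ev*1} (taken at $v=w_0$) with Theorem~\ref{thm:ev*}, using the Poisson structure on ${\mathcal X}_{[\mathbf i]_{\mathfrak R}}$ given by (\ref{equ:defJ}).

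First I would observe that specializing Theorem~\ref{thm:ev*1} to $v=w_0$ gives, for each $t\in H$, a Poisson birational isomorphism $\widehat{\ev}_{\mathbf i}\colon {\mathcal X}_{\mathbf i}(t)\to(F_{t,1},\pi_*)$ onto a Zariski open subset, since $w_0 v^{-1}=w_0 w_0^{-1}=1$ and $F_{t,1}=B\widehat 1 B_-\cap F_t=BB_-\cap F_t$. On the $BB_-$ side, the Steinberg stratification (\ref{equ:decG^*}) restricted to the open cell yields $BB_-=\bigcup_{t\in H}F_{t,1}$. On the $\mathcal X$-torus side, (\ref{equ:defJ}) ensures that the coordinates $x_i$ with $i\in I_0^{\mathfrak R}(\mathbf i)$ are Poisson-central in ${\mathcal X}_{[\mathbf i]_{\mathfrak R}}$; the level sets $\{\ev_{\mathbf 1}(\mathbf x({\mathfrak R}))=t\}$ are exactly the Poisson subvarieties ${\mathcal X}_{\mathbf i}(t)$, and they foliate ${\mathcal X}_{\mathbf i}^{\dual}$ as $t$ runs over the locus where $\ev_{\mathbf 1}(\mathbf x({\mathfrak R}))$ is regular.

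Next I would glue the fiberwise Poisson birational isomorphisms into a single Poisson birational isomorphism $\widehat{\ev}_{\mathbf i}\colon {\mathcal X}_{\mathbf i}^{\dual}\to (BB_-,\pi_*)$. Two points need verification. One, the map $\mathbf x\mapsto \ev_{\mathbf 1}(\mathbf x({\mathfrak R}))$ sends ${\mathcal X}_{\mathbf i}^{\dual}$ onto the regular locus of $H$ (because the condition $x_i\neq x_j$ on the $I_0^{\mathfrak R}$-coordinates is precisely the non-vanishing of root characters on this torus element), which matches, via $t\mapsto F_{t,1}$, the union of those $F_{t,1}$ whose Weyl orbit has full size; this is a Zariski open subset of $BB_-$. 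Two, the assembled map is Poisson because each fiber $\widehat{\ev}_{\mathbf i}|_{{\mathcal X}_{\mathbf i}(t)}$ is Poisson, the transverse direction is Casimir on both sides (the $H$-part parametrizing Steinberg fibers), and birationality follows from birationality on each fiber together with the fact that the parameter $t$ is recovered from $\ev_{\mathbf 1}(\mathbf x({\mathfrak R}))$.

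For the second assertion, I would apply Theorem~\ref{thm:ev*} directly: for any $\mathbf i,\mathbf j\in D(w_0)\subset D(w_0)$, the construction of $\widehat{\mu}_{\mathbf i\to\mathbf j}\colon{\mathcal X}_{[\mathbf i]_{\mathfrak R}}\to{\mathcal X}_{[\mathbf j]_{\mathfrak R}}$ intertwines $\widehat{\ev}_{\mathbf i}$ and $\widehat{\ev}_{\mathbf j}$. It remains to check that $\widehat{\mu}_{\mathbf i\to\mathbf j}$ preserves the open subvariety ${\mathcal X}_{\mathbf i}^{\dual}$, i.e.\ preserves the condition on the $I_0^{\mathfrak R}$-coordinates. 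For composition factors given by generalized $d$-moves and right $\tau$-moves this is immediate from (\ref{equ:mucrochet}), since $I_0^{\mathfrak R}$-coordinates are untouched; for the dual-move factors $\Xi_{s_i}$ one reads from the explicit formula defining $\Xi_k$ that the transformation on the $I_0^{\mathfrak R}$-coordinates permutes them up to the permutation $i\mapsto i^{\star}$ (induced by $-w_0$ acting on $H$), so the regularity condition is preserved.

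The main obstacle, as I see it, is the gluing step: one must be careful that the Poisson-birational isomorphisms $\widehat{\ev}_{\mathbf i}|_{{\mathcal X}_{\mathbf i}(t)}$ supplied by Theorem~\ref{thm:ev*1} assemble into a global morphism rather than just a family of leafwise ones, and that the resulting image is a Zariski open subset of $BB_-$ rather than merely a constructible one. This hinges crucially on the fact that the dependence on the Casimir variables $\mathbf x({\mathfrak R})$ inside the evaluation formula (\ref{equ:evhat}) is algebraic, so that the bijection between the regular locus in $H$ and the generic stratum of Steinberg fibers in $BB_-$ is a morphism of varieties; given this, the open set in $BB_-$ can be described as the pre-image of the regular locus of $H/W$ under the Steinberg quotient restricted to $BB_-$.
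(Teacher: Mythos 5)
Your route is the one the paper itself intends: the paper offers no argument beyond saying that the decomposition (\ref{equ:decG^*}), combined with Theorem~\ref{thm:ev*1} at $v=w_0$ (so that $F_{t,w_0v^{-1}}=F_{t,1}=BB_-\cap F_t$) and Theorem~\ref{thm:ev*}, ``leads to'' the corollary, and your proposal is an attempt to fill in exactly that sketch. Your treatment of the second assertion is essentially fine: Theorem~\ref{thm:ev*} gives the identity on all of ${\mathcal X}_{[\mathbf i]_{\mathfrak R}}$, and by (\ref{equ:mucrochet}) and the explicit formula for $\Xi_{k}$ the maps $\widehat{\mu}_{\mathbf i\to\mathbf j}$ act on the $I_0^{\mathfrak R}$-coordinates at worst by the relabelling $i\mapsto i^{\star}$, so ${\mathcal X}^{\dual}_{\mathbf i}$ is carried into ${\mathcal X}^{\dual}_{\mathbf j}$.

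The genuine gap is in the gluing step, precisely at the point you call ``the main obstacle'', and your resolution of it is incorrect. The assignment $t\mapsto F_{t,1}$ is not injective: by (\ref{equ:decG^*}) one has $F_{t,1}=F_{w(t),1}$ for every $w\in W$, so it is the regular locus of the $W$-orbit space of $H$, not of $H$ itself, that parametrizes generic Steinberg fibers --- your claimed ``bijection between the regular locus in $H$ and the generic stratum of Steinberg fibers'' is a $|W|$-to-one map. Consequently, for regular $t$ the pieces ${\mathcal X}_{\mathbf i}(t)$ and ${\mathcal X}_{\mathbf i}(w(t))$, which are disjoint inside ${\mathcal X}^{\dual}_{\mathbf i}$, are each mapped by Theorem~\ref{thm:ev*1} birationally onto dense open subsets of one and the same irreducible variety $F_{t,1}$, so the assembled map is generically $|W|$-to-one rather than injective. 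Your inference ``birationality follows from birationality on each fiber together with the fact that $t$ is recovered from $\ev_{\mathbf 1}(\mathbf x(\mathfrak R))$'' is fallacious for the same reason: $t$ is recovered from the \emph{source} point, but the \emph{image} point in $BB_-$ only determines the $W$-orbit of $t$. One can see the failure concretely in the paper's own rank-one example of Section~\ref{section:SL2}, where $I_0^{\mathfrak R}$ is a singleton (so the condition defining ${\mathcal X}^{\dual}$ is vacuous) and the displayed matrix for $\widehat{\ev}_{\overline 11}$ satisfies the identity
$$\widehat{\ev}_{\overline 11}(y_0,\,y_1,\,t)\;=\;\widehat{\ev}_{\overline 11}(y_0,\,y_1t^{-1},\,t^{-1})\ ,$$
identifying two distinct generic points of ${\mathcal X}^{\dual}_{\overline 11}$. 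Note also that no Zariski-open restriction can cure this, since both members of such a pair are generic; what is missing is an extra mechanism that quotients the Casimir directions by the $W$-action (equivalently, works over $H/W$, or relates the count of preimages to the covering $G^*\to BB_-$ of Proposition~\ref{prop:STSPoisson}). As it stands, your argument establishes only that $\widehat{\ev}_{\mathbf i}$ is a dominant Poisson map restricting to a birational isomorphism onto each generic piece $F_{t,1}$, not the global birationality asserted in the statement.
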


\section{The combinatorics of De-Concini-Kac-Procesi automorphisms}
\label{section:Artin}
We add the left tropical mutations of Lemma \ref{prop:muttrop} to
the birational Poisson isomorphisms of the previous section in
order to define an action
of the Artin group associated to $\mathfrak{g}$ on seed $\mathcal X$-tori.
Applying twisted evaluations then leads to a generalization of the
De-Concini-Kac-Procesi automorphisms associated to parabolic subgroups of
$W$.

\subsection{Artin groups actions and Poisson automorphisms on seed $\mathcal X$-tori}
Let us denote $T_{ij}^{(m)}$ the product of $m$ factors ${T_i}{T_j}{T_i}\dots$ and
let $m_{ij}:=2,3,4,6$ when $a'_{ij}a'_{ji}=0,1,2,3$, respectively,
where $A'=(a'_{ij})$ denotes the Cartan matrix associated to any semi-simple Lie algebra
$\mathfrak{g}'$. We recall that the \emph{Artin group} ${\mathcal B}_{\mathfrak g'}$
{associated to} $\mathfrak{g}'$ is given by the presentation
$${\mathcal B}_{\mathfrak g'}=<T_1,\dots,T_{\ell'}\mid T_{ij}^{(m_{ij})}=T_{ji}^{(m_{ji})}>\ .$$
We are going to show that tropical mutations induced by left $\tau$-moves, mutations
and the previous saltations lead to Poisson automorphisms that define an action of the
Artin group associated to $\mathfrak{g}$ on cluster $\mathcal X$-varieties.

Let us remember the involution $\star:i\mapsto i^{\star}$ on the indices of
fundamental weights induced by the transformation $(-w_0)$ on ${\mathfrak h}^*$ given in Subsection \ref{section:cluster*}.
Let $W_I$ be the parabolic subgroup of $W$ generated by a subset $I\subset[1,\ell]$ stable
under the involution $\star$, ${\mathfrak{g}}^I$ be the associated semi-simple Lie
algebra, and $w_0(I)$ be the longest element of $W_I$. In particular, for every $j\in I$,
there exist a reduced expression of $w_0(I)$ starting with $j$. Let us then remark that, to
every $\mathbf i\in D(w_0(I))$ and every letter $j\in I$, we can associate via Theorem
\ref{thm:ev*} a double reduced word $\mathbf{i_0}\in R(w_0(I),w_0)$
starting with $\overline{j}$ such that the equality $\widehat{\ev}_{\mathbf i}=
\widehat{\ev}_{\mathbf{i_0}}\circ\widehat{\mu}_{\mathbf i\rightarrow\mathbf{i_0}}$ is satisfied.
We use this double reduced word $\mathbf{i_0}$ (although the final result doesn't depend of this
particular choice) to define the Poisson birational automorphism
\begin{equation}\label{equ:defti}
\begin{array}{ccc}
{\mathcal T}_j({\mathbf i}):{\mathcal X}_{[\mathbf{i}]_{\mathfrak R}}
\longrightarrow{\mathcal X}_{[\mathbf{i}]_{\mathfrak R}}:&\mathbf x
\longmapsto\widehat{\mu}_{{\mathfrak L}(\mathbf {i_0})\rightarrow\mathbf i}
\circ{\mu_{\binom j{0}}}\circ\widehat{\mu}_{\mathbf {i}\rightarrow\mathbf{i_0}}
\end{array}\ .
\end{equation}

Here is how we relate braids to cluster combinatorics.

\begin{thm}\label{thm:clustertresses} For every $I\subset[1,\ell]$ stable
under the involution $\star$ and $\mathbf i\in D(w_0(I))$, the maps
${\mathcal T}_j({\mathbf i}):{\mathcal X}_{[\mathbf i]_{\mathfrak R}}\longrightarrow
{\mathcal X}_{[\mathbf i]_{\mathfrak R}}$, $j\in I$, define an action of
${\mathcal B}_{{\mathfrak g}^I}$ on ${\mathcal X}_{[\mathbf i]_{\mathfrak R}}$
given by
$$\begin{array}{rlccc}
{\mathcal B}_{{\mathfrak g}^I}\times{\mathcal X}_{[\mathbf i]_{\mathfrak R}}
\longrightarrow{\mathcal X}_{[\mathbf i]_{\mathfrak R}}:&
(T_j,\mathbf x)\longmapsto{\mathcal T}_j({\mathbf i})(\mathbf x)
\end{array}\ .$$
\end{thm}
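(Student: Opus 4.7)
The theorem reduces to two claims: (a) the operator $\mathcal{T}_j(\mathbf{i})$ of (\ref{equ:defti}) does not depend on the auxiliary choice of $\mathbf{i_0}\in R(w_0(I),w_0)$ starting with $\overline{j}$; and (b) the collection $\{\mathcal{T}_j(\mathbf{i})\}_{j\in I}$ satisfies the braid relations $T_{ij}^{(m_{ij})}=T_{ji}^{(m_{ji})}$ of $\mathcal{B}_{\mathfrak{g}^I}$. These are the two successive targets of the proof.

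\emph{Well-definedness.} For two valid choices $\mathbf{i_0},\mathbf{i_0}'$, Theorem \ref{thm:ev*} supplies a birational Poisson isomorphism $\widehat{\mu}_{\mathbf{i_0}\to\mathbf{i_0}'}$ decomposing into a chain of $\widehat{d}$-moves. The first step is a Matsumoto-type lemma for $D(w_0(I))$: any two elements of $R(w_0(I),w_0)$ sharing the initial letter $\overline{j}$ can be connected by a sequence of $\widehat{d}$-moves whose intermediate words also start with $\overline{j}$. Granted this, the left tropical mutation $\mu_{\binom{j}{0}}$ is supported on the coordinates indexed by the initial letter and its cluster neighbours, which lie outside the support of the remaining $\widehat{d}$-moves; an elementary check using formulas (\ref{equ:formuletropmut}) and the mutation recipes for generalized $d$-moves and dual-moves in Section \ref{section:cluster*} shows that $\widehat{\mu}_{\mathbf{i_0}\to\mathbf{i_0}'}$ commutes with $\mu_{\binom{j}{0}}$ up to the analogous $\widehat{\mu}_{\mathfrak{L}(\mathbf{i_0})\to\mathfrak{L}(\mathbf{i_0}')}$. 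Conjugating the two candidate expressions for $\mathcal{T}_j(\mathbf{i})$ by $\widehat{\mu}_{\mathbf{i_0}\to\mathbf{i_0}'}$ then forces them to agree, and produces a Poisson automorphism of $\mathcal{X}_{[\mathbf{i}]_{\mathfrak{R}}}$.

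\emph{Braid relations.} Using (a), the identity involving $m_{ij}$ alternating factors needs to be verified on only one convenient seed for each pair $(i,j)\in I\times I$. I would pick $\mathbf{i}\in D(w_0(I))$ whose first entries spell a reduced expression for the longest element $w_0(\{i,j\})$ of the rank-2 parabolic $W_{\{i,j\}}$, followed by an arbitrary completion to $w_0(I)$. Because of the amalgamation structure of Subsection \ref{section:clusterpiG} (formula (\ref{equ:amaleps}) and its associated homomorphism $\mathfrak{m}$), every mutation, tropical mutation and $\Xi_{s_i}$ entering $\mathcal{T}_i(\mathbf{i})$ and $\mathcal{T}_j(\mathbf{i})$ is supported on the sub-seed attached to $w_0(\{i,j\})$. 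This localizes the verification to one of four rank-2 subcases, according to whether $a_{ij}a_{ji}=0,1,2,3$, i.e.\ $m_{ij}=2,3,4,6$.

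\emph{Rank-2 verification and main obstacle.} For $m_{ij}=2$ the two operators act on disjoint sets of cluster coordinates and commute tautologically. For $m_{ij}=3,4,6$, each side of the braid relation is, by construction, an explicit composition of mutations and left tropical mutations built from the cluster transformations of Section \ref{section:clusterpiG}; the verification becomes a direct but lengthy computation. The main obstacle is the $G_2$ case, where the cluster transformation associated to a $6$-move already has ten mutation factors and must be intertwined with dual-moves $\Xi_{s_i}$ and left tropical mutations. To keep this tractable I would leverage the $A_1$ analysis of Section \ref{section:SL2} as a black box: restricting to each rank-1 root subgroup of $\mathfrak{g}^{\{i,j\}}$ reduces the rank-2 braid identity to a controlled sequence of $SL_2$ identities of the type already verified there, thereby turning the $G_2$ verification into a bookkeeping exercise rather than a genuinely new calculation.
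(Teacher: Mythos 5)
Your plan---prove independence of the auxiliary word, localize to a rank-2 parabolic, then check the braid relation case by case---diverges from the paper exactly where the real work lies, and the step you defer to ``direct but lengthy computation'' is one you cannot complete by the means you propose. The paper never performs any rank-2 verification. Instead it proves two lemmas: Lemma \ref{lemma:Tijtwist}, which rewrites the composite $\mathcal{T}_w(\mathbf i)$ as a conjugate $\widehat{\mu}_{\mathbf{i_0}(\ell(w))\to\mathbf i}\circ\zeta_{\mathbf{i_0}(\leq\ell(w))}\circ\widehat{\mu}_{\mathbf i\to\mathbf{i_0}}$ of a string of left tropical mutations, and Lemma \ref{lemma:tormut}, the intertwining $\mu_{\mathbf{i^{\square}}\to\mathbf{j^{\square}}}\circ\zeta_{\mathbf i}=\zeta_{\mathbf j}\circ\mu_{\mathbf i\to\mathbf j}$. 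The latter is proved with no cluster computation at all: since the evaluation maps are birational (Theorem \ref{thm:evG}), an identity between cluster transformations can be tested after applying $\ev_{\mathbf{i^{\square}}}$, where it follows from Theorem \ref{fg} and Corollary \ref{lemma:twist}, because the twist $b\mapsto[b\widehat{v^{-1}}]_{\leq 0}$ on $G$ is defined independently of any choice of word. The braid relation is then read off the boundary of the glued commutative diagrams (\ref{equ:synth1})--(\ref{equ:syn}). In short, the braid relation is \emph{inherited from the geometry of $G$ through the birational evaluations}, not verified on seeds; this transfer mechanism is the idea missing from your proposal.

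Two of your specific steps fail. First, the $G_2$ shortcut: Section \ref{section:SL2} treats $A_1$, where the Artin group is infinite cyclic and there is no braid relation to verify; that section even ends by computing ${\mathcal T}_1({11})^2(z_0,z_1,t)=(z_0z_1^{-2}t^2,z_1,t)\neq\Id$, so there is no rank-1 identity available to use as a black box. A relation with $m_{ij}=6$ couples the two simple roots in an essential way and cannot be obtained by restricting to rank-1 root subgroups; without the paper's geometric argument you would genuinely have to carry out the ten-mutation $6$-move computation intertwined with the maps $\Xi_{s_i}$ and tropical mutations, which is precisely what Lemma \ref{lemma:tormut} is designed to avoid. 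Second, your well-definedness argument rests on the claim that the coordinates touched by $\mu_{\binom{j}{0}}$ ``lie outside the support of the remaining $\widehat{d}$-moves.'' That is false: by (\ref{equ:exeps}) and (\ref{equ:amaleps}), the frozen vertex $\binom{j}{0}$ satisfies $\varepsilon(\mathbf i)_{\binom{i}{1}\binom{j}{0}}\neq 0$ whenever $a_{ij}\neq 0$, so an ordinary mutation at $\binom{i}{1}$ (coming, say, from a $3$-move deeper in the word) does change the coordinate $x_{\binom{j}{0}}$, and conversely the tropical mutation at $\binom{j}{0}$ alters $\varepsilon$-entries at non-frozen vertices by formula (\ref{equ:formuletropmut}) and Definition \ref{def:trop}(iii). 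The commutation you call ``an elementary check'' is exactly the nontrivial intertwining content that the paper proves by passing through the evaluation maps, and it cannot be dismissed as a disjoint-support statement.
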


We need two lemmas to prove this theorem. To any set $I\subset[1,\ell]$ stable under the involution $\star$,
$w\leq w_0(I)\in W$, and any reduced decomposition $i_1\dots i_n$ of $w$,
and any double word $\mathbf i\in D(w_0(I))$, we associate a Poisson automorphism
$\mathcal{T}_{w}(\mathbf i)$ on ${\mathcal X}_{[\mathbf{i}]_{\mathfrak R}}$
given by
$$\mathcal{T}_{w}(\mathbf i)=\mathcal{T}_{i_n}(\mathbf i)\circ\dots
\circ\mathcal{T}_{i_1}(\mathbf i)\ .$$
For every $v\in W$, $k\in[1,\ell(v)]$ and $\mathbf{i_0}=\mathbf{i_1}\mathbf{i_2}\in D(v)$
such that $\mathbf{i_1}\in R(1,v)$ and $\mathbf{i_2}\in R(1,w_0)$, we recycle the
notation of Subsection \ref{section:tropical} by denoting
$\mathbf {i_0}(k):=\mathbf {i_1}(k)\mathbf{i_2}$.
Let us also introduce a variation of (\ref{equpartialzeta}) by setting:
$$
\begin{array}{ccc}
\zeta_{\mathbf{i_0}(j)}:=\mu_{\binom{i_j}{N^{i_j}(\mathbf{i_1}(j)_--1)}}\circ\dots
\circ\mu_{\binom{i_j}{1}}\circ\mu_{\binom{i_j}{0}}&
\mbox{and}&\zeta_{\mathbf{i_0}(\leq k)}:=\zeta_{\mathbf{i_0}(k)}\circ\dots\circ\zeta_{\mathbf{i_0}(1)}\ .
\end{array}
$$

\begin{lemma}\label{lemma:Tijtwist}For every $I\subset[1,\ell]$ stable
under the involution $\star$, every $w\leq w_0(I)$, every $\mathbf i\in D(w_0(I))$
and every $(e,e)$-word $\mathbf{i_0}\in D(v)$ as above, we have the following equality
$$
\mathcal{T}_{w}(\mathbf i)=\widehat{\mu}_{\mathbf{i_0}(\ell(w))\to\mathbf i}
\circ\zeta_{\mathbf{i_0}(\leq\ell(w))}\circ\widehat{\mu}_{\mathbf i\to\mathbf{i_0}}\ .
$$
\end{lemma}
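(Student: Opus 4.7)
The plan is to induct on $n=\ell(w)$, after first reducing to the case $\mathbf{i}=\mathbf{i_0}$ by a naturality property of the maps $\mathcal{T}_j$. Indeed, if we evaluate the definition (\ref{equ:defti}) of both $\mathcal{T}_j(\mathbf{i})$ and $\mathcal{T}_j(\mathbf{i'})$ using one and the same auxiliary word $\mathbf{i}^{\ast}\in R(w_0(I),w_0)$ starting with $\overline{j}$, Theorem~\ref{thm:ev*} gives
\begin{equation*}
\mathcal{T}_j(\mathbf{i})=\widehat{\mu}_{\mathbf{i'}\to\mathbf{i}}\circ\mathcal{T}_j(\mathbf{i'})\circ\widehat{\mu}_{\mathbf{i}\to\mathbf{i'}},
\end{equation*}
because the transition maps $\widehat{\mu}$ telescope. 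Inserting this identity factorwise into $\mathcal{T}_w(\mathbf{i})=\mathcal{T}_{i_n}(\mathbf{i})\circ\cdots\circ\mathcal{T}_{i_1}(\mathbf{i})$ with $\mathbf{i'}=\mathbf{i_0}$ collapses the intermediate transitions and reduces the lemma to the identity
\begin{equation}\label{equ:reducedlemmaproof}
\mathcal{T}_w(\mathbf{i_0})=\widehat{\mu}_{\mathbf{i_0}(\ell(w))\to\mathbf{i_0}}\circ\zeta_{\mathbf{i_0}(\leq\ell(w))}.
\end{equation}

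To prove (\ref{equ:reducedlemmaproof}) by induction on $n=\ell(w)$, I would write $w=w's_{i_n}$ with $\ell(w')=n-1$, substitute the inductive hypothesis for $\mathcal{T}_{w'}(\mathbf{i_0})$, and apply naturality once more in order to commute the middle $\widehat{\mu}_{\mathbf{i_0}(n-1)\to\mathbf{i_0}}$ across $\mathcal{T}_{i_n}(\mathbf{i_0})$. After cancelling $\zeta_{\mathbf{i_0}(\leq n-1)}$ on both sides, the inductive step is equivalent to the local identity
\begin{equation}\label{equ:localproof}
\mathcal{T}_{i_n}(\mathbf{i_0}(n-1))=\widehat{\mu}_{\mathbf{i_0}(n)\to\mathbf{i_0}(n-1)}\circ\zeta_{\mathbf{i_0}(n)}.
\end{equation}
Structurally, (\ref{equ:localproof}) is just the $\ell(w)=1$ instance of (\ref{equ:reducedlemmaproof}), now with $s_{i_n}$ in place of $w$ and $\mathbf{i_0}(n-1)$ in place of $\mathbf{i_0}$, so the whole induction collapses to proving this family of single-step identities.

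The crux, and the main obstacle, is therefore (\ref{equ:localproof}). I would prove it by exploiting the freedom in the auxiliary word of definition~(\ref{equ:defti}): starting from $\mathbf{i_0}(n-1)$, take $\mathbf{i}^{\ast}$ to be the double reduced word produced by first applying the left $\tau$-move at $\binom{i_n}{0}$ --- which by Lemma~\ref{prop:muttrop} contributes the initial left tropical mutation $\mu_{\binom{i_n}{0}}$ --- and then propagating the resulting $\overline{i_n}$ into its position in $\mathbf{i_0}(n)$ via a chain of generalized $d$-moves. By the very construction of the variation of (\ref{equpartialzeta}), the cluster transformations associated with those $d$-moves are precisely $\mu_{\binom{i_n}{1}},\dots,\mu_{\binom{i_n}{N^{i_n}(\mathbf{i_1}(n)_-)-1}}$, and the residual twisted transition maps simplify, using Theorem~\ref{thm:ev*}, to $\widehat{\mu}_{\mathbf{i_0}(n)\to\mathbf{i_0}(n-1)}$. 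The delicate part is the simultaneous bookkeeping of the $\widehat{d}$-move chain on double words and the induced Poisson birational maps on the seed $\mathcal{X}$-tori.
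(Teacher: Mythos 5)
Your proposal follows essentially the same route as the paper's own (very terse) proof: an induction on $\ell(w)$ driven by the recursion $\zeta_{\mathbf{i_0}(\leq k)}=\zeta_{\mathbf{i_0}(k)}\circ\zeta_{\mathbf{i_0}(\leq k-1)}$, with the base case $\ell(w)=1$ extracted from the definition (\ref{equ:defti}) of $\mathcal{T}_j$ together with the choice-independence of the auxiliary word. Your preliminary telescoping reduction to $\mathbf{i}=\mathbf{i_0}$ and the explicit single-step identity are just a more organized writing of what the paper leaves implicit, so the two arguments coincide in substance.
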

\begin{proof}The proof of this lemma is done by induction on the length of
$w\in W$, by using the equality $\zeta_{\mathbf i(\leq k)}
=\zeta_{\mathbf i(k)}\circ\zeta_{\mathbf i(\leq k-1)}$. 
The first step of this induction, that is when $\ell(w)=1$, comes from the definition 
(\ref{equ:defti}) of the automorphism $\mathcal{T}_j$.
\end{proof}

\begin{lemma}\label{lemma:tormut} For every $v\in W$ and every reduced words
$\mathbf i,\mathbf j\in R(1,v)$, or $\mathbf i,\mathbf j\in R(v,1)$, we have the equality
$\mu_{\mathbf{i^{\square}}\rightarrow
\mathbf{j^{\square}}}\circ\zeta_{\mathbf i}=\zeta_{\mathbf j}\circ
\mu_{\mathbf i\rightarrow\mathbf j}$.
\end{lemma}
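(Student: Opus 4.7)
The strategy is to reduce the identity to a calculation in the group $G$ via the evaluation map $\ev_{\mathbf{j^{\square}}}$. Writing $\mathbf i=i_1\dots i_n\in R(1,v)$, the reversed word $\mathbf{i^{\square}}=\overline{i_n}\dots\overline{i_1}$ lies in $R(v^{-1},1)$, and likewise $\mathbf{j^{\square}}\in R(v^{-1},1)$. In particular $\mu_{\mathbf{i^{\square}}\to\mathbf{j^{\square}}}$ is a cluster transformation covered by Theorem \ref{fg}, so both sides of the identity are well-defined birational maps $\mathcal X_{\mathbf i}\to\mathcal X_{\mathbf{j^{\square}}}$.

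My plan is to compose both sides with $\ev_{\mathbf{j^{\square}}}$ and check they coincide as rational maps into $G^{v^{-1},1}$. For a generic $\mathbf x\in\mathcal X_{\mathbf i}$, setting $\mathbf y=\mu_{\mathbf i\to\mathbf j}(\mathbf x)$, Theorem \ref{fg} gives $\ev_{\mathbf j}(\mathbf y)=\ev_{\mathbf i}(\mathbf x)$, and the first identity of Corollary \ref{lemma:twist} then produces
$$\ev_{\mathbf{j^{\square}}}(\zeta_{\mathbf j}\circ\mu_{\mathbf i\to\mathbf j}(\mathbf x))=[\ev_{\mathbf j}(\mathbf y)\widehat{v^{-1}}]_{\leq 0}=[\ev_{\mathbf i}(\mathbf x)\widehat{v^{-1}}]_{\leq 0}.$$
On the other side, applying the same identity of Corollary \ref{lemma:twist} first and then Theorem \ref{fg} (now to the pair $\mathbf{i^{\square}},\mathbf{j^{\square}}\in R(v^{-1},1)$) yields
$$\ev_{\mathbf{j^{\square}}}(\mu_{\mathbf{i^{\square}}\to\mathbf{j^{\square}}}\circ\zeta_{\mathbf i}(\mathbf x))=\ev_{\mathbf{i^{\square}}}(\zeta_{\mathbf i}(\mathbf x))=[\ev_{\mathbf i}(\mathbf x)\widehat{v^{-1}}]_{\leq 0}.$$
Since $\ev_{\mathbf{j^{\square}}}$ is a birational isomorphism onto a Zariski open subset of $G^{v^{-1},1}$ by Theorem \ref{thm:ev}, it is injective on a dense open set, so the two compositions agree as rational maps $\mathcal X_{\mathbf i}\to\mathcal X_{\mathbf{j^{\square}}}$.

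The case $\mathbf i,\mathbf j\in R(v,1)$ is handled verbatim, substituting the second identity $[\widehat{v}^{-1}\ev_{\mathbf j}(\mathbf y)]_{\geq 0}=\ev_{\mathbf{j^{\square}}}\circ\zeta_{\mathbf j}(\mathbf y)$ of Corollary \ref{lemma:twist} throughout and using that now $\mathbf{j^{\square}}\in R(1,v^{-1})$. The argument is essentially combinatorial bookkeeping: every ingredient (Theorem \ref{fg}, Corollary \ref{lemma:twist}, the birational-isomorphism property of $\ev$) is already at hand, and the only conceptual point to verify is that $\mathbf{i^{\square}},\mathbf{j^{\square}}$ sit in the same double Bruhat cell, which is immediate from the definitions in (\ref{equ:xiword}). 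I therefore anticipate no genuine obstacle.
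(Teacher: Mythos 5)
Your proof is correct and follows essentially the same route as the paper's: both arguments compose the two sides with an evaluation map (you use $\ev_{\mathbf{j^{\square}}}$, the paper uses $\ev_{\mathbf{i^{\square}}}$, a purely cosmetic difference), reduce both compositions to $[\ev_{\mathbf i}(\mathbf x)\widehat{v^{-1}}]_{\leq 0}$ via Theorem \ref{fg} and Corollary \ref{lemma:twist}, and then conclude by the birationality (hence generic injectivity) of the evaluation maps from Theorem \ref{thm:ev}. Your explicit observation that $\mathbf{i^{\square}},\mathbf{j^{\square}}\in R(v^{-1},1)$, which justifies applying Theorem \ref{fg} to the reversed words, is implicit in the paper's remark that $\square$ maps double reduced words to double reduced words.
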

\begin{proof}We suppose that $\mathbf i,\mathbf j\in R(1,v)$.
Let us recall that the involution $\square$ maps double reduced
words to double reduced words, and that the evaluation map $\ev_{\mathbf j}$ associated
to any double reduced word $\mathbf j$ is birational because of Theorem
\ref{thm:evG}. Therefore an equality $\mathbf y=\mathbf z$ between cluster variables on
${\mathcal X}_{\mathbf{i^{\square}}}$ is satisfied if and only if
the equality $\ev_{\mathbf{i^{\square}}}(\mathbf y)
=\ev_{\mathbf{i^{\square}}}(\mathbf z)$ is satisfied on $G$. Now, it suffices
to apply Theorem \ref{fg} and the second equation of (\ref{equ:torev}) to
obtain the following equality for every $\mathbf x\in {\mathcal X}_{\mathbf i}$.
The case $\mathbf i,\mathbf j\in R(v,1)$ is proved in the same way.
$$\ev_{\mathbf{i^{\square}}}\circ\zeta_{\mathbf i}(\mathbf x)
=[\ev_{\mathbf{j}}\circ\mu_{\mathbf i\rightarrow\mathbf j}
(\mathbf x)\widehat{v^{-1}}]_{\leq 0}=\ev_{\mathbf{j^{\square}}}
\circ\zeta_{\mathbf j}\circ\mu_{\mathbf i\to \mathbf j}(\mathbf x)
=\ev_{\mathbf{i^{\square}}}\circ\mu_{\mathbf{j^{\square}}\rightarrow
\mathbf{i^{\square}}}\circ\zeta_{\mathbf j}\circ
\mu_{\mathbf i\rightarrow\mathbf j}(\mathbf x)\ .$$
\end{proof}

We can now prove Theorem \ref{thm:clustertresses}.

\begin{proof}Theorem \ref{thm:clustertresses} is clearly true if the set $I$ contains only one
element, so let us take $i,j\in I$ such that $i\neq j$. Let us recall that
for every $I\subset[1,\ell]$ and every $w\in W_I$, we have the relation $w\leq w_0(I)$.
Therefore, there exist reduced expressions associated to
the element $w_0(I)\in W$ such that the $m_{ij}^{th}$ first letters are the
strings $\mathbf i(ij):={\overline i}\ {\overline j}\ {\overline i}\dots$ and
$\mathbf i(ji):={\overline j}\ {\overline i}\ {\overline j}\dots$ . Now, let
$\mathbf{i_0},\mathbf{j_0}\in R(w_0(I),w_0)$ be two double reduced words such
that their $m_{ij}^{\th}$ first letters are respectively the reduced words
$\mathbf i(ij)$ and $\mathbf i(ji)$, and let us denote $w_{ij},w_{ji}\in W$
the elements associated to $\mathbf i(ij)$ and $\mathbf i(ji)$. To prove the
theorem, we have thus to prove the equality
$\mathcal{T}_{w_{ij}}(\mathbf i)=\mathcal{T}_{w_{ji}}(\mathbf i)$.
To do it, we proceed in several steps, where each one proves the commutativity
of a given diagram. First of all,
applying Lemma~\ref{lemma:tormut} on the reduced words $\mathbf i(ij)$
and $\mathbf i(ji)$, we get the equality
$$\mu_{\mathbf i(ij)^{\square}\rightarrow\mathbf i(ji)^{\square}}
\circ\zeta_{\mathbf i(ij)}=\zeta_{\mathbf i(ji)}\circ
\mu_{\mathbf i(ij)\rightarrow\mathbf i(ji)}\ .$$
This relation is then extended to the double reduced words $\mathbf{i_0}$ and
$\mathbf{j_0}$, because mutations and left tropical mutations commute with
amalgamations done on the right. More precisely, let $\mathbf{i_1}$ and
$\mathbf{j_1}$ be the double reduced words such that $\mathbf{i_0}=\mathbf i(ij)\mathbf{i_1}$
and $\mathbf{j_0}=\mathbf i(ji)\mathbf{j_1}$. We introduce the notation
$\mathbf{i_0^{\boxtimes}}:=\mathbf i(ij)^{\square}\mathbf{i_1}$ and
$\mathbf{j_0^{\boxtimes}}:=\mathbf i(ji)^{\square}\mathbf{j_1}$
and get
$${\zeta_{\mathbf{i_0}(\leq\ell(w_{ij}))}}={\mu}_{\mathbf{j_0^{\boxtimes}}\to{\mathbf{i_0^{\boxtimes}}}}
\circ{\zeta_{\mathbf{j_0}(\leq\ell(w_{ji}))}}\circ{\mu}_{\mathbf{i_0}\to\mathbf {j_0}}\ .$$
Because of (\ref{equ:mucrochet}) and the equalities $\widehat{\mu}_{\mathbf{i_0}\to\mathbf {j_0}}
={\mu}_{[\mathbf{i_0}]_{\mathfrak{R}}\to[\mathbf {j_0}]_{\mathfrak{R}}}$ and 
$\widehat{\mu}_{\mathbf{j_0^{\boxtimes}}\to{\mathbf{i_0^{\boxtimes}}}}=
{\mu}_{[\mathbf{j_0^{\boxtimes}}]_{\mathfrak{R}}\to{[\mathbf{i_0^{\boxtimes}}]_{\mathfrak{R}}}}$, the previous equality implies the commutativity of the diagram
\begin{equation}\label{equ:synth1}\xymatrix{
{{\mathcal X}_{[\mathbf {i_0}]_{\mathfrak R}}}\ar@/_1pc/[d]_{{\zeta_{\mathbf{i_0}(\leq\ell(w_{ij}))}}}
\ar@/^1pc/[r]^{\widehat{\mu}_{\mathbf{i_0}\to\mathbf {j_0}}}
&{\mathcal X}_{[\mathbf {j_0}]_{\mathfrak R}}\ar@/^1pc/[d]^{{\zeta_{\mathbf{j_0}(\leq\ell(w_{ji}))}}}\\
{\mathcal X}_{[\mathbf {i_0^{\boxtimes}}]_{\mathfrak R}}
&{\mathcal X}_{[\mathbf {j_0^{\boxtimes}}]_{\mathfrak R}}\ar@/^1pc/[l]^{\widehat{\mu}_{\mathbf{j_0^{\boxtimes}}\to{\mathbf{i_0^{\boxtimes}}}}}
}
\end{equation}

Next, let $J$ be the maximal set, for the inclusion map,
of pairwise disjoint elements of the set $\{i,j,i^{\star},j^{\star}\}$.
(For example, we have $J=\{i,j\}$ if the involution $\star$ is the identity map,
$J=\{i,j,j^{\star}\}$ if $i=i^{\star}\neq j^{\star}\neq j$, and so on).
Lemma \ref{lemma:Tijtwist}, applied on $J$, then leads to the commutativity of the following diagrams.
\begin{equation}\label{equ:synth2}
\begin{array}{ccccc}\xymatrix{
{\mathcal X}_{[\mathbf i]_{\mathfrak R}}\ar@/_1pc/[d]_{{{\mathcal T}_{w_{ij}}(\mathbf i)}}
\ar@/^1pc/[r]_{\widehat{\mu}_{\mathbf{i}\to\mathbf {i_0}}}
&{{\mathcal X}_{[\mathbf {i_0}]_{\mathfrak R}}}\ar@/_1pc/[d]_{{\zeta_{\mathbf{i_0}(\ell(w_{ij}))}}}
\\
{\mathcal X}_{[\mathbf i]_{\mathfrak R}}\ar@/_1pc/[r]^{{\widehat\mu}_{\mathbf i\to{\mathbf{i_0^{\boxtimes}}}}}
&{\mathcal X}_{[\mathbf {i_0^{\boxtimes}}]_{\mathfrak R}}
}
&&&&\xymatrix{
{\mathcal X}_{[\mathbf {j_0}]_{\mathfrak R}}\ar@/^1pc/[d]^{{\zeta_{\mathbf{j_0}(\ell(w_{ji}))}}}
&{{\mathcal X}_{[\mathbf i]_{\mathfrak R}}}\ar@/^1pc/[d]^{{{\mathcal T}_{w_{ji}}(\mathbf i)}}
\ar@/_1pc/[l]^{{\widehat\mu}_{\mathbf{j_0}\to{\mathbf i}}}\\
{{\mathcal X}_{[\mathbf {j_0^{\boxtimes}}]_{\mathfrak R}}}
&{{\mathcal X}_{[\mathbf i]_{\mathfrak R}}}\ar@/^1pc/[l]_{\widehat{\mu}_{\mathbf i\to\mathbf{j_0^{\boxtimes}}}}
}
\end{array}
\end{equation}
Finally, for every $v\in W$ and $\mathbf i,\mathbf j,
\mathbf k\in D(v)$, the transitive equality
$\widehat{\mu}_{\mathbf i\to\mathbf k}=\widehat{\mu}_{\mathbf j\to\mathbf k}
\circ\widehat{\mu}_{\mathbf i\to\mathbf j}$ implies the commutativity of the
following diagrams, where $\Id$ denotes the identity map on ${\mathcal X}_{[\mathbf i]_{\mathfrak R}}$.

\begin{equation}\label{equ:synth3}\xymatrix{
{\mathcal X}_{[\mathbf i]_{\mathfrak R}}
\ar@/^1pc/[r]_{\widehat{\mu}_{\mathbf{i}\to\mathbf {i_0}}}\ar@/^3pc/[rrr]^{\Id}
&{{\mathcal X}_{[\mathbf {i_0}]_{\mathfrak R}}}
\ar@/^1pc/[r]^{\widehat{\mu}_{\mathbf{i_0}\to\mathbf {j_0}}}
&{{\mathcal X}_{[\mathbf {j_0}]_{\mathfrak R}}}
&{{\mathcal X}_{[\mathbf i]_{\mathfrak R}}}
\ar@/_1pc/[l]^{{\widehat\mu}_{\mathbf{j_0}\to{\mathbf i}}}
}
\xymatrix{
{{\mathcal X}_{[\mathbf i]_{\mathfrak R}}}\ar@/^1pc/[r]_{{\widehat\mu}_{\mathbf i\to{\mathbf{i_0^{\boxtimes}}}}}
&{{\mathcal X}_{[\mathbf {i_0^{\boxtimes}}]_{\mathfrak R}}}
&{{\mathcal X}_{[\mathbf {j_0^{\boxtimes}}]_{\mathfrak R}}}
\ar@/_1pc/[l]_{\widehat{\mu}_{\mathbf{i_0^{\boxtimes}}
\to{\mathbf{j_0^{\boxtimes}}}}}
&{{\mathcal X}_{[\mathbf i]_{\mathfrak R}}}\ar@/_1pc/[l]^{\widehat{\mu}_{\mathbf i\to\mathbf{j_0^{\boxtimes}}}}\ar@/_3pc/[lll]_{\Id}
}
\end{equation}
We now incorporate the previous diagram in the synthesis
diagram (\ref{equ:syn}), via the following three steps procedure:
$1)$ put the diagram (\ref{equ:synth1}) between the two diagrams constituting  (\ref{equ:synth2})
and identify the arrows which have the same transformation as labeling;
$2)$ put the left diagram of (\ref{equ:synth3}) upside the new diagram thus obtained and identify
the arrows which have the same transformation as labeling again;
$3)$ put the right diagram of (\ref{equ:synth3}) downside the diagram, just as before and still
identify the arrows which have the same transformation as labeling.
The relation $\mathcal{T}_{w_{ij}}(\mathbf i)=\mathcal{T}_{w_{ji}}(\mathbf i)$ is then given by
the boundary of the diagram ~(\ref{equ:syn}).
\begin{equation}
\label{equ:syn}
\xymatrix{
{\mathcal X}_{[\mathbf i]_{\mathfrak R}}\ar@/_1pc/[d]_{{{\mathcal T}_{w_{ij}}(\mathbf i)}}
\ar@/^1pc/[r]_{\widehat{\mu}_{\mathbf{i}\to\mathbf {i_0}}}\ar@/^3pc/[rrr]^{\Id}
&{{\mathcal X}_{[\mathbf {i_0}]_{\mathfrak R}}}\ar@/_1pc/[d]_{{\zeta_{\mathbf{i_0}(\ell(w_{ij}))}}}
\ar@/^1pc/[r]^{\widehat{\mu}_{\mathbf{i_0}\to\mathbf {j_0}}}
&{{\mathcal X}_{[\mathbf {j_0}]_{\mathfrak R}}}\ar@/^1pc/[d]^{{\zeta_{\mathbf{j_0}(\ell(w_{ji}))}}}
&{{\mathcal X}_{[\mathbf i]_{\mathfrak R}}}\ar@/^1pc/[d]^{{{\mathcal T}_{w_{ji}}(\mathbf i)}}
\ar@/_1pc/[l]^{{\widehat\mu}_{\mathbf{j_0}\to{\mathbf i}}}\\
{{\mathcal X}_{[\mathbf i]_{\mathfrak R}}}\ar@/_1pc/[r]^{{\widehat\mu}_{\mathbf i\to{\mathbf{i_0^{\boxtimes}}}}}
&{{\mathcal X}_{[\mathbf {i_0^{\boxtimes}}]_{\mathfrak R}}}
&{\mathcal X}_{[\mathbf {j_0^{\boxtimes}}]_{\mathfrak R}}\ar@/^1pc/[l]^{\widehat{\mu}_{\mathbf{j_0^{\boxtimes}}\to{\mathbf{i_0^{\boxtimes}}}}}
&{{\mathcal X}_{[\mathbf i]_{\mathfrak R}}}\ar@/^1pc/[l]_{\widehat{\mu}_{\mathbf i\to\mathbf{j_0^{\boxtimes}}}}\ar@/^3pc/[lll]^{\Id}
}
\end{equation}
\end{proof}

\subsection{The De-Concini-Kac-Procesi automorphisms}
Following \cite{dCKP} and \cite{B}, we define for every $j\in [1,\ell]$ and
$U_j=\exp(\mathfrak{g}_{\alpha_j})$ the homomorphism
$\xi_{j}:N_-\to U_j$
such that if $n_-\in N_-$ is factorized as a product of $u_{\beta}\in U_{\beta}$
for every negative root $\beta$ (each $\beta$ appearing only once),
then $\xi_{j}(n_-):=u_{\alpha_j}$.
Let us  recall that, as a set, the dual Poisson-Lie group $(G^*,\pi_{G^*})$ is given by
elements $(n_+t,n_-t^{-1})$ such that $n_{\pm}\in N_{\pm}$ and $t\in H$ and is isomorphic
to the set of elements $n_+t^2n_-^{-1}\in G$, where we have still $n_{\pm}\in N_{\pm}$
and $t\in H$. We denote $G^0$ this set; we therefore get a Poisson isomorphism
$(G^*,\pi_{G^*})\simeq(G^0,\pi_*)$.
For every $j\in[1,\ell]$, we denote $b^j_-:=\xi_{j}(n_-)^{-1}$ and
recall the De-Concini-Kac-Procesi automorphism
\begin{equation}\label{equ:DCKP}T_j:G^0\rightarrow G^0:n_+t^2n_-^{-1} \mapsto
\widehat{s_j}\ b^j_-\ n_+t^2n_-^{-1}\ {(\widehat{s_j}\ b^j_-)}^{-1}\ .
\end{equation}
The cluster combinatorics of the De Concini-Kac-Procesi Poisson automorphisms
$T_j$ is established via the following results.

\begin{prop}\label{lemma:tresse}Let $\mathbf{i}$ be a double word, such that
$\mathbf i=\mathbf{i_1}\mathbf{i_2}$ with $\mathbf{i_1},\mathbf{i_2}
\in R(1,w_0)$, starting with the letter ${j}$,
then $T_j=\widehat{\ev}_{\mathfrak{L}(\mathbf {i})}\circ{\mu_{\binom j{0}}}
\circ{{\widehat\ev}^{-1}_{\mathbf{i}}}$.
\end{prop}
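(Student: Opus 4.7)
\smallskip
The plan is to fix $\mathbf x \in {\mathcal X}_{[\mathbf{i}]_{\mathfrak R}}$, set $g := \widehat{\ev}_{\mathbf{i}}(\mathbf x)\in G^0$, decompose $g = n_+ t^2 n_-^{-1}$, and show directly that
\[
\widehat{\ev}_{\mathfrak{L}(\mathbf{i})}\bigl(\mu_{\binom{j}{0}}(\mathbf x)\bigr)
= \widehat{s_j}\,\xi_j(n_-)^{-1}\, g\, \xi_j(n_-)\,\widehat{s_j}^{-1},
\]
which, by~(\ref{equ:DCKP}), is precisely $T_j(g)$. Thus the proposition reduces to an explicit comparison based on the formulas of Subsection~\ref{section:cluster*} and the local identity of Lemma~\ref{equ:trop}.

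\smallskip
Since $j$ is the first letter of $\mathbf{i_1}$ and mutations commute with amalgamation on the right, the $\tau$-move $\mathfrak{L}$ and the tropical mutation $\mu_{\binom{j}{0}}$ act only on the $\mathbf{i_1}$-factor of the amalgamation $\mathbf x = \mathfrak m(\mathbf x_1, \mathbf x_2)$. Unfolding~(\ref{equ:evhat}), both $\ev^{\mathfrak L}_{\mathbf i}(\mathbf x)$ and $\ev^{\mathfrak R}_{\mathbf i}(\mathbf x)$ have the form $\ev_{\mathbf{i_1}}(\mathbf x_1) \cdot (\text{factor depending only on } \mathbf x_2)$, so the effect of $\mathfrak{L}$ and $\mu_{\binom{j}{0}}$ on $\widehat{\ev}_{\mathbf i}$ reduces to the substitution $\ev_{\mathbf{i_1}}(\mathbf x_1) \leadsto \ev_{\mathfrak{L}(\mathbf{i_1})}(\mu_{\binom{j}{0}}(\mathbf x_1))$.

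\smallskip
The second identity of Lemma~\ref{equ:trop}, applied to $\mathbf{i_1} \in R(1, w_0)$ (so $u = e$, $i_1 = j$), gives $[\ev_{\mathbf{i_1}}(\mathbf x_1)]_{\geq 0} = [\widehat{s_j}^{-1} \ev_{\mathfrak{L}(\mathbf{i_1})}(\mu_{\binom{j}{0}}(\mathbf x_1))]_{\geq 0}$, which is equivalent to
\[
\ev_{\mathfrak{L}(\mathbf{i_1})}(\mu_{\binom{j}{0}}(\mathbf x_1)) = \widehat{s_j}\,\eta\,\ev_{\mathbf{i_1}}(\mathbf x_1)
\]
for some $\eta = \eta(\mathbf x_1) \in N_-$. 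A direct $SL(2,\mathbb C)$-computation inside $\varphi_j(SL_2)$ (where only the leading Chevalley factor $E^j$ is replaced by $F^j$) shows that $\eta$ in fact lies in the one-parameter subgroup $U_{-\alpha_j}$. Substituting this back into $\widehat{\ev}_{\mathfrak{L}(\mathbf i)}(\mu_{\binom{j}{0}}(\mathbf x))$, while observing that $\ev_{\mathbf 1}(\mathbf x(\mathfrak R))$, $\widehat{w_0}$, and the $\mathbf x_2$-dependent tails are untouched, the prefactor $\widehat{s_j}\,\eta$ appears on the left of $g$ in $\ev^{\mathfrak L}_{\mathfrak{L}(\mathbf i)}$ and on the right of $g$ under inversion of $\ev^{\mathfrak R}_{\mathfrak{L}(\mathbf i)}$, yielding $\widehat{\ev}_{\mathfrak{L}(\mathbf i)}(\mu_{\binom{j}{0}}(\mathbf x)) = \widehat{s_j}\,\eta\, g\, \eta^{-1}\,\widehat{s_j}^{-1}$.

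\smallskip
The principal obstacle is the identification $\eta = \xi_j(n_-)^{-1}$. Because $\eta \in U_{-\alpha_j}$, it is determined by a single scalar parameter, so the task is to match this scalar with the coefficient of $f_j$ in $\log(\xi_j(n_-)^{-1})$, i.e.\ with the simple root-$j$ component of $n_-$ read off the Gauss decomposition of $g$. The key geometric fact is that no other term in~(\ref{equ:evhat}) can contribute a $U_{-\alpha_j}$-factor to $n_-$: the projections $[\,\cdot\,]_{\leq 0}$ and the conjugation by $\widehat{w_0}$ appearing in $\ev^{\mathfrak R}_{\mathbf{i_2}}$ and $\ev^{\mathfrak L}_{\mathbf{i_2}}$ keep all residual root directions away from $-\alpha_j$, so that the simple-root-$j$ component of $n_-$ is entirely produced by the $\ev_{\mathbf{i_1}}$-factor, which is exactly what $\eta$ records. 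This bookkeeping is the technically delicate step; once completed, (\ref{equ:DCKP}) yields the proposition.
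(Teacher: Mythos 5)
Your opening steps coincide with the paper's own argument: you reduce the action of $\mathfrak{L}$ and $\mu_{\binom{j}{0}}$ to the $\mathbf{i_1}$-factor of (\ref{equ:evhat}) (both $\ev^{\mathfrak L}_{\mathbf i}$ and $\ev^{\mathfrak R}_{\mathbf i}$ indeed have $\ev_{\mathbf{i_1}}(\mathbf x_1)$ as their left factor, so the whole effect is conjugation), and you produce, via an $SL_2$-computation inside $\varphi_j(SL_2)$, a factor $\eta$ in the one-parameter subgroup generated by $f_j$ such that $\widehat{\ev}_{\mathfrak{L}(\mathbf i)}\circ\mu_{\binom{j}{0}}=(\widehat{s_j}\,\eta)\,\widehat{\ev}_{\mathbf i}\,(\widehat{s_j}\,\eta)^{-1}$. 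This is exactly what the paper does with the identity $\widehat{s_j}^{-1}x_{\overline{j}}(t)=x_{\overline{j}}(-t^{-1})\exp(\log(t)h_j)x_j(t^{-1})$ and its consequence (\ref{equ:sitrop}).

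The gap is your final step, the identification $\eta=\xi_j(n_-)^{-1}$, which you yourself flag as ``the technically delicate step'' and then justify only heuristically; that heuristic does not hold up. The factor $\ev_{\mathbf{i_1}}(\mathbf x_1)$ lies in $B$, so it has no $N_-$-component of its own; the element $n_-$ exists only through the global decomposition $g=n_+t^2n_-^{-1}$ of the full product $\ev_{\mathbf{i_1}}(\mathbf x_1)\,L\,\ev_{\mathbf 1}(\mathbf x(\mathfrak R))\,\widehat{w_0}\,R^{-1}\,\ev_{\mathbf{i_1}}(\mathbf x_1)^{-1}$, in which the right-hand conjugating factor (again in $B$), the Cartan part, and $\widehat{w_0}$ mix all root directions. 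There is no term-by-term ``root separation'' of the kind you invoke, and you offer no mechanism to make the claim ``the $-\alpha_j$ component of $n_-$ is entirely produced by the $\ev_{\mathbf{i_1}}$-factor, which is exactly what $\eta$ records'' precise. The paper fills exactly this hole with Lemma \ref{cor:beta} (i.e.\ [RB, Lemma 9.10]): $[[\widehat{\ev}_{\mathbf{i}}(\mathbf x)]]_-^{-1}=\tau_{{\mathbf{i_1^{\star}}}}(\mathbf{x}_{(1)}^{\star})$, an explicit ordered factorization of the $N_-$-part of $g$ into root-group factors, one for each negative root, with explicit cluster-variable parameters. Applying the definition of $\xi_j$ to that factorization (the relevant factor is singled out by the root identity $w_0s_{j^{\star}}(\alpha_{j^{\star}})=\alpha_{j}$) and converting the $\star$-variables back via Lemma \ref{lemma:w_0} yields $b^j_-$ in closed form, namely the same explicit element $x_{\overline{j}}(\,\cdot\,)$ that the $SL_2$ identity gives for $\eta$. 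In other words, the equality $\eta=b^j_-$ is proved by computing both sides explicitly, and the computation of the $b^j_-$ side rests on a nontrivial factorization theorem from the prequel. Without invoking Lemma \ref{cor:beta} (or re-proving it), your argument is incomplete precisely at its decisive step.
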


\begin{cor}\label{lemma:tresse2}
We have the equality $T_j=\widehat{\ev}_{\mathbf {i}}\circ{\mathcal T}_j({\mathbf i})
\circ{{\widehat\ev}^{-1}_{\mathbf{i}}}$ for every $\mathbf{i}\in D(w_0)$.
\end{cor}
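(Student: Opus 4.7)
The plan is to reduce the statement, for arbitrary $\mathbf i\in D(w_0)$, to the special case already handled by Proposition~\ref{lemma:tresse}, with Theorem~\ref{thm:ev*} acting as the transport mechanism between different elements of $D(w_0)$.

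First, I would unfold the definition~(\ref{equ:defti}) of $\mathcal T_j(\mathbf i)$, namely
\[\mathcal T_j(\mathbf i)=\widehat{\mu}_{\mathfrak L(\mathbf{i_0})\to\mathbf i}\circ\mu_{\binom{j}{0}}\circ\widehat{\mu}_{\mathbf i\to\mathbf{i_0}},\]
where $\mathbf{i_0}\in R(w_0,w_0)$ is the auxiliary reduced word starting with $\overline j$. Conjugating by $\widehat{\ev}_{\mathbf i}$ and using Theorem~\ref{thm:ev*} applied to the pairs $(\mathbf i,\mathbf{i_0})$ and $(\mathbf i,\mathfrak L(\mathbf{i_0}))$ (both in $D(w_0)$), the outer $\widehat{\mu}$-factors recombine with the corresponding $\widehat{\ev}_{\mathbf i}^{\pm 1}$ to give
\[\widehat{\ev}_{\mathbf i}\circ\mathcal T_j(\mathbf i)\circ\widehat{\ev}_{\mathbf i}^{-1}=\widehat{\ev}_{\mathfrak L(\mathbf{i_0})}\circ\mu_{\binom{j}{0}}\circ\widehat{\ev}_{\mathbf{i_0}}^{-1}.\]
The right-hand side is visibly independent of $\mathbf i$, and the problem reduces to matching it with $T_j$.

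Second, I would fix a reference double word $\mathbf{i^*}=\mathbf{i_1}\mathbf{i_2}$ with $\mathbf{i_1},\mathbf{i_2}\in R(1,w_0)$ starting with $j$, to which Proposition~\ref{lemma:tresse} applies and yields $T_j=\widehat{\ev}_{\mathfrak L(\mathbf{i^*})}\circ\mu_{\binom{j}{0}}\circ\widehat{\ev}_{\mathbf{i^*}}^{-1}$. A further application of Theorem~\ref{thm:ev*}, relating $\mathbf{i_0}$ with $\mathbf{i^*}$ and $\mathfrak L(\mathbf{i_0})$ with $\mathfrak L(\mathbf{i^*})$ inside $D(w_0)$, reduces the equality of the two formulas to the commutation
\[\widehat{\mu}_{\mathfrak L(\mathbf{i_0})\to\mathfrak L(\mathbf{i^*})}\circ\mu_{\binom{j}{0}}=\mu_{\binom{j}{0}}\circ\widehat{\mu}_{\mathbf{i_0}\to\mathbf{i^*}}\]
between the left tropical mutation $\mu_{\binom{j}{0}}$ of Lemma~\ref{prop:muttrop} and the $\widehat d$-move compositions furnished by Theorem~\ref{thm:ev*}.

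The main obstacle will be this last commutation, together with the subsidiary point of making sense of $\widehat{\mu}$'s whose source or target is the non-reduced word $\mathfrak L(\mathbf{i_0})$. The compatibility itself should follow from the general principle that ordinary mutations commute with $\widehat d$-moves performed away from the mutated vertex, exactly as encoded in the commutative diagrams used in the proof of Theorem~\ref{thm:clustertresses}, applied here to the single mutation $\mu_{\binom{j}{0}}$.
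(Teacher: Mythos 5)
Your outline does reproduce the paper's (very terse) proof — the paper's entire argument is the sentence that the corollary ``is deduced from Theorem \ref{thm:ev*}, the formula (\ref{equ:defti}) and Proposition \ref{lemma:tresse}'' — and your first display is the right way to absorb the outer $\widehat{\mu}$-factors. The problem is the commutation you reduce everything to: it is not merely the hard remaining step, it is \emph{false}, and the paper's own rank-one example shows it. Take $G=\PGL(2,\mathbb C)$, $j=1$, $\mathbf{i_0}=\overline{1}1$, $\mathbf{i^*}=11$, so that $\mathfrak{L}(\mathbf{i_0})=11=\mathbf{i^*}$ and $\mathfrak{L}(\mathbf{i^*})=\overline{1}1=\mathbf{i_0}$. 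Since the two left tropical mutations $\mu_{\binom{1}{0}}\colon{\mathcal X}_{[\overline{1}1]_{\mathfrak R}}\to{\mathcal X}_{[11]_{\mathfrak R}}$ and $\mu_{\binom{1}{0}}\colon{\mathcal X}_{[11]_{\mathfrak R}}\to{\mathcal X}_{[\overline{1}1]_{\mathfrak R}}$ are mutually inverse (tropical mutations are involutions), the expression produced by your first step satisfies
\begin{equation*}
\widehat{\ev}_{\mathfrak{L}(\mathbf{i_0})}\circ\mu_{\binom{1}{0}}\circ\widehat{\ev}_{\mathbf{i_0}}^{-1}
=\Bigl(\widehat{\ev}_{\overline{1}1}\circ\mu_{\binom{1}{0}}\circ\widehat{\ev}_{11}^{-1}\Bigr)^{-1}
=T_1^{-1}\ ,
\end{equation*}
the last equality being Proposition \ref{lemma:tresse} applied to $\mathbf{i^*}=11$. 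Hence the identity you still need — equivalently your commutation, which here reads $\Xi_{s_1}^{-1}\circ\mu_{\binom{1}{0}}=\mu_{\binom{1}{0}}\circ\Xi_{s_1}$ since $\widehat{\mu}_{\overline{1}1\to 11}=\Xi_{s_1}$ — asserts $T_1^{-1}=T_1$. This contradicts the closing remark of Section \ref{section:SL2} (${\mathcal T}_1(11)^2\neq\Id$, so the center acts nontrivially); concretely, with the explicit formulas there the two sides of the commutation differ by the monomial $y_1^{-2}t$ in the first coordinate.

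The failure is one of \emph{orientation}, not of distance, so it cannot be repaired by the principle that mutations commute with moves performed away from the mutated vertex: in the rank-one counterexample the relevant $\widehat{\mu}$ can even be realized as the single ordinary mutation $\mu_{\binom{1}{1}}$, at a vertex different from $\binom{1}{0}$, and the commutation still fails, because that mutation (like the dual-move maps $\Xi_{s_i}$) genuinely transforms the coordinate $x_{\binom{j}{0}}$. Proposition \ref{lemma:tresse} conjugates $\mu_{\binom{j}{0}}$ oriented from a word beginning with the \emph{positive} letter $j$ towards its $\mathfrak L$-image; unfolding (\ref{equ:defti}) literally, as you did, produces the conjugation with the opposite orientation, from $\mathbf{i_0}$ (beginning with $\overline{j}$) towards $\mathfrak L(\mathbf{i_0})$, and the two orientations give mutually inverse automorphisms of $G^0$, which differ precisely because $T_j$ is not an involution. (The paper shares the blame: formula (\ref{equ:defti}) read literally has this inverted orientation, whereas its own example computes ${\mathcal T}_1(11)=\Xi_{s_1}\circ\mu_{\binom{1}{0}}$, with the mutation based at the positive word — the orientation compatible with $T_j$.) A correct completion of your plan must therefore (i) put both expressions in the same orientation, comparing $\widehat{\ev}_{\mathfrak L(\mathbf w)}\circ\mu_{\binom{j}{0}}\circ\widehat{\ev}_{\mathbf w}^{-1}$ for $\mathbf w=\mathbf{i^*}$ (which is $T_j$ by Proposition \ref{lemma:tresse}) and for $\mathbf w=\mathfrak L(\mathbf{i_0})$ (which is what a correctly oriented (\ref{equ:defti}) yields), and then (ii) prove that this expression is independent of such $\mathbf w$. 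Point (ii) is exactly the parenthetical claim in (\ref{equ:defti}) that ``the final result doesn't depend of this particular choice'', which the paper asserts but never proves; it requires a genuine argument (for instance extending the proof of Proposition \ref{lemma:tresse} to the words $\mathfrak L(\mathbf{i_0})$, or establishing the precise intertwining of the maps $\Xi_{s_i}$ with left tropical mutations), and your proposal does not supply one.
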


Corollary \ref{lemma:tresse2} is deduced from Theorem \ref{thm:ev*}, the formula
(\ref{equ:defti}) and Proposition \ref{lemma:tresse}. To prove Proposition
\ref{lemma:tresse}, we need some preparation.

Let $u,v\in W$ and $\mathbf i\in R(u,v)$. The double reduced word
$\mathbf{i^{\star}}\in R({v^\star},{u^\star})$ is obtained by transforming
each letter $i$ of $[1,\ell]\cup[\overline{1},\overline{\ell}]$ into $\overline{i^\star}$,
so if $\mathbf i=i_1\dots i_n$ then $\mathbf i^{\star}=\overline{i_1}^{\star}
\dots\overline{i_n}^{\star}$. Starting with an elementary double word
$\mathbf i\in\{\mathbf 1,i,\overline i\}$, where $i\in[1,\ell]$, and then applying the
properties of the amalgamated product, we easily prove the following lemma.

\begin{lemma}\label{lemma:w_0} Let $u,v\in W$ and $\mathbf i\in R(u,v)$.
For every cluster $\mathbf x\in{\mathcal X}_{\mathbf i}$, let $\mathbf{x^{\star}}
\in{\mathcal X}_{\mathbf{i^\star}}$ be such that the equality
$\widehat{w_0}\ev_{\mathbf i}(\mathbf x)\widehat{w_0}^{-1}=\ev_{\mathbf{i^\star}}
(\mathbf{x^{\star}})$ is satisfied.
Then we have
\begin{equation}\label{equ:w_0}x^{\star}_{\binom i{j}}=\left\{
\begin{array}{rl}
-x_{\binom {i^\star}{j}}^{-1}&\mbox{if } 0=j\neq N^{i^\star}(\mathbf i)
\mbox{ or } 0\neq j= N^{i^\star}(\mathbf i);\\
x_{\binom {i^\star}{j}}^{-1}&\mbox{otherwise}.
\end{array}
\right.
\end{equation}
\end{lemma}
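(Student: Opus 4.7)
The plan is to prove the lemma by induction on the number of letters of $\mathbf i$, with base cases $\mathbf i\in\{\mathbf 1,i,\overline i\}$ handled by direct computation and the induction step carried out via the amalgamated product, as suggested in the paper itself.

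The key Lie-theoretic inputs for the base cases are the conjugation formulas $\widehat{w_0}\,h^j\,\widehat{w_0}^{-1}=-h^{j^\star}$ (a consequence of $w_0(\omega_j)=-\omega_{j^\star}$, yielding $\widehat{w_0}\,H^j(x)\,\widehat{w_0}^{-1}=H^{j^\star}(x^{-1})$), together with $\widehat{w_0}\,e_i\,\widehat{w_0}^{-1}=-f_{i^\star}$ and $\widehat{w_0}\,f_i\,\widehat{w_0}^{-1}=-e_{i^\star}$ (coming from $\widehat{w_0}=\widehat{s_{i_1}}\cdots\widehat{s_{i_n}}$ and $\widehat{s_i}\,e_i\,\widehat{s_i}^{-1}=-f_i$). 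Combining with the identity $\exp(-f_i)=H^i(-1)F^iH^i(-1)$ extracted from the embedding $\varphi_i$, one obtains $\widehat{w_0}\,E^i\,\widehat{w_0}^{-1}=H^{i^\star}(-1)F^{i^\star}H^{i^\star}(-1)$ and a symmetric formula for $F^i$. For $\mathbf i=\mathbf 1$ one reads off $x^\star_{\binom j0}=x_{\binom{j^\star}0}^{-1}$, matching the ``otherwise'' branch of~(\ref{equ:w_0}) since $N^{j^\star}(\mathbf 1)=0$. For $\mathbf i=i$, the two factors $H^{i^\star}(-1)$ are absorbed into the adjacent $H^{i^\star}(\cdot)$ on either side of $F^{i^\star}$, producing precisely the two minus signs dictated by the first branch of~(\ref{equ:w_0}) at the extremal positions $\binom{i^\star}{0}$ and $\binom{i^\star}{1}$; the case $\mathbf i=\overline i$ is symmetric.

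For the induction step, write $\mathbf i=\mathbf{i_1}\mathbf{i_2}$ with $\mathbf z=\mathfrak{m}(\mathbf{x_1},\mathbf{x_2})$, so that $\ev_{\mathbf i}(\mathbf z)=\ev_{\mathbf{i_1}}(\mathbf{x_1})\ev_{\mathbf{i_2}}(\mathbf{x_2})$. Conjugating by $\widehat{w_0}$ and applying the inductive hypothesis to each factor, combined with $\mathbf{i^\star}=\mathbf{i_1^\star}\mathbf{i_2^\star}$ and associativity of amalgamation, yields
\[\widehat{w_0}\,\ev_{\mathbf i}(\mathbf z)\,\widehat{w_0}^{-1}=\ev_{\mathbf{i_1^\star}}(\mathbf{x_1^\star})\,\ev_{\mathbf{i_2^\star}}(\mathbf{x_2^\star})=\ev_{\mathbf{i^\star}}\bigl(\mathfrak{m}(\mathbf{x_1^\star},\mathbf{x_2^\star})\bigr).\]
It then suffices to check that the cluster coordinates of $\mathfrak{m}(\mathbf{x_1^\star},\mathbf{x_2^\star})$ agree with those prescribed by~(\ref{equ:w_0}) applied directly to $\mathbf z$.

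The main obstacle is reconciling the amalgamation formula at the interface index $k=N^i(\mathbf{i_1^\star})=N^{i^\star}(\mathbf{i_1})$ with the sign pattern of~(\ref{equ:w_0}). The amalgamation on the $\star$-side gives $z^\star_{\binom ik}=x^\star_{1,\binom ik}\,x^\star_{2,\binom i0}$; in the generic case $N^{i^\star}(\mathbf{i_1}),N^{i^\star}(\mathbf{i_2})>0$ the inductive hypothesis places both factors in the first branch of~(\ref{equ:w_0}) (as ``last of $\mathbf{i_1^\star}$'' and ``first of $\mathbf{i_2^\star}$'' respectively), so the two minus signs cancel to give no sign at this interior position of $\mathbf{i^\star}$, which is exactly what the ``otherwise'' branch prescribes for $\mathbf z$. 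In the degenerate subcases where $i^\star$ is absent from exactly one of $\mathbf{i_1}$ or $\mathbf{i_2}$, one of the two factors lands in the ``otherwise'' branch, so only a single minus sign survives and correctly falls on one of the two extremal positions of $\mathbf{i^\star}$; when $i^\star$ is absent from both, there is a single trivial column with $N^{i^\star}(\mathbf i)=0$ and no sign appears, again matching~(\ref{equ:w_0}). This sign bookkeeping is the technical heart of the argument but, once catalogued case by case, is mechanical.
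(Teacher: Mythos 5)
Your proof is correct and follows precisely the route the paper itself indicates for this lemma: verify the formula on the elementary double words $\mathbf 1$, $i$, $\overline{i}$ using the conjugation formulas $\widehat{w_0}H^j(x)\widehat{w_0}^{-1}=H^{j^\star}(x^{-1})$, $\widehat{w_0}E^i\widehat{w_0}^{-1}=H^{i^\star}(-1)F^{i^\star}H^{i^\star}(-1)$ (and its counterpart for $F^i$), and then propagate through the amalgamated product, checking the sign cancellation at the interface indices. The paper only sketches this (``Starting with an elementary double word \dots and then applying the properties of the amalgamated product, we easily prove the following lemma''), and your write-up, including the case analysis at the amalgamation interface, correctly fills in that sketch.
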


A \emph{split} of a seed $\mathbf I$ is a pair
of seeds $(\mathbf I_1,\mathbf I_2)$ such that $\mathbf I$ is their
amalgamated product, that is $\mathbf I=\mathfrak{m}(\mathbf I_1,\mathbf I_2)$.
An associated \emph{$\mathcal X$-split}
is a section of the amalgamation map $\mathfrak{m}:{\mathcal X}_{\mathbf I_1}
\times{\mathcal X}_{\mathbf I_2}\rightarrow{\mathcal X}_{\mathbf I}$, i.e.
a map $\mathfrak{s}:{\mathcal X}_{\mathbf I}\rightarrow{\mathcal X}_{\mathbf I_1}
\times{\mathcal X}_{\mathbf I_2}$ such that the product $\mathfrak{m}\circ\mathfrak{s}$
gives the identity map on ${\mathcal X}_{\mathbf I}$.
For every $\mathcal X$-split $\mathfrak{s}$ associated to the decomposition
$\mathbf I\to(\mathbf I_1,\mathbf I_2)$, we associate to any
$\mathbf x\in{\mathcal X}_{\mathbf I}$, some elements $\mathbf x_{(1)}
\in{\mathcal X}_{\mathbf{I_1}}$ and $\mathbf x_{(2)}\in{\mathcal X}_{\mathbf{I_2}}$
given by $\mathfrak{s}(\mathbf{x})=(\mathbf x_{(1)},\mathbf x_{(2)})$.

Now, for every reduced word $\mathbf i=\overline i_1\dots\overline i_{\ell(w_0)}\in R(w_0,1)$,
and every $k\in[1,\ell(w_0)]$, let us set $w_{\mathbf i_{>k}}:=s_{i_{k+1}}\dots s_{i_{\ell(w_0)}}$.
To every $\mathbf x\in{\mathcal X}_{\mathbf i}$, we associate the following product of $u_{\beta}\in U_{\beta}$
over negative roots, which is such that every negative root $\beta$ appears exactly once.
$$\begin{array}{ccc}
\tau_{\mathbf i}(\mathbf x)=\displaystyle\prod_{k=1}^{\ell(w_0)}
{\widehat{w}_{\mathbf i_{>k}}}^{-1}x_{\overline{i_k}}(-x_{\zeta_{\leq k-1}\binom{i_k}{0}}^{-1})\widehat{w}_{\mathbf i_{>k}}\ ,
&\mbox{where}&x_{\overline{i}} (t)=\varphi_i\left(
\begin{array}{cc}
1 & 0\\
t & 1
\end{array}
\right).
\end{array}$$
Here, the map $\zeta_{\leq k-1}:{\mathcal X}_{\mathbf i}\to{\mathcal X}_{\mathbf i(k-1)}$
is obtained by generalizing the formula (\ref{equpartialzeta}):
$$\zeta_{\leq k-1}=\zeta_{\mathbf i(1)}\circ\dots\circ\zeta_{\mathbf i(k-1)}\ .$$
In particular, the equality $\zeta_{\leq \ell(w_0)}=\zeta_{\mathbf i}$ is satisfied.

\begin{lemma}\cite[Lemma 9.10]{RB}\label{cor:beta} Let $\mathbf{i}$ be a double word such
that $\mathbf i=\mathbf{i_1}\mathbf{i_2}$ with $\mathbf{i_1},\mathbf{i_2}\in R(1,w_0)$,
and $\mathfrak{s}$ be a $\mathcal X$-split relative to the decomposition $\mathbf i\to(\mathbf{i_1},\mathbf{i_2})$.
We have the equality $[[\widehat{\ev}_{\mathbf{i}}(\mathbf x)]]_-^{-1}=\tau_{{\mathbf{i_1^{\star}}}}
(\mathbf{x}_{(1)}^{\star})$.
\end{lemma}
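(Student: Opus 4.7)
The plan is to expand $\widehat{\ev}_{\mathbf i}(\mathbf x)$ via (\ref{equ:evhat}), use Lemma \ref{lemma:w_0} to transfer the central conjugation by $\widehat{w_0}$ into a $\star$-twist of the cluster variables, and then match the rightmost $N_-$-factor (computed in the $G^0$-parametrization $n_+t^2n_-^{-1}$) against the factorized expression for $\tau_{\mathbf{i_1^{\star}}}$, by induction on $\ell(w_0)$.

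First, because $\mathbf{i_1},\mathbf{i_2}\in R(1,w_0)$ we have $w_1=w_2=1$, so both $\ev^{\mathfrak L}_{\mathbf i}(\mathbf x)$ and $\ev^{\mathfrak R}_{\mathbf i}(\mathbf x)$ admit the same left prefix $A:=\ev_{\mathbf{i_1}}(\mathbf x_{(1)})\in G^{1,w_0}\subset B_+$ (by Theorem \ref{thm:evG}). Collecting the remaining middle pieces $P\in B_+$, $T\in H$ and $\widehat{w_0}Q^{-1}\widehat{w_0}^{-1}\in B_+$ into a single element $M\in B_+$, the definition becomes $\widehat{\ev}_{\mathbf i}(\mathbf x)=AM\widehat{w_0}A^{-1}$. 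Applying Lemma \ref{lemma:w_0} to $\mathbf{i_1}\in R(1,w_0)$ rewrites $\widehat{w_0}A^{-1}=\ev_{\mathbf{i_1^{\star}}}(\mathbf x_{(1)}^{\star})^{-1}\widehat{w_0}$, whence
\begin{equation*}
\widehat{\ev}_{\mathbf i}(\mathbf x)=(AM)\cdot\ev_{\mathbf{i_1^{\star}}}(\mathbf x_{(1)}^{\star})^{-1}\cdot\widehat{w_0},
\end{equation*}
with $AM\in B_+$ and $\ev_{\mathbf{i_1^{\star}}}(\mathbf x_{(1)}^{\star})^{-1}\in B_-$ (since $\mathbf{i_1^{\star}}\in R(w_0,1)$). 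Since left multiplication by an element of $B_+$ does not alter the rightmost $N_-$ factor of an element in the $G^0$-parametrization, the claim reduces to identifying the $N_-$ component of $\ev_{\mathbf{i_1^{\star}}}(\mathbf x_{(1)}^{\star})^{-1}\widehat{w_0}$.

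To carry out the matching, I induct on $\ell(w_0)=|\mathbf{i_1}|$. Peeling off the leading letter $\overline{i_1^{\star}}$, the amalgamation formula writes $\ev_{\mathbf{i_1^{\star}}}(\mathbf x_{(1)}^{\star})$ as a leading block $H^{i_1^{\star}}(x^{\star}_{\binom{i_1^{\star}}{0}})\,F^{i_1^{\star}}\,H^{i_1^{\star}}((x^{\star}_{\binom{i_1^{\star}}{0}})^{-1})$ times the evaluation of the tail word. By (\ref{equ:w_0}) the initial starred coordinate satisfies $x^{\star}_{\binom{i_1^{\star}}{0}}=-x_{\binom{i_1}{0}}^{-1}$, which matches the coefficient of the $k=1$ factor in $\tau_{\mathbf{i_1^{\star}}}$. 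Commuting the ensuing Cartan factors past the remaining $F^{i_k^{\star}}$'s substitutes the cluster coordinates by iterated left tropical mutations, and by Lemma \ref{equ:trop} together with Corollary \ref{lemma:twist} these substitutions are precisely the maps $\zeta_{\leq k-1}$ appearing in the $k$-th factor of $\tau_{\mathbf{i_1^{\star}}}$. Conjugation by the Weyl element $\widehat{w}_{\mathbf i^{\star}_{>k}}$ places each block into its correct negative root subgroup, and the inductive hypothesis applied to the tail word produces the remaining factors.

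The main obstacle is the explicit bookkeeping in step three: verifying that the argument $-x^{-1}_{\zeta_{\leq k-1}\binom{i_k}{0}}$ appearing in the $k$-th factor of $\tau_{\mathbf{i_1^{\star}}}$ is exactly the one produced at stage $k$ by the combination of the $\star$-twist (\ref{equ:w_0}) with the successive left tropical mutations generated by pushing Cartan factors past the Chevalley $F^{i_k^{\star}}$. Both sides are ultimately dictated by the amalgamation formulas (\ref{equ:exeps})--(\ref{equ:amaleps}) and the birationality of Theorem \ref{thm:evG}, but running the $\star$-twist and the $\zeta$-mutations in parallel for every $k$ constitutes the technical heart of the argument.
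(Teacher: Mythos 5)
A preliminary remark: this lemma is imported verbatim from the prequel (\cite[Lemma 9.10]{RB}) and the present paper contains no proof of it, so there is no in-paper argument to compare you against; I can only judge your proposal on its own terms.

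Your reduction is correct and is the natural opening. Indeed, from (\ref{equ:evhat}) one gets $\widehat{\ev}_{\mathbf i}(\mathbf x)=AM\widehat{w_0}A^{-1}$ with $A=\ev_{\mathbf{i_1}}(\mathbf x_{(1)})\in B$ and $M\in B$ (the factor $[(\ev^{\mathfrak R}_{\mathbf{i_2}}(\mathbf x_{(2)}))^{\theta}\widehat{w_0}]_{\leq 0}^{\theta}$ lies in $B$ because $\theta$ exchanges the Borel subgroups, and $\widehat{w_0}Q^{-1}\widehat{w_0}^{-1}\in B$ for $Q\in B_-$); Lemma \ref{lemma:w_0} then legitimately converts $\widehat{w_0}A^{-1}$ into $\ev_{\mathbf{i_1^{\star}}}(\mathbf x_{(1)}^{\star})^{-1}\widehat{w_0}$; and left multiplication by $B$ indeed leaves the $N_-$-factor of $n_+hn_-^{-1}$ untouched. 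So the lemma is correctly reduced to identifying the $N_-$-part of $\ev_{\mathbf{i_1^{\star}}}(\mathbf x_{(1)}^{\star})^{-1}\widehat{w_0}$, equivalently the lower-unipotent Gauss factor of $\widehat{w_0}^{-1}\ev_{\mathbf{i_1^{\star}}}(\mathbf x_{(1)}^{\star})$.

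The gap is that this remaining identification is not a technicality to be deferred: it \emph{is} the lemma, since the whole point of the statement is the explicit product formula for $\tau_{\mathbf{i_1^{\star}}}$, and your proposal never proves it. Two concrete problems. First, the inductive scheme is not actually set up: after peeling the leading letter of $\mathbf{i_1^{\star}}\in R(w_0,1)$, the tail is a reduced word for $s_{i_1^{\star}}w_0\neq w_0$, so the lemma itself cannot serve as inductive hypothesis; one must formulate an intermediate claim for each $k$ (presumably in terms of the mixed words $\mathbf i(k)$ of (\ref{equ:xiword}) and partial Gauss decompositions), and that claim is never stated. Second, the assertion that the substitutions produced by pushing Cartan factors past the $F^{i_k^{\star}}$'s ``are precisely the maps $\zeta_{\leq k-1}$'' by Lemma \ref{equ:trop} and Corollary \ref{lemma:twist} is exactly what has to be proved: those results compute the global parts $[\ev_{\mathbf i}(\mathbf x)\widehat{v^{-1}}]_{\leq 0}$ and $[\widehat{u}^{-1}\ev_{\mathbf j}(\mathbf y)]_{\geq 0}$, whereas the lemma needs a factor-by-factor statement — that the $k$-th factor of the $N_-$-part lies in the root subgroup $\widehat{w}_{\mathbf i_{>k}}^{-1}U_{-\alpha_{i_k}}\widehat{w}_{\mathbf i_{>k}}$ with argument exactly $-x^{-1}_{\zeta_{\leq k-1}\binom{i_k}{0}}$ — and passing from the global to the factorized statement is the entire computation. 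The signs and inverses here are genuinely delicate: already for $G=\PGL(2,\mathbb C)$ and $\mathbf i=11$, reading the $N_-$-factor off the paper's own matrix for $\widehat{\ev}_{11}(z_0,z_1,t)$ gives $x_{\overline 1}(\pm z_0^{-1})$, and reconciling this with $\tau_{\overline 1}(\mathbf x_{(1)}^{\star})$ requires tracking exactly how the minus signs and inversions of (\ref{equ:w_0}) interact with the argument $-x^{-1}_{\cdot}$ in the definition of $\tau$ — precisely the bookkeeping you acknowledge skipping. As it stands, the proposal is a sound reduction plus a plan, not a proof.
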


\begin{rem}The definition of $\tau_{{\mathbf{i_1^{\star}}}}$ implies that
the choice of the $\mathcal X$-split $\mathfrak{s}$ associated to the decomposition
$\mathbf i\to(\mathbf i_1,\mathbf i_2)$ in the previous lemma doesn't matter.
\end{rem}

We can now prove Proposition \ref{lemma:tresse}.

\begin{proof}Let $\mathfrak{s}$ be a $\mathcal X$-split relative to the decomposition
$\mathbf i\to(\mathbf{i_1},\mathbf{i_2})$. Lemma \ref{cor:beta} gives an expression of
$[[\widehat{\ev}_{\mathbf i}(\mathbf x)]]_-^{-1}$ as a product of terms $u_{\beta}$
where all the negative roots $\beta$ appear exactly once. Now, let us remark
that~$w_0s_{i_1^{\star}}(\alpha_{i_1^{\star}})=\alpha_{i_1}$. Applying the definition
of $\xi_j$, Lemma \ref{cor:beta} and the formula (\ref{equ:w_0}), we thus get
$$
{b_-^j}=\xi_{j}([[\widehat{\ev}_{\mathbf i}(\mathbf x)]]_-^{-1})^{-1}
=x_{\overline{j}}(-{x^{\star}_{\binom{j^{\star}}{0}}}^{-1})^{-1}
=x_{\overline{j}}({x_{\binom{j}{0}}})^{-1}
=x_{\overline{j}}(-{x_{\binom{j}{0}}})\ .
$$
Define $\mathbf{j_1}$
and $\mathbf y\in{\mathcal X}_{\mathbf{j_1}}$ such that $\mathbf{j_1}={\mathfrak L}(\mathbf{i_1})$
and $\mathbf y=\mu_{\binom{i_1}{0}}(\mathbf{x}_{(1)})$. We have
\begin{equation}\label{equ:sitrop}{\widehat{s_{j}}}^{-1}\ev_{\mathbf{j_1}}(\mathbf {y})
=x_{\overline{j}}(-y^{-1}_{\binom{j}{0}})\ \ \ev_{{\mathfrak L}(\mathbf{j_1})}
\circ\mu_{\binom {j}{0}}(\mathbf {y})\ .
\end{equation}
Indeed, let us remember the map $\varphi_j:\SL(2,\mathbb C)\hookrightarrow G$
defined in Section~\ref{section:prelimanaries}. For any nonzero $t \in \mathbb C$
and any $i\in [1,\ell]$, let us denote
$$
\begin{array}{cccccc}
x_{i} (t) =\varphi_i\left(
\begin{array}{cc}
1 & t\\
0 & 1
\end{array}
\right)
&,&
x_{\overline{i}} (t)=\varphi_i\left(
\begin{array}{cc}
1 & 0\\
t & 1
\end{array}
\right).
\end{array}
$$
An elementary matrix calculus on $\SL(2,\mathbb C)$ leads to the following
equality on $G$, satisfied for every $j\in[1,\ell]$.
$${\widehat{s_j}}^{-1}x_{\overline{j}}(t)
=x_{\overline{j}}(-t^{-1})\ \exp(\log(t){h_j})\ x_j(t^{-1})\ .
$$
Using the definition of tropical mutation, and the fact that a left tropical
mutation commutes with an amalgamation done on the right, we deduce the relation
(\ref{equ:sitrop}). Then, from (\ref{equ:sitrop}), we get the following series of
equalities
$$\begin{array}{lll}
\widehat{s_j}\ {b_-^j}\ev_{\mathbf{i_1}}(\mathbf{x}_{(1)})
&=\widehat{s_j}\ x_{\overline{j}}(-{x_{\binom{j}{0}}})\ev_{\mathbf{i_1}}(\mathbf{x}_{(1)})
=\widehat{s_j}\ x_{\overline{i_1}}(-{y_{\binom{i_1}{0}}^{-1}})
\ev_{{\mathfrak L}(\mathbf{j_1})}(\mu_{\binom j{0}}(\mathbf{y}))\\
&=\widehat{s_j}\ \widehat{s_j}^{-1}\ev_{\mathbf{j_1}}(\mathbf{y})
=\ev_{\mathbf{j_1}}(\mathbf{y})
=\ev_{{\mathfrak L}(\mathbf{i_1})}\circ{\mu_{\binom j{0}}}(\mathbf{x}_{(1)})\ .
\end{array}
$$
This relation is then extended to the  evaluation maps $\ev_{\mathbf i}^{\mathfrak L}$,
$\ev_{\mathbf i}^{\mathfrak R}$ and $\widehat{\ev}_{\mathbf i}$
by using (\ref{equ:evhat}) and the fact that a left tropical
mutation commutes with an amalgamation done on the right. Thus, the definition (\ref{equ:DCKP})
leads to the equality $T_j\circ{{\widehat\ev}_{\mathbf{i}}}=\widehat{\ev}_{\mathfrak{L}
(\mathbf {i})}\circ{\mu_{\binom j{0}}}$.
\end{proof}

\section{The case $G=\PGL(2,\mathbb C)$.}\label{section:SL2}
To fix the ideas, we consider our construction in the case $\PGL(2,\mathbb C)$.
Let us recall that the complex simple Lie group
\begin{equation}\label{equ:sl2}
SL(2, \mathbb{C})=\{\left(
\begin{array}{cc}
t_{11} & t_{12}\\
t_{21} & t_{22}
\end{array}
\right):t_{11}t_{22}-t_{12}t_{21}=1,\ \  t_{ij}\in \mathbb{C}\}\ .
\end{equation}
has its Lie algebra $\mathfrak{g}$ equal to the vector space $\sl(2,\mathbb C)$ of
$2$-squared complex matrices which have a zero trace. The Chevalley generators
$\{e_1,f_1,h_1\}$ and its related basis $\{e_1,f_1,h^1\}$ are then given by
the following matrices:
$$\begin{array}{llll}
e_1=\left(
\begin{array}{cc}
0 & 1\\
0 & 0
\end{array}
\right), &
f_1=\left(
\begin{array}{cc}
0 & 0\\
1 & 0
\end{array}
\right), &
h_1=\left(
\begin{array}{cc}
1 & 0\\
0 & -1
\end{array}
\right), &
h^1=\left(
\begin{array}{cc}
1/2 & 0\\
0 & -1/2
\end{array}
\right)
\end{array}\ .$$
Using the exponential map $\exp:\mathfrak{g}\rightarrow G$, which, in this
case, associates to a matrix $M\in\mathfrak g$ the usual matrix
$\sum_{n=0}^{\infty}\frac{M^n}{n!}\in G$,
we get the following generators of $G$, the two last ones being associated
to every non-zero complex number $x$.
$$\begin{array}{llll}
E^1=\left(
\begin{array}{cc}
1 & 1\\
0 & 1
\end{array}
\right), &
F^1=\left(
\begin{array}{cc}
1 & 0\\
1 & 1
\end{array}
\right), &
H_1(x)=\left(
\begin{array}{cc}
x & 0\\
0 & x^{-1}
\end{array}
\right),&
H^1(x)=\left(
\begin{array}{cc}
x^{1/2} & 0\\
0 & x^{-1/2}
\end{array}
\right)
\end{array}\ .$$
Let us notice that $H^1(x)$ is well-defined on $\PGL(2,\mathbb C)$, because of the identity
$$H^1(x)=\left(
\begin{array}{cc}
x^{1/2} & 0\\
0 & x^{-1/2}
\end{array}
\right)\stackrel{\PGL(2,\mathbb C)}{=}
\left(
\begin{array}{cc}
x & 0\\
0 & 1
\end{array}
\right)\ .$$
Now, because there is only one simple root $\alpha_1$, the Weyl group $W$ contains only two elements
$\{1,s_1\}$ and the different double reduced words are the double words $\mathbf 1$, $1$,
$\overline{1}$, $1\overline{1}$, $\overline{1}1$, where $\mathbf 1$ is the unity of the direct product
$W\times W$. Finally, the $r$-matrix $r\in\mathfrak g\wedge\mathfrak g$ associated
to $\sl(2,\mathbb C)$ and its related elements $r_{\pm}\in\mathfrak g\otimes\mathfrak g$
are given by the following formulas.
\begin{equation}\label{equ:elemrmatrix}
\begin{array}{cccc}
r=e_1\wedge f_1,&r_+=\displaystyle\frac{1}{4}h_1\otimes h_1+e_1\otimes f_1&\mbox{and}&
r_-=-\displaystyle\frac{1}{4}h_1\otimes h_1-f_1\otimes e_1\ .
\end{array}
\end{equation}
For every $i,j\in\{1,2\}$, let $t_{ij}$ be the coordinate function on the matrices
(\ref{equ:sl2}). Applying the formula (\ref{equ:elemrmatrix}) on the
Semenov-Tian-Shansky Poisson bracket given by Proposition \ref{prop:STSPoisson},
it is easy to prove that in the matricial case, the Poisson bracket on $(G,\pi_*)$ is
given by the following equalities:
$$\left\{\begin{array}{lll}
\{t_{11},t_{12}\}_*=t_{12}t_{22}, & \{t_{11},t_{21}\}_*=-t_{21}t_{22}\ , \\
\{t_{11},t_{22}\}_*=0, & \{t_{12},t_{21}\}_*=t_{11}t_{22}-t_{22}^2\ , \\
\{t_{12},t_{22}\}_*=t_{12}t_{22}, & \{t_{21},t_{22}\}_*=-t_{21}t_{22}\ .
\end{array}
\right.$$
Let us then consider the related evaluation maps. It is
easy to check that the evaluation~$\widehat{\ev}_1:{\mathcal X}_{[1]_{\mathfrak R}}
\to (G,\pi_*)$ is Poisson. It is indeed given by the following expression:

$$\begin{array}{rl}
\widehat{\ev}_1(x_0,t)
=\left(\begin{array}{cc}
t^{1/2}+t^{-1/2} & -x_0t^{-1/2}\\
x_0^{-1}t^{1/2} & 0
\end{array}\right) .
\end{array}$$
The evaluations $\widehat{\ev}_{\overline 11}:{\mathcal X}_{[\overline 11]_{\mathfrak R}}\to BB_-$
and $\widehat{\ev}_{{1}\overline 1}:{\mathcal X}_{[1\overline 1]_{\mathfrak R}}\to BB_-$,
parameterizing the subvariety~$BB_-$, are then obtained by the following formulas.

$$\begin{array}{rl}
\widehat{\ev}_{\overline{1}1}(y_0,y_1,t)=\left(\begin{array}{cc}
t^{-1/2}(1+y_1)+t^{1/2} & -y_0y_1t^{-1/2}\\
y_0^{-1}(t^{1/2}(1+y_1^{-1})+t^{-1/2}(1+y_1)) & -y_1t^{-1/2}
\end{array}\right)\ .\\
\\
\widehat{\ev}_{1\overline{1}}(\widetilde{y_0},\widetilde{y_1},t)=\left(\begin{array}{cc}
t^{-1/2}(1+\widetilde{y_1}^{-1})+t^{1/2} & -t^{-1/2}\widetilde{y_0}(1+\widetilde{y_1}^{-1})\\
\widetilde{y_0}^{-1}(t^{1/2}+t^{-1/2}\widetilde{y_1}^{-1}) & -\widetilde{y_1}^{-1}t^{1/2}
\end{array}\right).
\end{array}$$
And it is straightforward to check  that $\mu_{[\overline 1{1}]_{\mathfrak{R}}
\to[{1}\overline 1]_{\mathfrak{R}}}:(y_0,y_1,t)\mapsto(\widetilde{y_0},
\widetilde{y_1},t)$.
The remaining twisted
evaluations $\widehat{\ev}_{11},\widehat{\ev}_{1\overline 1}:
{\mathcal X}_{[11]_{\mathfrak R}}\to BB_-$ and
$\widehat{\ev}_{\overline{1}\overline 1}:
{\mathcal X}_{[\overline 11]_{\mathfrak R}}\to BB_-$
are given by:

$$\begin{array}{ll}
\widehat{\ev}_{11}(z_0,z_1,t)=\widehat{\ev}_{1\overline 1}(z_0,z_1,t)
=\left(\begin{array}{cc}
(1+z_1^{-1})t^{1/2}+t^{-1/2} & -z_0((1+z_1^{-1})t^{1/2}+(1+z_1)t^{-1/2})\\
z_0^{-1}z_1^{-1}t^{1/2} & -z_1^{-1}t^{1/2}
\end{array}\right)\ .\\
\\
\widehat{\ev}_{\overline{1}\overline 1}(y_0,y_1,t)=\widehat{\ev}_{\overline{1} 1}(y_0,y_1,t)
=\left(\begin{array}{cc}
t^{-1/2}(1+y_1)+t^{1/2} & -t^{-1/2}y_0y_1\\
y_0^{-1}(t^{1/2}(1+y_1^{-1})+t^{-1/2}(1+y_1)) & -y_1t^{-1/2}
\end{array}\right)\ .
\end{array}$$
It is easy to check that all these maps are Poisson when the
matrices establishing the Poisson structure
on the related seed ${\mathcal X}$-tori are given by
$$
\begin{array}{ccc}
\eta({1{1}})=\eta({1\overline{1}})=\left(
\begin{array}{ccc}
0 & -1&0\\
1 & 0&0\\
0&0&0
\end{array}
\right),&
\eta({\overline 1 1})=\eta({\overline 1 \overline 1})=\left(
\begin{array}{ccc}
0 & 1&0\\
-1 & 0&0\\
0&0&0
\end{array}
\right).
\end{array}
$$
We thus get two
cluster $\mathcal X$-varieties for the variety $BB_-$, denoted ${\mathcal X}_{e}$
and ${\mathcal X}_{w_0}$, and respectively associated to the cluster variables
$(y_0,y_1,t)$ and $(z_0,z_1,t)$. They are linked in the following way: if the
elements $\widehat{\ev}_{\overline{1}1}(y_0,y_1,t)$ and
$\widehat{\ev}_{11}(z_0,z_1,t)$  are equal, we quickly check
with the expressions above that the map $\varphi:(y_0,y_1,t)
\mapsto (z_0,z_1,t)$ is given by
$$
\left\{\begin{array}{rcl}
z_0&=&y_0{(1+y_1^{-1})}^{-1}{(1+y_1^{-1}t)}^{-1}\\
z_1&=&ty_1^{-1}
\end{array}\right. .
$$
In fact, we have the equality $\varphi=\Xi_{s_1}$,
coming from the following formula for~$\Xi_{s_1}$.
$$
\begin{array}{rll}
\Xi_{s_1}(y_0,y_1,t)&=\mu_{[{1}\overline 1]_{\mathfrak R}\to
[\overline 1{1}]_{\mathfrak R}}\circ\Xi_1\circ\mu_{[\overline{1}1]_{\mathfrak R}
\to [1\overline{1}]_{\mathfrak R}}(y_0,y_1,t)\\
\\
&=(y_0{(1+y_1^{-1})}^{-1}{(1+y_1^{-1}t)}^{-1},y_1^{-1}t,t)\ .
\end{array}
$$
Finally, we use tropical mutations to describe the De-Concini-Kac-Procesi
Artin group action (\ref{equ:defti}) on $(\PGL(2,\mathbb C),\pi_*)$. The cluster
combinatorics is given by the following Poisson automorphism on
the seed $\mathcal X$-torus ${\mathcal X}_{[1 1]_{\mathfrak R}}$.
$$\begin{array}{ccll}
{\mathcal T}_1({1 1}):&{\mathcal X}_{[1 1]_{\mathfrak R}}\longrightarrow
{\mathcal X}_{[1 1]_{\mathfrak R}}:&(z_0,z_1,t)\longmapsto\Xi_{s_1}\circ\mu_{\binom{1}{0}}(z_0,z_1,t)
\end{array}$$
$$\begin{array}{ccll}
\mbox{satisfies}&
{\mathcal T}_1({1 1})(z_0,z_1,t)=(z_0^{-1}{(1+z_1^{-1})}^{-1}{(1+z_1^{-1}t)}^{-1},z_1^{-1}t,t)\ .
\end{array}
$$
Let us stress, however, that this birational Poisson isomorphism is not an involution; indeed,
a straightforward computation gives the equality
${\mathcal T}_1({11})^2(z_0,z_1,t)=(z_0z_1^{-2}t^2,z_1,t)$.
Therefore, the action of the center ${\mathcal Z}({\mathcal B}_{\mathfrak g})$ of
${\mathcal B}_{\mathfrak g}$ on $(BB_-,\pi_*)$
given by the De Concini-Kac-Procesi automorphism is not trivial.

\end{document}